\newtheorem{thm}{Theorem}[section]
\newtheorem{cor}[thm]{Corollary}
\newtheorem{lem}[thm]{Lemma}
\newtheorem{prop}[thm]{Proposition}
\newtheorem{obs}[thm]{Observation}
\newtheorem*{thm*}{Theorem}
\newtheorem*{conj*}{Conjecture}
\newtheorem*{lem*}{Lemma}
\newtheorem*{obs*}{Observation}
\theoremstyle{definition}
\theoremstyle{remark}
\newtheorem*{claim}{Claim}
\newcommand{\Nn}{\mathcal N}
\newcommand{\Cc}{\mathcal C}
\newcommand{\Ee}{\mathcal E}
\newcommand{\Dd}{\mathcal D}
\newcommand{\Bb}{\mathcal B}
\newcommand{\Mm}{\mathcal M}
\newcommand{\Uu}{\mathcal U}
\newcommand{\eq}{{=}}
\newcommand{\iso}{\cong}
\newcommand{\ie}{\textit{i.e.}}
\newcommand{\Ie}{\textit{I.e.}}
\newcommand{\iiff}{if and only if }
\newcommand{\wolog}{without loss of generality}
\DeclareMathOperator{\cl}{cl}  
\DeclareMathOperator{\co}{co}
\newcommand{\onesum}{\oplus} 
\newcommand{\twosum}{\oplus_2}
\newcommand{\twosume}[2]{\prescript{#1}{}{\oplus}_2^{#2} \:}
\newcommand{\gltwosum}[1]{\overset{#1}{\oplus_2} U_{2,4}}
\newcommand{\br}[1]{\left( #1 \right)} 
\newcommand{\dcup}{\mathbin{\dot\cup}}
\newcommand{\Om}{\Omega}
\newcommand{\GB}{\ensuremath{(G,\Bb)}}
\newcommand{\GBp}{(G',\Bb')}
\title{On excluded minors of connectivity 2 for the class of frame matroids}
\author{
   Matt DeVos\thanks{Department of Mathematics, Simon Fraser University, 8888 University Drive, Burnaby, BC, Canada. Email: mdevos@sfu.ca.
     Supported in part by an NSERC Discovery Grant (Canada).}
\and
   Daryl Funk\thanks{School of Mathematics and Statistics, Victoria University of Wellington, Wellington, New Zealand. Email: daryl.funk@vuw.ac.nz.}
\and
   Irene Pivotto\thanks{School of Mathematics and Statistics, University of Western Australia, 35 Stirling Highway, Crawley, WA, Australia. Email: irene.pivotto@uwa.edu.au.}
}
\date{} 
\begin{document}

\maketitle

\begin{abstract} 
We investigate the set of excluded minors of connectivity 2 for the class of frame matroids.  
We exhibit a list $\Ee$ of 18 such matroids, and show that if $N$ is such an excluded minor, then either $N \in \Ee$ or $N$ is a 2-sum of $U_{2,4}$ and a 3-connected non-binary frame matroid.  
\end{abstract}

\noindent \textbf{Keywords:} biased graphs, frame matroids, excluded minors.

\noindent \textbf{MSC:} 05C22, 05B35.

\section{Introduction}

A matroid is \emph{frame} if it may be extended so that it contains a basis $B$ (its \emph{frame}) such that every element is spanned by two elements of $B$.  
Frame matroids are a natural generalisation of graphic matroids.  
Indeed, the cycle matroid $M(G)$ of a graph $G \eq (V,E)$ is naturally extended by adding $V$ as a basis, and declaring each non-loop edge to be minimally spanned by its endpoints.  
Zaslavski \cite{MR1273951} has shown that the class of frame matroids is precisely that of matroids arising from \emph{biased graphs} (whence these have also been called \emph{bias} matroids), as follows.  
A \emph{biased graph} $\Omega$ consists of a pair $(G, \mathcal{B})$, where $G$ is a graph and $\mathcal{B}$ is a collection of cycles of $G$, called \emph{balanced}, such that no theta subgraph contains exactly two balanced cycles; a \emph{theta} graph consists of a pair of distinct vertices and three internally disjoint paths between them.  
We say such a collection $\Bb$ satisfies the \emph{theta property}. 
The membership or non-membership of a cycle in $\Bb$ is its \emph{bias}; cycles not in $\mathcal{B}$ are \emph{unbalanced}.  

\label{pagerefforbiaedgraphconstruction}
Let $M$ be a frame matroid on ground set $E$, with frame $B$.  
By adding elements in parallel if necessary, we may assume $B \cap E = \emptyset$.  
Hence for some matroid $N$, $M = N \setminus B$ where $B$ is a basis for $N$ and every element $e \in E$ is spanned by a pair of elements in $B$.  
Let $G$ be the graph with vertex set $B$ and edge set $E$, in which $e$ is a loop with endpoint $f$ if $e$ is in parallel with $f \in B$, and otherwise $e$ is an edge with endpoints $f, f' \in B$ if $e \in \cl\{f,f'\}$.  
Setting $\Bb = \{ C \mid C$ is a cycle for which $E(C)$ is a circuit of $M \}$ yields a biased graph $\GB$, and the circuits of $M$ are precisely those sets of edges inducing one of: 
(1)  a balanced cycle, 
(2)  two edge-disjoint unbalanced cycles intersecting in just one vertex, 
(3)  two vertex-disjoint unbalanced cycles along with a path connecting them, or 
(4)  a theta subgraph in which all three cycles are unbalanced \cite{MR1273951}.  
We call a subgraph as in (2) or (3) a \emph{pair of handcuffs}, \emph{tight} or \emph{loose}, respectively.  
We say such a biased graph $\GB$ \emph{represents} the frame matroid $M$, and write $M = F\GB$.  

Observe that for a biased graph $\GB$, if $\mathcal{B}$ contains all cycles in $G$, then $F(G, \mathcal{B})$ is the cycle matroid $M(G)$ of $G$.  
We therefore view a graph as a biased graph with all cycles balanced.  
At the other extreme, when no cycles are balanced $F(G, \emptyset)$ is the bicircular matroid of $G$, introduced by Sim\~{o}es-Pereira \cite{MR0317973} and further investigated by Matthews \cite{MR0505702}, Wagner \cite{MR815399}, and others 
(for instance, \cite{MR3100270, MR1892972}).  
Frame matroids also include Dowling geometries \cite{MR0307951} (see also \cite{MR2017726}).  

A class of matroids is \emph{minor-closed} if every minor of a matroid in the class is also in the class.  
For any minor-closed family, there is a set of \emph{excluded minors} consisting of those matroids not in the family all of whose proper minors are in the family.  
The class of frame matroids is minor-closed.  
Little is known about excluded minors for the class of frame matroids; 
Zaslavski has exhibited several in \cite{MR1273951}.  
The class of bicircular matroids is minor-closed; DeVos, Goddyn, Mayhew, and Royle \cite{Matt_Luis_exminorsforbicircular} have shown that an excluded minor for the class of bicircular matroids has less than 16 elements, 
and thus that the set of excluded minors for this class is finite.  
Perhaps, like graphic and bicircular matroids, the larger class of frame matroids may also be characterised by a finite list of excluded minors.  
On the other hand, as we have shown elsewhere \cite{MR3267062}, there are natural minor-closed families of frame matroids whose sets of excluded minors are infinite.  
Perhaps, like the class of matroids representable over the reals, the set of excluded minors for frame matroids is infinite.  
In this paper, we begin by seeking to determine those excluded minors for the class of frame matroids that are not 3-connected.  
We come close, determining a set $\Ee$ of 18 particular excluded minors for the class, and show that any other excluded minor of connectivity 2 for the class has a special form.  
We prove: 

\begin{thm} \label{thm:two_sep_ex_min_main} 
Let $M$ be an excluded minor for the class of frame matroids, and suppose $M$ is not 3-connected.  
Then either $M$ is isomorphic to a matroid in $\Ee$ or $M$ is the 2-sum of a 3-connected non-binary frame matroid and $U_{2,4}$.  
\end{thm}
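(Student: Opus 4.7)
The plan is to combine the standard matroid 2-sum decomposition with an analysis of when a 2-sum of two frame matroids is itself frame, and then to enumerate the obstructions.

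Since $M$ is not 3-connected, the Cunningham--Edmonds--Seymour decomposition gives $M = M_1 \oplus_2 M_2$ along a basepoint $p$, with each $|E(M_i)| \geq 3$. Because $M$ is an excluded minor, each of $M_i \setminus p$ and $M_i / p$ is a proper minor of $M$ and hence frame; a short argument then shows that $M_1$ and $M_2$ themselves are frame. Fix biased-graph representations $(G_i, \Bb_i)$ of $M_i$ in which $p$ appears as an edge $e_i$. The next step is a \emph{gluing lemma}: $M_1 \oplus_2 M_2$ is frame exactly when the representations can be chosen so that $e_1$ and $e_2$ play compatible local roles—for instance, both balanced links, or one balanced link and one unbalanced loop at a balancing vertex, together with the other viable pairings. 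The proof uses the four circuit types (1)--(4) recalled in the introduction: every circuit of the 2-sum that uses $p$ on both sides must glue to one of these four types, and the theta property at the shared vertex forces precisely the stated local compatibility conditions.

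With the gluing lemma in hand, the argument proceeds by case analysis over the possible pairs of local roles for $(e_1, e_2)$, taken across all biased-graph representations of $M_1$ and $M_2$. The central sub-claim is that if neither $M_i$ is $U_{2,4}$, then by Zaslavsky's description of the inequivalent biased-graph representations of a frame matroid, each representation is essentially rigid near $e_i$: one cannot shift $e_i$ between local types without changing the matroid $M_i$. Under this rigidity the obstruction localises, bounding the size of each $M_i$, and a finite direct enumeration produces the eighteen matroids of $\Ee$. Otherwise some $M_i$, say $M_1$, is $U_{2,4}$. Minimality of $M$ forces $M_2$ to be 3-connected, since a 2-separation inside $M_2$ would yield a 2-separation of $M$ on strictly fewer elements. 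Furthermore, the 2-sum of $U_{2,4}$ with any binary frame matroid is itself frame, so $M_2$ must be non-binary, which delivers the second conclusion of the theorem.

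The main obstacle will be the rigidity and case-analysis step. A single frame matroid can have many inequivalent biased-graph representations, and $p$ may occupy rather different local positions in each; ruling out every ``mixed'' pairing to arrive at precisely the 18 obstructions of $\Ee$ is the technical heart of the argument. Handling it will rely on combining Zaslavsky's structural results on biased-graph representations with the minimality of $M$, the latter letting us assume that neither $G_i$ itself admits a 1- or 2-separation that could be exploited to decompose the problem further.
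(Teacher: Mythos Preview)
Your overall shape is right---decompose as a 2-sum, characterise when such a 2-sum is frame, and then enumerate---but there is a concrete error and a substantive structural gap.

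\textbf{The concrete error.} You assert that ``the 2-sum of $U_{2,4}$ with any binary frame matroid is itself frame''. This is false: $M^*(K_5)$ and $M^*(K_{3,3})$ are binary (indeed regular) and frame, yet $U_{2,4} \oplus_2 M^*(K_5)$ and $U_{2,4} \oplus_2 M^*(K_{3,3})$ are excluded minors in $\Ee_0$. The correct gluing criterion (the paper's Theorem~\ref{thm:biased2sum}) is not about binaryness at all: a 2-sum $M_1 \twosume{e_1}{e_2} M_2$ of connected frame matroids is frame if and only if one side is \emph{graphic}, or each $M_i$ admits a biased-graph representation in which $e_i$ is an unbalanced loop. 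Your informal list of ``compatible local roles'' does not reduce to this, and your dichotomy on whether a side equals $U_{2,4}$ does not separate the $\Ee_0$ cases from the rest.

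\textbf{The structural gap.} Your plan to bound the non-$U_{2,4}$ case by invoking ``Zaslavsky's description of the inequivalent biased-graph representations'' and a resulting local rigidity near $e_i$ is not a known theorem you can cite, and rigidity in that sense does not hold in general (a frame matroid can have many representations with $e_i$ in quite different positions). The paper avoids this entirely. It first uses Seymour's 1-rounded family $\{U_{2,4}, F_7, F_7^*, M^*(K_5), M^*(K_{3,3}), M^*(K_{3,3}')\}$ to show that if $M \notin \Ee_0$ then \emph{both} summands are non-binary, and then that one summand must actually be $U_{2,4}$; a short submodularity argument shows the resulting 3-element sides are pairwise disjoint, so $M = N \gltwosum{L}$ with $N$ 3-connected. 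It then recasts the problem via \emph{matroidals}: $(N,L)$ is frame if and only if $N$ has a biased-graph representation with every element of $L$ an unbalanced loop, and $N \gltwosum{L}$ is an excluded minor for frame matroids if and only if $(N,L)$ is an excluded minor for frame matroidals. The enumeration that produces $\Ee_1$ is then a long but structured case analysis (the configurations $\Cc_1,\ldots,\Cc_8$, $\Dd_1,\ldots,\Dd_3$ and a twisted-flip operation) inside a single biased graph for $N$, not a comparison across all representations of both $M_1$ and $M_2$. Your proposal does not supply a substitute for this machinery, and your own final paragraph correctly identifies this as the part you do not yet have.
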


The remainder of this paper is organised as follows.  
We first discuss some of the key concepts we need for our investigation.  
In Section \ref{sec:2-sums of frame matroids} we discuss 2-sums of frame matroids and of biased graphs, and provide a characterisation of when a 2-sum of two frame matroids is frame.  
This is enough for us to determine the first nine excluded minors on our list, and to drastically narrow our search for more.  
These tasks are accomplished in Section \ref{sec:Excluded_minors}.  
In particular, we investigate some key properties any excluded minor not yet on our list must have.  
In Section \ref{sec:Main_theorem} we complete the proof of Theorem \ref{thm:two_sep_ex_min_main}, determining the remaining excluded minors in our list.   

Theorem \ref{thm:two_sep_ex_min_main} give a strong structural description of excluded minors that are not 3-connected.  
However, the investigation remains incomplete --- the final case remaining is to determine those excluded minors of the form captured in the second part of the statement of Theorem \ref{thm:two_sep_ex_min_main}.  
We anticipate that the analysis required to complete this final case will be at least as long and technical as that required here, and that the result will be at least a doubling of the number of excluded minors on our list, but that the list will remain finite.  

We close this preliminary section by noting that in the course of proving Theorem \ref{thm:two_sep_ex_min_main}, we discover an operation analogous to a Whitney twist in a graph, which we call a \emph{twisted flip}.  
Just as a Whitney twist of a graph $G$ produces a (generally) non-isomorphic graph whose cycle matroid is isomorphic to the cycle matroid of $G$, a twisted flip of a biased graph $\GB$ produces a (generally) non-isomorphic biased graph $\GBp$ with $F\GBp \iso F\GB$.  
This operation is described toward the end of Section \ref{sec:Preliminaries}.

\subsection{Standard notions: biased graphs, minors, connectivity} 
\label{sec:standardnotionsbiasedgraphsminorsconnectivity} 

For a frame matroid $M$ represented by a biased graph $\Om = \GB$, we denote throughout by $E = E(M) = E(G)$ the common ground set of $M$ and edge set of $G$.  
When it is important to distinguish an edge which is not a loop from one that is, we refer to an edge having distinct endpoints as a \emph{link}.  
There are minor operations we may perform on $\GB$ that correspond to minor operations in $M$, as follows \cite{MR1088626}. 
For an element $e \in E$, \emph{delete} $e$ from $\GB$ by deleting $e$ from $G$ and removing from $\Bb$ every cycle containing $e$.  
To \emph{contract} $e$, there are three cases: 
If $e$ is a balanced loop, $\GB /e = \GB \setminus e$.  
If $e$ is a link, contract $e$ in $G$ and declare a cycle $C$ to be balanced if either $C \in \Bb$ or $E(C) \cup \{e\}$ forms a cycle in $\Bb$.  
If $e$ is an unbalanced loop with endpoint $u$, then $\GB /e$ 
is the biased graph obtained from \GB{} as follows: $e$ is deleted, all other loops incident to $u$ become balanced, and links incident to $u$ become unbalanced loops incident to their other endpoint.  
A \emph{minor} of $\GB$ is any biased graph obtained by a sequence of deletions and contractions.  
It is readily checked that these minor operations on biased graphs preserve the theta property, and that they agree with matroid minor operations on their frame matroids; that is, for any element $e \in E$, $F\GB \setminus e = F((G, \Bb) \setminus e)$ and $F\GB /e = F((G, \Bb) /e)$ (this shows that the class of frame matroids is minor closed).  

For a biased graph $\Omega \eq (G, \mathcal{B})$ we say $G$ is the \emph{underlying graph} of $\Omega$.  
We write $\Om[X]$ or $G[X]$ to denote the biased subgraph of $\GB$ induced by the edges in $X$ that has balanced cycles just those cycles in $\Bb$ whose edge set is contained in $X$.  
If $G[X]$ contains no unbalanced cycle, it is \emph{balanced}; otherwise it is \emph{unbalanced}.  
If $G[X]$ contains no balanced cycle, it is \emph{contrabalanced}.  
We denote by $V(X)$ the set of vertices incident with an edge in $X$, and by $b(X)$ the number of balanced components of $G[X]$.  
It follows from the definitions that for a frame matroid $M$ represented by biased graph $\GB$, the rank of $X$ in $M$ is $r(X) = |V(X)| - b(X)$.

A \emph{separation} of a graph $G{=}(V,E)$ is a pair of edge disjoint subgraphs $G_1, G_2$ of $G$ with $G = G_1 \cup G_2$.  
The \emph{order} of a separation is $|V(G_1) \cap V(G_2)|$.  
A separation of order $k$ is a \emph{$k$-separation}.  
If both $V(G_1) \setminus V(G_2)$ and $V(G_2) \setminus V(G_1)$ are non-empty, then the separation is \emph{proper}.  
If $G$ has no proper separation of order less than $k$, then $G$ is \emph{$k$-connected}.  
The least integer $k$ for which $G$ has a proper $k$-separation is the \emph{connectivity} of $G$.  
A partition $(X,Y)$ of $E$ naturally induces a separation $G[X], G[Y]$ of $G$, which we also denote $(X,Y)$.  
We call $X$ and $Y$ the \emph{sides} of the separation.  
The \emph{connectivity function} of $G$ is the function $\lambda_G$ that to each partition $(X,Y)$ of $E$ assigns the order of its corresponding separation; that is, $\lambda_G(X,Y) = |V(X) \cap V(Y)|$.  

A \emph{$k$-separation} of a biased graph $\Om{=}\GB$ is a $k$-separation of its underlying graph $G$, and the \emph{connectivity} of $\Om$ is that of $G$.  
The \emph{connectivity function} $\lambda_\Om$ of $\Om$ is that of $G$.  

A \emph{separation} of a matroid $M$ is a partition of its ground set $E$ into two subsets $X$, $Y$; it is also denoted $(X,Y)$, with $X$ and $Y$ the \emph{sides} of the separation.  
The \emph{order} of a separation $(X,Y)$ of a matroid is $r(X) + r(Y) - r(E) + 1$.  
A separation of order $k$ with both $|X|, |Y| \geq k$ is a \emph{$k$-separation}.  
If $M$ has no $l$-separation with $l<k$, then $M$ is \emph{$k$-connected.}  
The \emph{connectivity} of $M$ is the least integer $k$ such that $M$ has a $k$-separation, provided one exists (otherwise we say the connectivity of $M$ is infinite).  
Evidently, $M$ is connected \iiff $M$ has no 1-separation.  
The \emph{connectivity function} of a matroid $M$ on ground set $E$ is the function $\lambda_M$ that assigns to each separation $(X,Y)$ of $E$ its order; that is, $\lambda_M(X,Y) = r(X) + r(Y) - r(M) + 1$.  

Let $M$ be a frame matroid represented by a biased graph $\Om$.  
The following facts regarding the relationship between the order of a separation $(X,Y)$ in $M$ and the order of $(X,Y)$ in $\Om$ will be used extensively throughout.  
In general, a separation has different orders in $\Om$ and $F(\Om)$.  
However, if the sides of a separation are connected in the graph then this difference is at most one.  
To see this, let $(X,Y)$ be a partition of $E$.  
The order of $(X,Y)$ in $M$ is 
\begin{equation}  \label{eqn:matroid_graph_conn}
\begin{aligned} 
\lambda_M(X,Y) &= r(X) + r(Y) - r(M) + 1 \\ &= |V(X)| - b(X) + |V(Y)| - b(Y) - (|V| - b(E)) + 1 \\
&= |V(X) \cap V(Y)| - b(X) - b(Y) + b(E) + 1 \\ &= \lambda_\Om(X,Y) - b(X) - b(Y) + b(E) + 1. 
\end{aligned}  
\end{equation}
Suppose both $\Om[X]$ and $\Om[Y]$ connected.  
If $\Om$ is balanced, we have $\lambda_M(X,Y) = \lambda_\Omega(X,Y)$.  
If $\Om$ is unbalanced, we have 
\begin{enumerate} 
\item  if both $\Om[X]$ and $\Om[Y]$ are unbalanced, $\lambda_M(X,Y) = \lambda_\Om(X,Y)+1$,
\item  if one of $\Om[X]$ or $\Om[Y]$ is balanced while the other is unbalanced, then $\lambda_M(X,Y) = \lambda_\Om(X,Y)$, and 
\item  if both $\Om[X]$ and $\Om[Y]$ are balanced, then 
$\lambda_M(X,Y) = \lambda_\Om(X,Y)-1$.
\end{enumerate} 

Moreover, (\ref{eqn:matroid_graph_conn}) immediately implies that if $M$ is connected, then $\Om$ must be connected: if there is a partition $(X,Y)$ of $E$ with $\lambda_\Om(X,Y)=0$, then $\lambda_M(X,Y) = 1$.   
The converse need not hold: a frame matroid represented by a connected biased graph may be disconnected.  
Indeed, let $M{=}F(\Om)$, where $\Om$ is connected, and suppose $(X,Y)$ is a 1-separation of $M$, with both $\Om[X]$ and $\Om[Y]$ connected.  
If $\Om$ is balanced, then $\lambda_M(X,Y)=\lambda_\Om(X,Y)=1$.  
Otherwise, by (\ref{eqn:matroid_graph_conn}) one of the following holds: 
\begin{itemize} 
\item  $\lambda_\Om(X,Y)=1$, and precisely one of $\Om[X]$ or $\Om[Y]$ is balanced; 
\item  $\lambda_\Om(X,Y)=2$, and each of $\Om[X]$ and $\Om[Y]$ are balanced.    
\end{itemize}

Throughout, matroids and biased graphs are finite; graphs may have loops and parallel edges.  
We often make no distinction between a subset of elements $A$ of a matroid $M = F\GB$, the subset of edges of $G$ representing an edge in $A$, and the biased subgraph $G[A]$ induced by $A$.

\subsection{Excluded minors are connected, simple, and cosimple} 

Having established the standard vocabulary of biased graphs and connectivity, we may immediately make the observations that an excluded minor is connected, simple, and cosimple.  

\begin{obs} \label{obs:an_ex_min_is_connected}
If $M$ is an excluded minor for the class of frame matroids, then $M$ is connected.  
\end{obs}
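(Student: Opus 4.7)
The plan is to argue by contradiction, showing that the class of frame matroids is closed under direct sum, so a disconnected matroid cannot be an excluded minor. Suppose for contradiction that $M$ is an excluded minor which is not connected. Then $M$ admits a 1-separation, that is, a partition $(X,Y)$ of $E(M)$ with both $X,Y$ non-empty and $r(X)+r(Y)=r(M)$. Writing $M_1 = M|X$ and $M_2 = M|Y$, we have $M = M_1 \oplus M_2$, and since both $X$ and $Y$ are non-empty proper subsets of $E(M)$, both $M_1 = M \setminus Y$ and $M_2 = M \setminus X$ are proper minors of $M$.

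Because $M$ is an excluded minor, both $M_1$ and $M_2$ are frame matroids. Let $\Om_i = (G_i, \Bb_i)$ be a biased graph with $F(\Om_i) = M_i$ for $i=1,2$, chosen so that $G_1$ and $G_2$ have disjoint vertex sets. Form the biased graph $\Om = (G, \Bb)$, where $G$ is the disjoint union of $G_1$ and $G_2$ and $\Bb = \Bb_1 \cup \Bb_2$. Since every cycle of $G$, and in particular every theta subgraph, is entirely contained in either $G_1$ or $G_2$, the collection $\Bb$ inherits the theta property from $\Bb_1$ and $\Bb_2$, so $\Om$ is indeed a biased graph.

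It remains to check that $F(\Om) = M_1 \oplus M_2$. This is immediate from the characterisation of the circuits of $F\GB$ recalled in the introduction: each of the configurations (1)--(4) (balanced cycle, tight or loose handcuffs, or unbalanced theta) is connected in $G$, and so lies entirely within $G_1$ or entirely within $G_2$. Therefore the circuits of $F(\Om)$ are precisely the circuits of $F(\Om_1)$ together with those of $F(\Om_2)$, giving $F(\Om) = F(\Om_1) \oplus F(\Om_2) = M_1 \oplus M_2 = M$. Hence $M$ is a frame matroid, contradicting the assumption that $M$ is an excluded minor.

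There is no real obstacle here; the only point worth attention is the verification that the disjoint union of two biased graphs is again a biased graph whose frame matroid is the direct sum of the two frame matroids, which reduces to noting that the circuit configurations (1)--(4) are connected subgraphs.
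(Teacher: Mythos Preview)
Your proof is correct and follows essentially the same approach as the paper's: argue by contradiction, express $M$ as a direct sum of two proper minors, represent each by a biased graph, and take the disjoint union. You provide more detail than the paper does (verifying the theta property for the disjoint union and checking that circuits of types (1)--(4) are connected), whereas the paper simply asserts beforehand that the disjoint union of biased graphs represents the direct sum of their frame matroids.
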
 

We denote the direct sum of two matroids $M$ and $N$ by $M \oplus N$.  
Evidently, if $\Omega$ and $\Psi$ are biased graphs, then the disjoint union $\Om \dcup \Psi$ of $\Om$ and $\Psi$ represents $F(\Om) \oplus F(\Psi)$.  
We denote the restriction of a matroid $M$ to a subset $A \subseteq E(M)$ by $M|A$.  
If $M = F(\Om)$, then clearly $\Om[A]$ is a biased graph representing $M|A$.

\begin{proof}[Proof of Observation \ref{obs:an_ex_min_is_connected}]
Suppose to the contrary that $M$ is an excluded minor, and that $M$ has a 1-separation $(A,B)$.  
Then $M$ is the direct sum of its restrictions to each of $A$ and $B$.  
By minimality, each of $M|A$ and $M|B$ are frame.  
Let $\Omega$ and $\Psi$ be biased graphs representing $M|A$ and $M|B$ respectively, and let $\Omega \mathbin{\dot{\cup}} \Psi$ denote the biased graph which is the disjoint union of $\Omega$ and $\Psi$.  
Then $M = M|A \onesum M|B = F(\Omega) \onesum F(\Psi) = F(\Omega \mathbin{\dot\cup} \Psi)$, so $M$ is frame, a contradiction.  
\end{proof} 

\begin{obs} \label{obs:ex_min_is_simple_and_cosimple}
Let $M$ be an excluded minor for the class of frame matroids.  
Then $M$ is simple and cosimple.  
\end{obs}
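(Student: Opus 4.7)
The plan is to rule out, in turn, each of the four forbidden structures---a loop, a coloop, a parallel pair, a series pair---by the same extension strategy: assume the bad structure exists, use minimality to represent an appropriate single-element minor as a biased graph, and extend that representation to a biased graph for $M$, contradicting $M$ being an excluded minor. Loops and coloops are immediate from Observation~\ref{obs:an_ex_min_is_connected}: each would be a rank-$0$ or rank-$r(M)$ direct summand of $M$, yielding a $1$-separation, unless $|E(M)|=1$ (which is trivial).

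For a parallel pair $\{e,f\}$, I would take $\Omega$ with $F(\Omega)=M\setminus e$. Since $f$ is not a loop of $M$, it appears in $\Omega$ as a link $uv$ or as an unbalanced loop at some vertex $v$. In the first case, add $e$ as a new link parallel to $f$, declare the digon $\{e,f\}$ balanced, and for each $u$-$v$ path $P$ in $\Omega$ avoiding $f$ declare $\{e\}\cup P$ to have the same bias as $\{f\}\cup P$. In the second, add $e$ as a second unbalanced loop at $v$. A short local case check yields the theta property for the resulting biased graph $\Omega'$, and $F(\Omega')\setminus e = F(\Omega)=M\setminus e$ together with $\{e,f\}$ being a circuit of $F(\Omega')$ (a balanced digon or a tight handcuff) forces $F(\Omega')=M$, the desired contradiction.

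For a series pair $\{e,f\}$, I would dually take $\Omega_e$ with $F(\Omega_e)=M/e$. A direct rank calculation from the cocircuit condition shows $f$ is neither a loop nor a coloop of $M/e$, so $f$ is a link or an unbalanced loop in $\Omega_e$. In the link case $f=uv$, I would subdivide $f$ with a new degree-two vertex $w$ incident to two edges playing the roles of $e$ and $f$, inheriting biases so that $\Omega'/e = \Omega_e$. In the unbalanced-loop case, with $f$ at $v$, I would split $v$ into $v,v'$, add $e$ as a new link $vv'$, and redraw $f$ as a link $vv'$ with the digon $\{e,f\}$ declared unbalanced (all other edges remaining at $v$). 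In either case $\Omega'/e = \Omega_e$ holds directly from the contraction rule; the cocircuit hypothesis on $\{e,f\}$---equivalently, that $f$ is a coloop of $M\setminus e$ but not of $M/e$---then forces $\Omega'\setminus e$ to represent $M\setminus e$, and hence $F(\Omega') = M$. The theta property transfers via a bijection of theta subgraphs: the newly introduced degree-two vertex cannot lie in any theta, since its two incident edges close a digon rather than contribute to a simple path between two distinct hubs. The most delicate step is the verification that $F(\Omega')\setminus e = M\setminus e$ in the unbalanced-loop subcase, where the non-coloop status of $f$ in $M/e$ is essential in ensuring that the newly created parallel digon does not make $e$ (or $f$) a coloop of $F(\Omega')$, which would spoil the $2$-cocircuit $\{e,f\}$.
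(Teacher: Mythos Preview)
Your approach matches the paper's: for each of the four forbidden structures you represent a one-element minor by a biased graph and extend it, with the same constructions (parallel edge or second unbalanced loop for a parallel pair; subdivision or unbalanced digon for a series pair). Handling loops and coloops via Observation~\ref{obs:an_ex_min_is_connected} is a valid shortcut; the paper instead gives direct extensions (adding a balanced loop, respectively a pendant edge). One small slip: in the link-subdivision subcase of the series pair, the new degree-two vertex $w$ has \emph{distinct} neighbours $u,v$, so its incident edges do not form a digon and $w$ can appear as an interior vertex of a theta path---your stated reason for the theta property fails there. The property nonetheless transfers via the contraction bijection $\Theta \mapsto \Theta/e$ between thetas of $\Omega'$ and thetas of $\Omega_e$ (biases match by construction), so the argument is sound.
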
 

\begin{proof} 
Suppose $M$ has a loop $e$.  
By minimality, there is a biased graph $\GB$ representing $M \setminus e$.  Adding a balanced loop labelled $e$ incident to any vertex of $G$ yields a biased graph representing $M$, a contradiction.  
Similarly, if $M$ has a coloop $f$, consider a biased graph $\GB$ representing $M/f$.  
Adding a new vertex $w$, choosing any vertex $v \in V(G)$, and adding edge $f= vw$ to $G$ yields a biased graph representing $M$, a contradiction.  

Now suppose $M$ has a two-element circuit $\{e,f\}$.  
Let $\GB$ be a biased graph representing $M \setminus e$.  
If $f$ is a link in $G$, say $f = uv$, then let $G'$ be the graph obtained from $G$ by adding $e$ in parallel with $f$ so $e$ also has endpoints $u$ and $v$, and let $\Bb' = \Bb \cup \{C \setminus f \mathbin{\cup} e \mid f \subset C \in \Bb\}$.  
If $f$ is an unbalanced loop in $G$, say incident to $u \in V(G)$, then let $G'$ be the graph obtained from $G$ by adding $e$ as an unbalanced loop also incident with $u$, and let $\Bb' = \Bb$.  
Then $M = F(G',\Bb')$, a contradiction.  

Similarly, if $e$ and $f$ are elements in series in $M$, let $\GB$ be a biased graph representing $M/e$.  
If $f$ is a link in $G$, say $f=uv$, then let $G'$ be the graph obtained from $G$ by deleting $f$, adding a new vertex $w$, and putting $f=uw$ and $e=wv$; let $\Bb' = \{C \mid C \in \Bb$ or $C/e \in \Bb\}$.  
If $f$ is an unbalanced loop in $G$, say incident to $u \in V(G)$, let $G'$ be the graph obtained from $G$ by deleting $f$, adding a new vertex $w$, and adding edges $e$ and $f$ in parallel, both with endpoints $u,w$; let $\Bb'=\Bb$ (so $\{e,f\}$ is an unbalanced cycle).   
Again, then $M = F(G',\Bb')$, a contradiction.  
\end{proof}

\subsection{Working with biased graphs} 
\label{sec:Preliminaries}  

Before determining further properties of excluded minors, we need to develop some tools and establish some basic facts about biased graphs.  
If $X, Y$ are subgraphs of a graph $G$, an \emph{$X$-$Y$ path} in $G$ is a path that meets $X \cup Y$ exactly in its endpoints, with one endpoint in $X$ and the other in $Y$.

\paragraph{Rerouting.}

Let $G$ be a graph, let $P$ be a path in $G$, and let $Q$ be a path internally disjoint from $P$ linking two vertices $x, y \in V(P)$.  
We say the path $P'$ obtained from $P$ by replacing the subpath of $P$ linking $x$ and $y$ with $Q$ is obtained by \emph{rerouting} $P$ along $Q$.  

\begin{obs} 
\label{state:rerouting_paths}
Given two $u$-$v$ paths $P, P'$ in a graph, $P$ may be transformed into $P'$ by a sequence of reroutings.  
\end{obs}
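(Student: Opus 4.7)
The plan is to induct on a natural measure of how close $P$ is to $P'$, namely the length of the longest common initial subpath of $P$ and $P'$ starting from $u$. The base case $P = P'$ requires zero reroutings. Otherwise, let $x$ be the last vertex such that the $u$-to-$x$ subpaths of $P$ and $P'$ coincide; at $x$ the two paths diverge while both must still reach $v$.

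Since $P$ and $P'$ both terminate at $v$, the subpath of $P'$ from $x$ onwards must eventually meet $P$ again. Let $y$ be the first vertex on $P'$ after $x$ that also lies on $P$, and let $Q$ be the $x$-to-$y$ subpath of $P'$. By the choice of $y$, no internal vertex of $Q$ lies on $P$, so $Q$ is internally disjoint from $P$ and hence a legal rerouting path. Let $P''$ be the walk obtained by rerouting $P$ along $Q$. I would next verify that $P''$ is in fact a $u$-$v$ path: the $u$-to-$x$ and $y$-to-$v$ portions are subpaths of $P$ (hence internally simple), $Q$ is a path, and no internal vertex of $Q$ appears on $P$, so vertices cannot repeat.

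Finally, observe that $P''$ agrees with $P'$ from $u$ to $x$ (carried over from the original agreement of $P$ and $P'$) and then from $x$ to $y$ along $Q$, which is a subpath of $P'$. Thus the common initial subpath of $P''$ and $P'$ extends at least to $y$, which is strictly beyond $x$. Since this measure is bounded by the number of vertices of $P'$, iterating this single rerouting step finitely many times transforms $P$ into $P'$.

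I do not expect any real obstacle here; the only point requiring care is confirming that $P''$ is a genuine path, which follows immediately from the internal disjointness of $Q$ from $P$. This observation is really just the assertion that the ``replace a chord'' move on $u$-$v$ paths acts transitively on the set of such paths, and the first-divergence/first-return argument gives a canonical next move.
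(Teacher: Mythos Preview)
Your proposal is correct and follows essentially the same argument as the paper: both locate the last vertex $x$ of common initial agreement, take $y$ to be the first vertex of $P'$ after $x$ lying on $P$, reroute along the $x$--$y$ subpath $Q$ of $P'$, and observe that the common initial segment strictly grows. Your version is slightly more explicit in checking that the rerouted walk is a path, but the strategy is identical.
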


\begin{proof}
To see this, suppose $P$ and $P'$ agree on an initial segment from $u$.  
Let $x$ be the final vertex on this common initial subpath.  
If $x=v$, then $P=P'$, so assume $x \not= v$.  
Let $y$ be the next vertex of $P'$ following $x$ that is also in $P$.  
Denote the subpath of $P'$ from $x$ to $y$ by $Q$.  
Since $y$ is different from $x$, the path obtained by rerouting $P$ along $Q$ has a strictly longer common initial segment with $P'$ than $P$.  
Continuing in this manner, eventually $x = v$, and $P$ has been transformed into $P'$.  
\end{proof}

The relevance of this for us is the following simple fact.  
If a subpath $R$ of a path $P$ is rerouted along $Q$, and the cycle $R \cup Q$ is balanced, we refer to this as rerouting \emph{along a balanced cycle}.  
If $C$ is a cycle, $x, y$ are distinct vertices in $C$, $P$ is an $x$-$y$ path contained in $C$, $Q$ is an $x$-$y$ path internally disjoint from $C$, and the cycle $P \cup Q$ is balanced, we say the cycle $C'$ obtained from $C$ by rerouting $P$ along $Q$ is obtained from $C$ by \emph{rerouting along a balanced cycle}.  
The following fact will be used extensively.  

\begin{lem} \label{lem:rerouting_along_a_bal_cycle}
If $C'$ is obtained from $C$ by rerouting along a balanced cycle, then $C$ and $C'$ have the same bias.  
\end{lem}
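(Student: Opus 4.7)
The plan is to observe that the three paths $P$, $Q$, and $R$ (where $R$ is the $x$-$y$ subpath of $C$ other than $P$) together form a theta subgraph of $G$, and then to invoke the theta property of $\Bb$.

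More precisely, I would first set $R := C \setminus P$, the complementary $x$-$y$ subpath of $C$, so that $C = P \cup R$ and $C' = R \cup Q$. Since $P$ and $R$ are the two arcs of a cycle joining $x$ and $y$, they are internally disjoint; by hypothesis $Q$ is internally disjoint from $C = P \cup R$. Hence $P$, $Q$, and $R$ are three pairwise internally disjoint $x$-$y$ paths, so their union is a theta subgraph $\Theta$ of $G$ whose three cycles are exactly $C = P \cup R$, $C' = R \cup Q$, and $D := P \cup Q$.

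By assumption $D$ is balanced. The theta property for $\Bb$ states that $\Theta$ cannot contain exactly two balanced cycles, so the number of balanced cycles among $\{C, C', D\}$ is $1$ or $3$. In the first case $C$ and $C'$ are both unbalanced; in the second, both are balanced. Either way, $C$ and $C'$ share the same bias, completing the proof.

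There is essentially no obstacle here: the lemma is a direct translation of the definition of the theta property once one identifies the correct theta subgraph. The only point requiring a sentence of care is verifying that $P$, $Q$, and $R$ are genuinely internally disjoint (so that they really do form a theta), which follows immediately from the assumption that $Q$ is internally disjoint from $C$.
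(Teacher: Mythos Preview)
Your proof is correct and follows exactly the same approach as the paper: the paper's proof is the one-liner ``Since $C \cup Q$ is a theta subgraph, this follows immediately from the theta property,'' and you have simply unpacked this by naming the third path $R$ and spelling out how the theta property forces $C$ and $C'$ to share bias.
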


\begin{proof} 
Since $C \cup Q$ is a theta subgraph, this follows immediately from the theta property.  
\end{proof}

\paragraph{Signed graphs.} 
A \emph{signed} graph consists of a graph $G$ together with a distinguished subset of edges $\Sigma \subseteq E(G)$ called its \emph{signature}.  
A signed graph naturally gives rise to a biased graph $(G,\Bb_{\Sigma})$ in which a cycle $C \in \Bb_{\Sigma}$ \iiff{} $|E(C) \cap \Sigma|$ is even (it is immediate that $\Bb_{\Sigma}$ satisfies the theta property).   
We say that an arbitrary biased graph $(G,\Bb)$ \emph{is} a signed graph if there exists a set $\Sigma \subseteq E(G)$ so that $\Bb_{\Sigma} = \Bb$.  The following gives a characterisation of when this occurs.  

\begin{prop} \label{prop:If_no_odd_theta} 
A biased graph is a signed graph \iiff it contains no contrabalanced theta subgraph.  
\end{prop}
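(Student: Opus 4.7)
My plan is to split into the forward direction (easy) and the converse (where the work lies). For the forward direction, I would observe that in any theta subgraph with cycles $C_1, C_2, C_3$, each edge is shared between exactly two of the $C_i$, so for any signature $\Sigma \subseteq E(G)$,
\[
|E(C_1) \cap \Sigma| + |E(C_2) \cap \Sigma| + |E(C_3) \cap \Sigma| = 2\,|E(\text{theta}) \cap \Sigma|
\]
is even. Hence an even number of the $C_i$ have odd intersection with $\Sigma$---equivalently, an even number are unbalanced in $(G, \mathcal{B}_\Sigma)$---which precludes the contrabalanced case.

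For the converse, assume $(G, \mathcal{B})$ has no contrabalanced theta. Treating components separately, I may assume $G$ is connected. I would fix a spanning tree $T$ of $G$, let $C_e$ denote the fundamental cycle of each non-tree edge $e$, and propose
\[
\Sigma := \{ e \in E(G) \setminus T : C_e \notin \mathcal{B}\}.
\]
The task is to show that for every cycle $C$ of $G$, $C \in \mathcal{B}$ if and only if $|E(C) \cap \Sigma|$ is even, which I would prove by induction on $k := |E(C) \setminus T|$. The base case $k = 1$ is immediate, since then $C$ is itself a fundamental cycle.

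For the inductive step with $k \geq 2$, I would pick a non-tree edge $e$ of $C$ and form $C' := C \triangle C_e$ in the $\mathbb{F}_2$-cycle space. In the clean case that $C'$ is a single cycle, the cycles $C, C_e, C'$ are the three cycles of the theta $C \cup C_e$. The theta property forbids exactly two balanced cycles in this theta, and the no-contrabalanced-theta hypothesis forbids all three being unbalanced, leaving the number of unbalanced cycles at $0$ or $2$. Coding unbalanced as $1$ and balanced as $0$, this yields
\[
\mathrm{bal}(C) \equiv \mathrm{bal}(C_e) + \mathrm{bal}(C') \pmod{2}.
\]
Combining this with the inductive hypothesis $\mathrm{bal}(C') \equiv |E(C') \cap \Sigma| \pmod{2}$, the definition of $\Sigma$ applied to $C_e$, and the set-theoretic identity $|E(C) \cap \Sigma| \equiv |E(C_e) \cap \Sigma| + |E(C') \cap \Sigma| \pmod{2}$ finishes this case.

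The main obstacle is the remaining case where $C'$ is not a single cycle but instead decomposes into $m \geq 2$ edge-disjoint cycles $D_1, \dots, D_m$; then $\{C, C_e, C'\}$ is not a theta and the above parity identity is not directly available. I plan to handle this by exhibiting a sequence of intermediate cycles through which the theta parity identity can be chained one pair at a time to yield the required parity relation $\mathrm{bal}(C) + \mathrm{bal}(C_e) + \sum_j \mathrm{bal}(D_j) \equiv 0 \pmod{2}$, at which point the inductive hypothesis on each smaller $D_j$ closes the case. Abstractly, this step amounts to the claim that every $\mathbb{F}_2$-linear relation among cycles of $G$ is generated by theta relations, so that the imbalance map, which respects every theta relation by our hypothesis, extends to a well-defined $\mathbb{F}_2$-linear functional on the cycle space $Z_1(G;\mathbb{F}_2)$. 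Any such functional has the form $D \mapsto |E(D) \cap \Sigma| \bmod 2$ for some $\Sigma \subseteq E(G)$, and this $\Sigma$ is the desired signature.
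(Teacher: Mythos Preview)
Your setup---spanning tree $T$, signature $\Sigma$ from unbalanced fundamental cycles, induction on $|E(C)\setminus T|$---matches the paper exactly. The gap is in the inductive step. Your clean-case claim, that if $C' = C \triangle C_e$ is a single cycle then $C \cup C_e$ is a theta with cycles $C, C_e, C'$, is false. Take $V(G)=\{1,\ldots,6,a,b\}$, let $C$ be the $6$-cycle $1\text{-}2\text{-}3\text{-}4\text{-}5\text{-}6\text{-}1$, let $T=\{23,\,2a,\,a5,\,45,\,56,\,4b,\,b1\}$, and pick $e=12$. Then $C_e = 1\text{-}2\text{-}a\text{-}5\text{-}4\text{-}b\text{-}1$ meets $C$ in the two nonadjacent edges $12$ and $45$; the symmetric difference $C'$ is the single $8$-cycle $1\text{-}6\text{-}5\text{-}a\text{-}2\text{-}3\text{-}4\text{-}b\text{-}1$, yet $C\cup C_e$ has four vertices of degree $3$ and is not a theta. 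So the relation $\mathrm{bal}(C)\equiv \mathrm{bal}(C_e)+\mathrm{bal}(C')$ cannot be read off from one theta here, and your non-clean case has the same obstruction, only worse. Your abstract reformulation---that theta relations generate all $\mathbb{F}_2$-relations among cycles---is true, but it is essentially the proposition restated; you have not given an independent argument for it.

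The paper sidesteps this with a different move in the inductive step. Instead of symmetric-differencing with a fundamental cycle, it chooses a \emph{minimal} path $P\subseteq T\setminus E(C)$ joining two vertices $x,y\in V(C)$ (such a $P$ exists because $C$ has at least two non-tree edges, forcing the tree path between the ends of one of them to leave $C$). Minimality makes $P$ internally vertex-disjoint from $C$, so $C\cup P$ is always a genuine theta; its two other cycles $C_1=P_1\cup P$ and $C_2=P_2\cup P$ (with $P_1,P_2$ the $x$-$y$ arcs of $C$) each have strictly fewer non-tree edges than $C$, since $P\subseteq T$ and neither $P_i$ can lie in $T$ (else $P_i$ would equal $P$, impossible as $E(P)\cap E(C)=\emptyset$). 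This delivers the parity relation cleanly at every step with no case analysis on the shape of $C'$.
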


\begin{proof} 
First suppose that $(G,\Bb)$ is a signed graph, and choose $\Sigma \subseteq E(G)$ so that $\Bb_{\Sigma} = \Bb$.  If $P_1, P_2, P_3$ are three internally disjoint paths forming a theta subgraph in $G$ then two of $|E(P_1) \cap \Sigma|$, $|E(P_2) \cap \Sigma|$, and $|E(P_3) \cap \Sigma|$ have the same parity, and these paths will form a balanced cycle.  Thus, every theta subgraph contains a balanced cycle, and thus no contrabalanced theta subgraph exists.  

To prove the converse, let $(G,\Bb)$ be a biased graph which has no contrabalanced theta subgraph.  We may assume $G$ is connected; if not, apply the following argument to each component of $G$.  Let $T$ be a spanning tree of $G$ and define $\Sigma$ by the following rule: 
\[ \Sigma = \{ e \in E(G) \setminus E(T) \mid \mbox{the unique cycle in $T + e$ is unbalanced} \}. \]
We claim that $\Bb_{\Sigma} = \Bb$.  To prove this, we will show that a cycle $C$ is in $\Bb$ \iiff $|E(C) \cap \Sigma|$ is even, and we do this by induction on the number of edges in $C \setminus E(T)$.  

If all but one edge $e$ of $C$ is contained in $T$, then the result holds by definition of $\Sigma$.  
Suppose $|E(C) \setminus E(T)|=n \geq 2$, and the result holds for all cycles having less than $n$ edges not in $T$.  
Choose a minimal path $P$ in $T \setminus E(C)$ linking two vertices $x, y$ in $V(C)$ (such a path exists since $C$ has at least two edges not in $T$: say $e=uv, f \in C \setminus T$; the $u$-$v$ path in $T$ avoids $f$ and so at some vertex leaves $C$ and then at some vertex returns to $C$).  
Cycle $C$ is the union of two internally disjoint $x$-$y$ paths $P_1, P_2$ and together $P, P_1, P_2$ form a theta subgraph of $G$.  
Let $C_1 = P_1 \cup P$ and $C_2 = P_2 \cup P$.  
Since $(G,\Bb)$ has no contrabalanced theta, the cycle $C$ is unbalanced if and only if exactly one of $C_1$ and $C_2$ is unbalanced.  However, 
by induction (each of $C_1$ and $C_2$ has fewer edges not in $T$), this holds if and only if $|E(C_1) \cap \Sigma|$ and 
$|E(C_2) \cap \Sigma|$ have different parity.  This is equivalent to $|E(C) \cap \Sigma|$ being odd, thus completing the argument.
\end{proof}

\paragraph{Balancing vertices.} 
If $\Omega = (G,\Bb)$ is a biased graph and $v \in V(G)$ we let $\Omega - v$ denote the biased graph $(G - v, \Bb')$ where $\Bb'$ consists of all cycles in $\Bb$ which do not contain $v$.  
A vertex $v$ in a biased graph $\Omega$ is \emph{balancing} if $\Omega - v$ is balanced.  
When $\Omega$ has a balancing vertex, the biases of its cycles have a simple structure.  
We denote the set of links incident with a vertex $v$ in a graph by $\delta(v)$.  

\begin{obs} \label{state:ei_ej_with_balancing_vertex_have_same_parity_cycles}
Let $\GB$ be a biased graph and suppose $u$ is a balancing vertex in $\GB$.  
Let $\delta(u) = \{e_1, \ldots, e_k\}$.  
For each pair of edges $e_i, e_j$ ($1 \leq i < j \leq k$), either all cycles containing $e_i$ and $e_j$ are balanced or all cycles containing $e_i$ and $e_j$ are unbalanced.  
\end{obs}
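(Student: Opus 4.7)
The plan is to show that any two cycles $C$ and $C'$ both containing $e_i$ and $e_j$ share the same bias; this suffices for the observation. The key is to exploit the rerouting machinery of Observation~\ref{state:rerouting_paths} and Lemma~\ref{lem:rerouting_along_a_bal_cycle} inside the balanced subgraph $\Omega - u$.

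First I would observe that since both $e_i$ and $e_j$ are incident with $u$, any simple cycle containing both must visit $u$ exactly once, entering along one and leaving along the other. Hence $C$ has the form $e_i \cup P \cup e_j$, where $P$ is a path in $G - u$ joining the non-$u$ endpoints of $e_i$ and $e_j$. (If $e_i$ and $e_j$ are parallel, so that these endpoints coincide, the only such cycle is the digon $\{e_i, e_j\}$, and there is nothing to prove.) Analogously, write $C' = e_i \cup P' \cup e_j$.

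Next I would apply Observation~\ref{state:rerouting_paths} inside $G - u$ to transform $P$ into $P'$ by a sequence of reroutings, each one replacing a subpath $R$ of the current path $P^*$ by a path $Q$ internally disjoint from $P^*$ and lying in $G - u$. Because $Q$ lies in $G - u$, it avoids $u$ and the edges $e_i, e_j$, so $Q$ is internally disjoint from the full cycle $e_i \cup P^* \cup e_j$. Thus each rerouting of the path lifts to a rerouting of this cycle along the cycle $R \cup Q$, which lies entirely in $G - u$ and is therefore balanced since $u$ is balancing. Lemma~\ref{lem:rerouting_along_a_bal_cycle} then shows that the bias is preserved at each step, so $C$ and $C'$ have the same bias.

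I do not anticipate any serious obstacle. The only point meriting a careful check is that a rerouting of the path $P$ inside $G - u$ genuinely lifts to a rerouting of the full cycle $e_i \cup P \cup e_j$ in $\Omega$; this is immediate because any chord path $Q$ drawn from $G - u$ is automatically internally disjoint from the two edges at $u$, so the ``reroutable'' structure of $P$ coincides with that of $C$ along its subpath in $G - u$.
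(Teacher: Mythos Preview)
Your proposal is correct and follows essentially the same approach as the paper: decompose each cycle through $e_i,e_j$ as $e_i \cup P \cup e_j$ with $P$ an $x_i$-$x_j$ path in $G-u$, reroute $P$ to $P'$ inside $G-u$ via Observation~\ref{state:rerouting_paths}, and invoke Lemma~\ref{lem:rerouting_along_a_bal_cycle} at each step since every rerouting cycle lies in the balanced graph $\Omega-u$. Your write-up is in fact slightly more careful than the paper's (you note the parallel-edge degenerate case and explain why a rerouting in $G-u$ lifts to one of the full cycle), but the argument is the same.
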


\begin{proof} 
Fix $i, j$, and consider two cycles $C$ and $C'$ containing $e_i$ and $e_j$.  
Let $e_i = ux_i$ and $e_j = ux_j$.  
Write $C = u e_i x_i P x_j e_j u$ and $C' = u e_i x_i P' x_j e_j u$.  
Path $P$ may be transformed into $P'$ by a sequence of reroutings, $P{=}P_0, P_1, \ldots, P_l{=}P'$ in $G-u$.  
Since $u$ is balancing, each rerouting is along a balanced cycle.  
Hence by Lemma \ref{lem:rerouting_along_a_bal_cycle}, at each step $m \in \{1,\ldots, l\}$, the cycles $u e_i x_i P_{m-1} x_j e_j u$ and $u e_i x_i P_m x_j e_j u$ have the same bias.  
\end{proof} 

The above fact prompts the introduction of a relation on $\delta(v)$ for a balancing vertex $v$.  Namely, we define $\sim$ on $\delta(v)$ by the rule that $e,f \in \delta(v)$ satisfy $e \sim f$ if either $e = f$ or there exists a balanced cycle containing both $e$ and $f$.  Clearly $\sim$ is reflexive and symmetric.  The relation $\sim$ is also transitive: 
Suppose $e_1 \sim e_2$ and $e_2 \sim e_3$ and let $e_i{=}v x_i$ for $1 \le i \le 3$.  Since there is a balanced cycle containing $x_1 v x_2$ and a balanced cycle containing $x_2 v x_3$, there is an $x_1$-$x_2$ path avoiding $v$ and an $x_2$-$x_3$ path avoiding $v$.  
Hence there is an $x_1$-$x_3$ path $P$ avoiding $v$ and a $P$-$x_2$ path $Q$ avoiding $v$.  
Let $P \cap Q = \{y\}$.  Together, $v$, $e_1$, $e_2$, $e_3$, $P$, and $Q$ form a theta subgraph of $G$.  
By Observation \ref{state:ei_ej_with_balancing_vertex_have_same_parity_cycles}, the cycle of this theta containing $e_1,e_2$ and the cycle containing $e_2,e_3$ are both balanced.  
It follows that the cycle of this theta containing $e_1,e_3$ is also balanced, so $e_1 \sim e_3$.  We summarize this important property below.  

\begin{obs}
If $v$ is a balancing vertex of $\Omega$, there exists an equivalence relation $\sim$ on $\delta(v)$ so that a cycle $C$ of $\Omega$ containing $v$ is balanced if and only if it contains two edges from the same equivalence class.  
\end{obs}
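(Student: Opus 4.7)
The plan is to take the relation $\sim$ on $\delta(v)$ already introduced in the discussion preceding the statement and verify that (i) it is an equivalence relation and (ii) its equivalence classes are exactly the obstruction to a cycle through $v$ being balanced. Because the text has already carried out most of the work informally, the proof is mainly a matter of packaging.

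First I would recall the definition: for $e,f \in \delta(v)$, set $e \sim f$ if $e = f$ or if some balanced cycle of $\Omega$ contains both $e$ and $f$. Reflexivity and symmetry are immediate from the definition. For transitivity, I would reproduce the argument from the text: given $e_1 \sim e_2$ and $e_2 \sim e_3$ with $e_i = v x_i$, the hypotheses provide an $x_1$-$x_2$ path and an $x_2$-$x_3$ path in $G - v$. Concatenating and trimming, I obtain an $x_1$-$x_3$ path $P$ in $G - v$ and a $P$-$x_2$ path $Q$ in $G - v$ meeting $P$ at a unique vertex $y$. Then $v$ together with $e_1, e_2, e_3, P, Q$ forms a theta subgraph of $G$. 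By Observation \ref{state:ei_ej_with_balancing_vertex_have_same_parity_cycles} (applied to $v$) the two cycles of this theta through $e_1, e_2$ and through $e_2, e_3$ are balanced, so by the theta property the third cycle, which contains $e_1$ and $e_3$, is balanced as well; hence $e_1 \sim e_3$.

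Second, I would establish the "balanced iff same class" statement. Every cycle $C$ of $\Omega$ through $v$ meets $\delta(v)$ in exactly two edges $e$ and $f$. If $C$ is balanced, then $e \sim f$ by definition of $\sim$. Conversely, if $e \sim f$ with $e \neq f$, then some balanced cycle $C'$ contains both $e$ and $f$; applying Observation \ref{state:ei_ej_with_balancing_vertex_have_same_parity_cycles} to the pair $e,f$ forces every cycle containing both $e$ and $f$ — and in particular $C$ — to be balanced. (The case $e = f$ does not arise for a cycle.)

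The only nonroutine point is the transitivity step, and the only subtlety there is being careful that the concatenation of the two paths in $G - v$ actually gives rise to an honest theta subgraph, which is handled by first passing to an $x_1$-$x_3$ subpath $P$ and then taking a minimal $P$-$x_2$ path $Q$; this ensures $P \cup Q \cup \{e_1,e_2,e_3\}$ together with $v$ meets the definition of a theta graph, so Observation \ref{state:ei_ej_with_balancing_vertex_have_same_parity_cycles} and the theta property apply directly.
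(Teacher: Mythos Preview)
Your proposal is correct and follows essentially the same route as the paper: the transitivity argument via the theta subgraph built from $e_1,e_2,e_3$, $P$, and $Q$ is exactly the argument given in the paragraph preceding the Observation, and the ``balanced iff same class'' direction is immediate from the definition of $\sim$ together with Observation~\ref{state:ei_ej_with_balancing_vertex_have_same_parity_cycles}, just as you say.
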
 
\noindent We call the $\sim$ classes of $\delta(v)$ its \emph{b-classes}.  

\paragraph{$k$-signed graphs.}  
These are generalisations of signed graphs which we use to work with biased graphs with balancing vertices and other related biased graphs.  
A \emph{$k$-signed graph} consists of a graph $G$ together with a collection $\mathbf{\Sigma} = \{ \Sigma_1, \ldots, \Sigma_k \}$ of subsets of $E(G)$, which we again call its \emph{signature}.  
A $k$-signed graph gives rise to a biased graph $(G, \Bb_{\mathbf{\Sigma}})$ in which a cycle $C \in \Bb_{\mathbf{\Sigma}}$ \iiff $|E(C) \cap \Sigma_i|$ is even for every $1 \le i \le k$.  
Again it is straightforward to verify that $\mathbf{\Sigma}$ satisfies the theta property.  
We say that an arbitrary biased graph $(G,\Bb)$ \emph{is} a $k$-signed graph if there exists a collection $\mathbf{\Sigma}$ so that $\Bb_{\mathbf{\Sigma}} = \Bb$.  
A 1-signed graph is a signed graph.  
The reader familiar with group-labelled graphs will note that signed graphs are group-labelled graphs where the associated group is $\mathbb{Z}_2{=}\mathbb{Z}/2\mathbb{Z}$, and our $k$-signed graphs are group-labelled by $\mathbb{Z}_2^k$. 

\begin{obs} \label{obs:relabelling_when_there_is_a_balancing_vertex}
Let $(G,\Bb)$ be a biased graph with a balancing vertex $v$ after deleting its set $U$ of unbalanced loops.  
Let $\{ \Sigma_1, \ldots, \Sigma_k \}$ be the partition of $\delta(v)$ into b-classes in $\GB \setminus U$, and let $\mathbf{\Sigma} = \{ U, \Sigma_1, \ldots, \Sigma_k \}$.  
Then $(G,\Bb)$ is a $k$-signed graph with $\Bb_{\mathbf{\Sigma}} = \Bb_{\mathbf{\Sigma} \setminus \Sigma_i} = \Bb$ for every $1 \le i \le k$.  
\end{obs}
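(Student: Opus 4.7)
The plan is to verify directly from the definitions that the parity conditions imposed by $\mathbf{\Sigma}$ cut out exactly the balanced cycles of $(G,\Bb)$, and then to observe that the constraint coming from any single b-class $\Sigma_i$ is redundant. I would partition the cycles of $G$ into three types --- (i) loops, (ii) cycles of length at least two that avoid $v$, and (iii) cycles of length at least two passing through $v$ --- and check $C \in \Bb \iff C \in \Bb_{\mathbf{\Sigma}}$ on each type.

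Types (i) and (ii) I would dispose of in a line each. A loop $C$ meets no $\Sigma_j$ (since each $\Sigma_j \subseteq \delta(v)$ consists of links), so $C \in \Bb_{\mathbf{\Sigma}}$ iff $|C \cap U|$ is even, iff $C \notin U$; this matches the bias. A cycle of length at least two avoiding $v$ lies in $(\GB \setminus U) - v$ and is therefore balanced by the balancing property of $v$; it also misses $U$ and every $\Sigma_j$, so vacuously $C \in \Bb_{\mathbf{\Sigma}}$.

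The substantive case is (iii). Here $C$ meets $\delta(v)$ in exactly its two edges $e, f$ incident with $v$, and $|C \cap U| = 0$. Since $\Sigma_1, \ldots, \Sigma_k$ partition $\delta(v)$, either $e$ and $f$ lie in a common class $\Sigma_j$ --- in which case $|C \cap \Sigma_l| \in \{0, 2\}$ for every $l$, so $C \in \Bb_{\mathbf{\Sigma}}$ --- or they lie in distinct classes $\Sigma_j, \Sigma_l$ with $|C \cap \Sigma_j| = |C \cap \Sigma_l| = 1$, so $C \notin \Bb_{\mathbf{\Sigma}}$. By the definition of the b-class partition recorded just before the statement, these two sub-cases correspond exactly to $C$ being balanced, respectively unbalanced, completing $\Bb_{\mathbf{\Sigma}} = \Bb$.

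For the final claim that any single $\Sigma_i$ may be omitted, the key observation is that every cycle meets $\delta(v) = \Sigma_1 \cup \cdots \cup \Sigma_k$ in $0$ or $2$ edges, so
\[ \sum_{j=1}^{k} |C \cap \Sigma_j| \equiv 0 \pmod 2, \]
and hence the parity of $|C \cap \Sigma_i|$ is determined by the parities of the remaining $|C \cap \Sigma_j|$. The $\Sigma_i$-constraint is therefore implied by the others, giving $\Bb_{\mathbf{\Sigma} \setminus \Sigma_i} = \Bb_{\mathbf{\Sigma}}$. The only real obstacle is bookkeeping --- keeping straight what lies in $U$, what lies in $\delta(v)$, and in which b-class --- but once the definitions are unpacked the verification is routine.
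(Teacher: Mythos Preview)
Your proof is correct and is essentially an explicit unpacking of the paper's one-line argument, which simply says the result ``follows easily from the fact that $\sim$ is an equivalence relation in $\GB \setminus U$.'' Your case analysis on the three types of cycles and the parity argument for redundancy of any single $\Sigma_i$ are exactly the verifications the paper is gesturing at.
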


\begin{proof} 
This follows easily from the fact that $\sim$ is an equivalence relation in $\GB \setminus U$.  
\end{proof}

\paragraph{Biased graph representations.}
In general, a frame matroid $M$ has more than one biased graph representing $M$.  
We will encounter several situations in which non-isomorphic biased graphs represent the same frame matroid.  
For our purposes, we require three results on non-isomorphic biased graphs representing the same frame matroid.  

(1) \ 
If $H$ is a graph, one way to obtain an unbalanced biased graph $\GB$ with $F\GB \iso M(H)$ is to \emph{pinch} two vertices of $H$, as follows.  
Choose two distinct vertices $u, v \in V(H)$, and let $G$ be the graph obtained from $H$ by identifying $u$ and $v$ to a single vertex $w$.  
Then $\delta(w) = \delta(u) \cup \delta(v) \setminus \{e \mid e = uv\}$ (an edge with endpoints $u$ and $v$ becomes a loop incident to $w$).  
Let $\Bb$ be the set of all cycles in $G$ not meeting both $\delta(u)$ and $\delta(v)$.  
It is easy to see that the circuits of the two matroids agree, so $F\GB \iso M(H)$.  
The biased graph $\GB$ obtained by pinching $u$ and $v$ of $H$ is a signed graph with $\Sigma = \delta(u)$

The signed graph obtained by pinching two vertices of a graph has a balancing vertex.  
Conversely, if $\GB$ is a signed graph with a balancing vertex $u$, then $\GB$ is obtained as a pinch of a graph $H$, which we may describe as follows.  
If $|\delta(u)/{\sim}| > 2$, then $u$ is a cut vertex and each block of $G$ contains at most two b-classes, else $\GB$ contains a contrabalanced theta, contradicting Proposition \ref{prop:If_no_odd_theta}.  
Hence each block of $G$ contains edges in at most two b-classes of $\delta(u)$.  
By Observation \ref{obs:relabelling_when_there_is_a_balancing_vertex} then, $\Bb$ is realised in each block $G_i$ ($i \in \{1, \ldots, n\}$) of $G$ by a single set $\Sigma_i  \subseteq \delta(u)$.  
Clearly, taking $\Sigma = \bigcup_i \Sigma_i$ yields $\Bb_\Sigma = \Bb$.  
Let $H$ be the graph obtained from $G$ by \emph{splitting} vertex $u$;  that is, replace $u$ with two vertices, $u'$ and $u''$, put all edges in $\Sigma$ incident to $u'$, and all edges in $\delta(u) \setminus \Sigma$ incident to $u''$; put unbalanced loops as $u'u''$ edges and leave balanced loops as balanced loop incident to either $u'$ or $u''$.  It is easily verified that $M(H) \iso F\GB$: 

\begin{prop} \label{prop:vertex_splitting_operation}
Let $(G, \Bb)$ be a signed graph with a balancing vertex $u$.  
If $H$ is obtained from $(G, \Bb)$ by splitting $u$, then $M(H) \iso F(G, \Bb)$.  
\end{prop}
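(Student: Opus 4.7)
The plan is to show that $M(H)$ and $F(G,\Bb)$ have the same circuit collection on the common edge set $E=E(G)=E(H)$ (splitting does not alter the edges); since a matroid is determined by its circuits, this suffices.

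First I would use the hypothesis that $u$ is balancing to cut down the four-case circuit classification for frame matroids recalled in the introduction. Every unbalanced cycle of $\GB$ contains $u$, so no two unbalanced cycles can be vertex-disjoint: this rules out loose handcuff (type 3) circuits and forces every tight handcuff (type 2) to be joined at $u$. Since $\GB$ is a signed graph, Proposition \ref{prop:If_no_odd_theta} forbids contrabalanced theta subgraphs, eliminating type (4). Hence the circuits of $F(G,\Bb)$ are precisely the balanced cycles of $\GB$ together with the tight handcuffs rooted at $u$.

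Next I would classify the cycles of $H$ by their intersection with $\{u',u''\}$. A cycle of $H$ avoiding both $u'$ and $u''$ corresponds to a cycle of $G$ avoiding $u$, which is balanced. A cycle meeting exactly one of $u',u''$ uses two edges whose $u$-ends lie on the same side of the split of $\delta(u)$ into $\Sigma$ and $\delta(u)\setminus\Sigma$, and after re-identifying $u'$ with $u''$ becomes a cycle of $G$ through $u$ whose intersection with $\Sigma$ has even parity, hence a balanced cycle. A cycle meeting both $u'$ and $u''$ decomposes as two edge-disjoint $u'$-$u''$ walks in $H$; re-identifying, these yield two edge-disjoint cycles of $G$ both through $u$, each having exactly one edge on each side of the partition at $u$ (an unbalanced loop of $G$ serving as a degenerate one-edge such cycle), so each is unbalanced and together they form a tight handcuff at $u$. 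Reversing each correspondence realises every circuit of $F(G,\Bb)$ as a cycle of $H$, giving equality of circuit sets.

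The main obstacle I anticipate is the careful bookkeeping around loops at $u$. Balanced loops may be placed at either $u'$ or $u''$ and behave identically in both matroids; unbalanced loops, however, become $u'u''$-links in $H$ and must simultaneously play the role of degenerate unbalanced cycles of $G$ and of one half of an edge-disjoint $u'$-$u''$ walk pair in $H$. In particular, the handcuff formed by two unbalanced loops at $u$ in $\GB$ must correspond to the two-edge cycle of parallel $u'u''$-links in $H$, and a handcuff consisting of an unbalanced loop together with a longer unbalanced cycle at $u$ must correspond to a cycle of $H$ traversing one $u'u''$-link plus a longer $u'$-$u''$ path. Once these degenerate cases are verified to fit the classification above, the equality of circuits, and hence the claimed isomorphism $M(H)\iso F(G,\Bb)$, is immediate.
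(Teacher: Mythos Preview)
Your proposal is correct and follows precisely the approach the paper has in mind: the paper itself does not give a detailed proof, stating only that ``it is easily verified that $M(H)\iso F(G,\Bb)$,'' and your circuit-by-circuit comparison is exactly that verification. The reduction of the frame circuit types to balanced cycles and tight handcuffs at $u$ (using that $u$ is balancing and that a signed graph has no contrabalanced theta), and the subsequent bijection with cycles of $H$ according to which of $u',u''$ they meet, is the natural and intended argument; your flagged bookkeeping for loops at $u$ is handled correctly.
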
 

(2) \ 
If $\GB$ is a biased graph with a balancing vertex $u$, then the following \emph{roll-up} operation produces another biased graph with frame matroid isomorphic to $F\GB$.  
Let $\Sigma_j = \{e_1, \ldots, e_l\}$ be the set of edges of one of the b-classes in $\delta(u)$.  
Let $(G',\Bb')$ be the biased graph obtained from $\GB$ by replacing each edge $e_i = u v_i  \in \Sigma_j$ with an unbalanced loop incident to its endpoint $v_i$ (see Figure \ref{fig:Rep_with_loops_bal_vertex}).  
\begin{figure}[tbp] 
\begin{center} 
\includegraphics[scale=0.9]{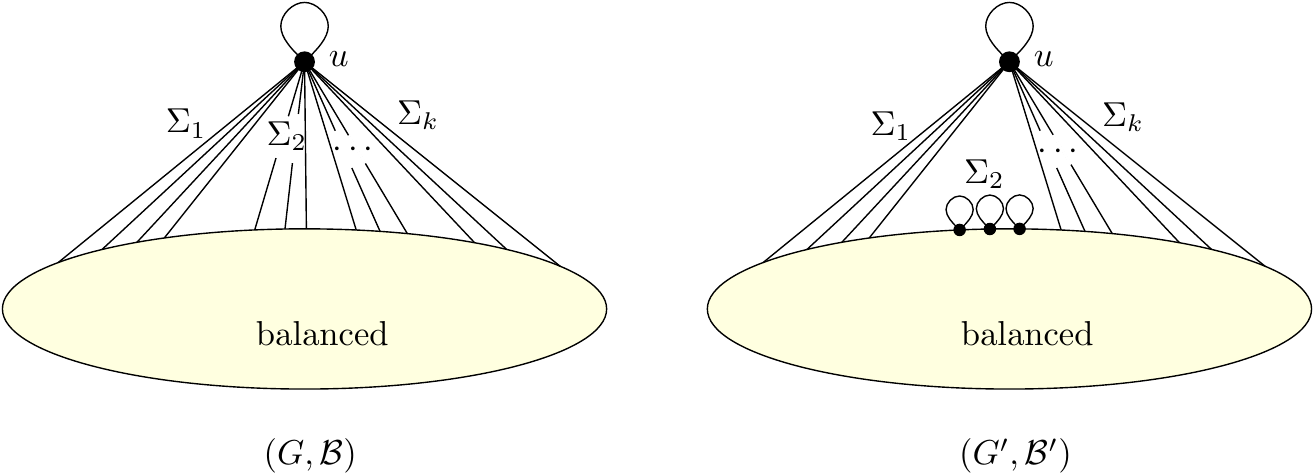}
\end{center} 
\caption{$F\GB \iso F\GBp$}
\label{fig:Rep_with_loops_bal_vertex} 
\end{figure} 
It is straightforward to check that $F\GB$ and $F\GBp$ have the same set of circuits.  
We say biased graph $\GBp$ is obtained from $\GB$ by the \emph{roll-up} of b-class $\Sigma_j$ of $\delta(u)$.  
This operation may also be performed in reverse: 
Let $\GBp$ be a biased graph in which $u$ is a balancing vertex after deleting unbalanced loops, and let $\Sigma_1, \ldots, \Sigma_k$ be the b-classes of $\delta(u)$ after deleting all unbalanced loops.  
Let $\GB$ be the biased graph obtained from $\GBp$ by replacing each unbalanced loop with an edge between $u$ and its original end, putting $\Sigma_{k+1} = \{e \mid e$ is an unbalanced loop in $\GBp\}$, setting $\mathbf{\Sigma} = \{\Sigma_1, \ldots, \Sigma_{k+1}\}$, and taking $\Bb=\Bb_{\mathbf{\Sigma}}$.  
Then $F\GB \iso F\GBp$, and we say $\GB$ is obtained from $\GBp$ by \emph{unrolling} the unbalanced loops of $\GBp$.  
Hence if $M$ is a frame matroid represented by a biased graph with a balancing vertex $u$ after deleting unbalanced loops, then every biased graph obtained by unrolling unbalanced loops, then rolling up a b-class of $\delta(u)$, also represents $M$.  
Observe also that if $H$ is a graph and $v \in V(H)$, then the biased graph $\GB$ obtained by rolling up all edges in $\delta(v)$ has $F\GB \iso M(H)$.  

(3) \ 
The operations of pinching and splitting, and of rolling up the edges of a b-class or unrolling unbalanced loops, are all special cases of the following \emph{twisted flip} operation.  
This operation may be applied to $k$-signed graphs having the following structure.  
Let $G$ be a graph, let $u \in V(G)$, let $G_0, \ldots, G_m$ be edge disjoint connected subgraphs of $G$, and let $\mathbf{\Sigma} = \{ \Sigma_1, \ldots, \Sigma_k \}$ be a collection of subsets of $E(G)$ satisfying the following (see Figure \ref{fig:flips_more_general}(a)).  
\begin{enumerate}

\item $E(G) \setminus \bigcup_{i=0}^m E(G_i)$ is empty or consists of loops at $u$.

\item $E(G_0) \cap \Sigma_i = \emptyset$ for $1 \le i \le k$.

\item  For every $1 \le i \le m$ there is a vertex $x_i$ so that $V(G_i) \cap \br{\bigcup_{j \not= i} V(G_j)} \subseteq \{u, x_i\}$.  

\item For every $1 \le i \le m$ there exists a unique $s_i$, $1 \le s_i \le k$, so that $E(G_i) \cap \Sigma_j = \emptyset$ for $j \neq s_i$.

\item Every edge in $E(G_i) \cap \Sigma_{s_i}$ is incident with $x_i$.
\end{enumerate}

Consider the resulting biased graph $(G, \Bb_{\mathbf{\Sigma}})$ and its associated frame matroid $F(G, \Bb_{\mathbf{\Sigma}})$.  
We obtain a biased graph $(G', \Bb_{\mathbf{\Sigma'}})$ with $F(G', \Bb_{\mathbf{\Sigma'}}) \iso F(G, \Bb_{\mathbf{\Sigma}})$ from $(G, \Bb_{\mathbf{\Sigma}})$ as follows (see Figures \ref{fig:flips_more_general}(b) and \ref{fig:flip_singlelobe}).  

\begin{itemize}
\item Redefine the endpoints of each edge of the form $e {=} y u \notin \Sigma_{s_i}$ so that $e {=} y x_i$ (note that an edge $e{=}x_i u \notin \Sigma_{s_i}$ thus becomes a loop $e{=}x_i x_i$).  

\item Redefine the endpoints of each edge of the form $e{=}y x_i \in \Sigma_{s_i}$ with $y \neq u$ so that $e{=}y u$.  

\item  For each $1 \leq j \le k$, let $\Sigma_j' = \{e \mid $ the endpoints of $e$ have been redefined so that $e{=}yx_i$ for some $y \in V(G_i) \} \cup \{ e \mid e{=}x_i u \in \Sigma_j\}$.  
Put $\mathbf{\Sigma'} = \{\Sigma_1', \ldots, \Sigma_k'\}$.  
\end{itemize}

\begin{thm} \label{prop:flip_operation}
If $(G', \Bb_{\mathbf{\Sigma'}})$ is obtained from $(G, \Bb_{\mathbf{\Sigma}})$ as a twisted flip, then $F(G, \Bb_{\mathbf{\Sigma}}) \iso F(G', \Bb_{\mathbf{\Sigma'}})$.  
\end{thm}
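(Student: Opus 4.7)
The plan is to prove the theorem in two stages. First, I would reduce to the case $m=1$ by exploiting the independence of the lobes. Then, for a single lobe, I would verify that $F(G, \Bb_{\mathbf{\Sigma}})$ and $F(G', \Bb_{\mathbf{\Sigma'}})$ have the same circuits.

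For the reduction, observe that distinct lobes $G_i, G_{i'}$ share only the vertex $u$ (as $x_i \ne x_{i'}$) and have disjoint edge sets; the endpoint redefinitions for different lobes therefore act on disjoint edges, and the signature updates on different $\Sigma_{s_i}$ are either independent (when $s_i \ne s_{i'}$) or contribute disjoint sets of edges to the same $\Sigma_j$ (when $s_i = s_{i'}$).  A short check using conditions 1--5 confirms that after performing the twisted flip on $G_1$ alone, the resulting biased graph again satisfies conditions 1--5 with respect to the remaining lobes.  Hence the flip can be applied one lobe at a time, reducing to the case $m=1$; in this case I write $x = x_1$, $\Sigma = \Sigma_{s_1}$, and note that only $\Sigma$ is modified to $\Sigma'$.

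For the single-lobe case, the ground sets agree, so it suffices to show the two matroids have the same circuits, using the four-type classification: balanced cycles, unbalanced thetas, tight handcuffs, and loose handcuffs.  For $C \subseteq E$, split $C = C_0 \sqcup C_1$ with $C_1 = C \cap E(G_1)$.  Since $G_0$ and the signatures $\Sigma_j$ for $j \ne s_1$ are unchanged by the flip, the analysis reduces to how $C_1$ behaves at $\{u, x\}$ and how it joins to $C_0$.  The key structural lemma is that the flip reattaches each unsigned edge-end at $u$ to $x$ and each signed edge-end at $x$ to $u$, and updates the signature precisely so that every moved edge toggles its $\Sigma$-membership.  Condition 5 (every signed edge of $G_1$ is incident with $x$) sharply restricts the possible configurations, and a direct enumeration shows the bias of any closed walk in the subgraph induced by $C$ in $G$ agrees with the bias of the corresponding closed walk in the subgraph induced by $C$ in $G'$, from which circuit-preservation follows.

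The principal obstacle will be the bookkeeping required because the \emph{type} of a circuit may change under the flip.  For instance, a tight handcuff consisting of an unbalanced cycle in $G_0$ at $u$ together with an unbalanced cycle in $G_1$ through both $u$ and $x$ can become a loose handcuff in $G'$: the $G_1$-cycle gets ``pushed over'' to $x$, becoming vertex-disjoint from the $G_0$-cycle, and the two are now joined by a newly created connecting path.  Handcuffs may transmute between the tight and loose types, and theta configurations admit analogous transitions, so I must carefully verify that each circuit configuration in $G$ yields a valid (possibly differently-typed) circuit configuration in $G'$, and vice versa.  The explicit description of $\Sigma'$ together with condition 5 makes this casework manageable but lengthy.
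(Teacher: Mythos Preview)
Your proposal is correct and follows the same approach as the paper, which simply asserts in one line that ``it is straightforward to check that $F(G, \Bb_{\mathbf{\Sigma}})$ and $F(G', \Bb_{\mathbf{\Sigma'}})$ have the same set of circuits''; you are supplying the details the paper omits. One small correction: conditions 1--5 do not force $x_i \neq x_{i'}$, so two lobes may share both $u$ and a common attachment vertex, but since the lobes are edge-disjoint and the flip on $G_i$ only reassigns endpoints of edges in $E(G_i)$, your one-lobe-at-a-time reduction still works.
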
 

\begin{proof} 
It is straightforward to check that $F(G, \Bb_{\mathbf{\Sigma}})$ and $F(G', \Bb_{\mathbf{\Sigma'}})$ have the same set of circuits.  
\end{proof} 

\begin{figure}[tbp] 
\begin{center} 
\includegraphics[scale=0.8]{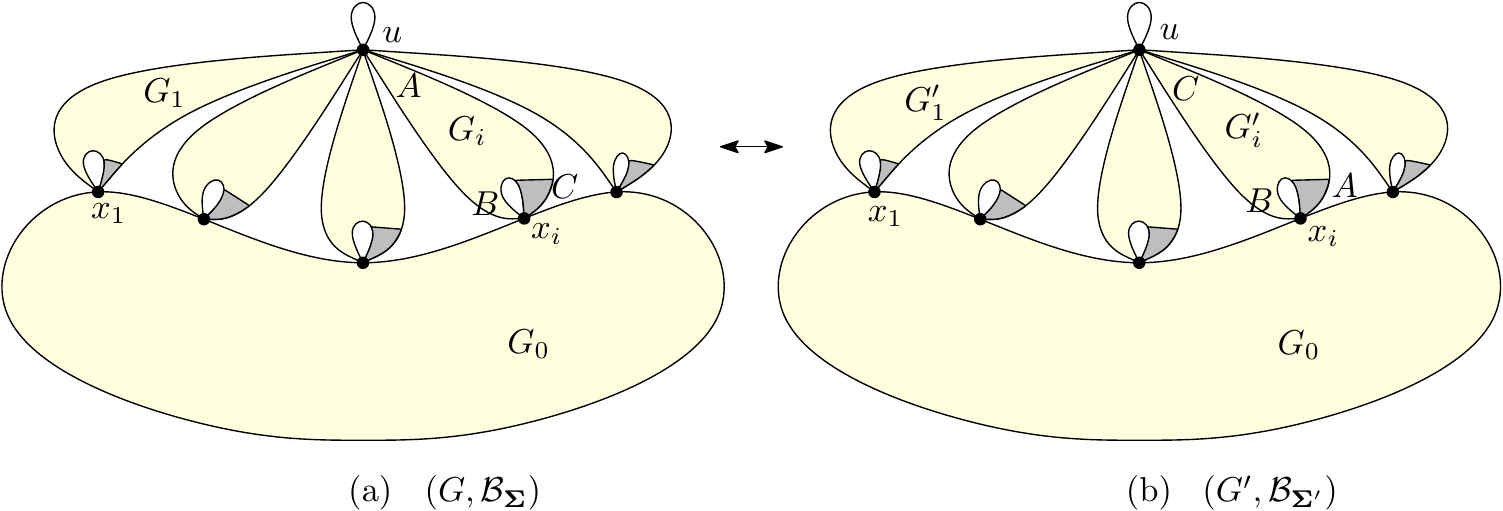}
\end{center} 
\caption{A twisted flip: Edges in $\mathbf{\Sigma}$ and $\mathbf{\Sigma'}$ are shaded; edges marked $A$ in $G$ become incident to $x_i$ in $G'$ and are in $\mathbf{\Sigma'}$; edges marked $C$ in $G$ become incident to $u$ in $G'$.}
\label{fig:flips_more_general} 
\end{figure} 

\begin{figure}[tbp] 
\begin{center} 
\includegraphics[scale=0.8]{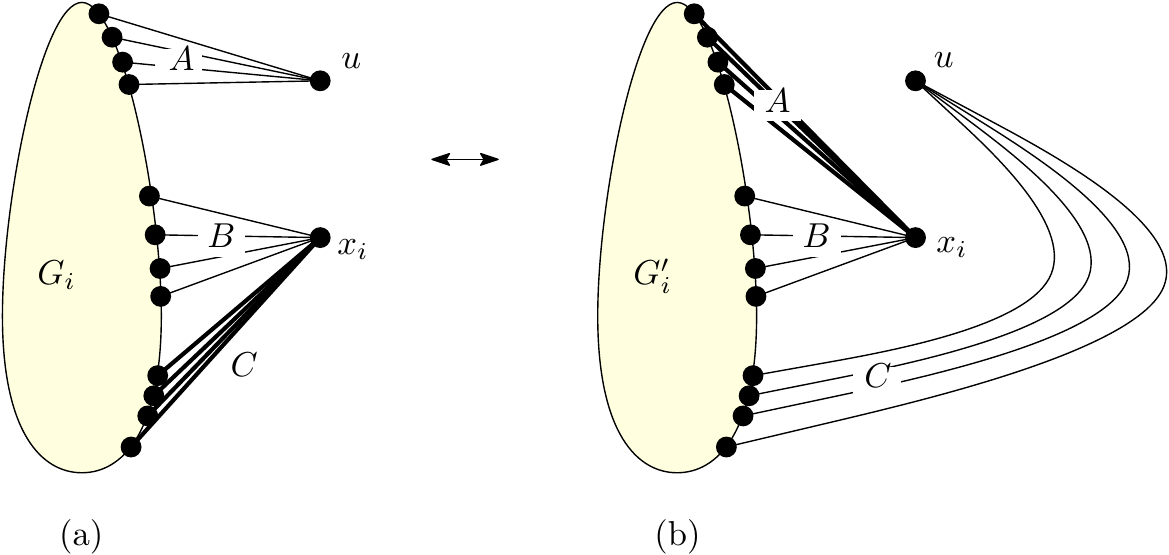}
\end{center} 
\caption{A twisted flip's effect on a single biased subgraph $G_i$.  Edges contained in $\mathbf{\Sigma}$ and in $\mathbf{\Sigma}'$ are bold.  In $G_i$ (a) edges marked $C$ are in some $\Sigma_i$, and in $G_i'$ (b) edges marked $A$ are then in $\Sigma_i'$.}
\label{fig:flip_singlelobe} 
\end{figure}

Observe that if $u$ is a balancing vertex in a biased graph $\GB$, and $A \in \delta(u)/\sim$, then applying Observation \ref{obs:relabelling_when_there_is_a_balancing_vertex} yields a signature $\mathbf{\Sigma} \subseteq E(G)$ so $\Bb = \Bb_{\mathbf{\Sigma}}$, with the property that $A$ is disjoint from the members of $\mathbf{\Sigma}$.  
Then a twisted flip operation on $(G,\Bb_{\mathbf{\Sigma}})$ is the operation of rolling up b-class $A$.  
A pinch operation is obtained as a twisted flip by taking $G = G_1$ and $\mathbf{\Sigma} = \emptyset$; then $(G,\Bb)$ is balanced, and the biased graph $(G',\Bb')$ given by a twisted flip is that obtained by pinching the vertices $v$ and $x_1$.  
Additionally, the special case when each $\delta_{G_i}(x_i) \cap \Sigma_{s_i} = \emptyset$ and there is no unbalanced loop incident to $u$ is the curling operation in \cite{MR3417216}.  

\section{2-sums of frame matroids and matroidals}
\label{sec:2-sums of frame matroids}

In this section we provide necessary and sufficient conditions for a 2-sum of two frame matroids to be frame, Theorem \ref{thm:biased2sum} below.  

The \emph{2-sum} of two matroids $M_1$ and $M_2$ \emph{on} elements $e_1 \in E(M_1)$ and $e_2 \in E(M_2)$, denoted $M_1 \twosume{e_1}{e_2} M_2$, is the matroid on ground set $\br{ E(M_1) \cup E(M_2) } \setminus \{e_1,e_2\}$ with circuits: the circuits of $M_i$ avoiding $e_i$ for $i=1,2$, together with $\{ \br{C_1 \cup C_2} \setminus \{e_1,e_2\} \mid C_i$ is a circuit of $M_i$ containing $e_i$ for $i=1,2 \}$.  
The following result (independently of Bixby, Cunningham, and Seymour) is fundamental.  

\begin{thm}[\cite{oxley:mt},Theorem 8.3.1] \label{thm:2sep_iff_2sum}
A connected matroid $M$ is not 3-connected \iiff there are matroids $M_1$, $M_2$, each of which is a proper minor of $M$, such that $M$ is a 2-sum of $M_1$ and $M_2$.  
\end{thm}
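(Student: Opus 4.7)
The plan is to prove the two directions separately. For the forward direction, suppose $M = M_1 \twosume{e_1}{e_2} M_2$ with each $M_i$ a proper minor of $M$. The definition of 2-sum requires $|E(M_i)| \geq 3$ with $e_i$ neither a loop nor coloop of $M_i$. Put $X = E(M_1) \setminus \{e_1\}$ and $Y = E(M_2) \setminus \{e_2\}$, so $|X|, |Y| \geq 2$. I would then combine the standard identity $r(M) = r(M_1) + r(M_2) - 1$ with $r_M(X) = r(M_1)$ and $r_M(Y) = r(M_2)$ (both holding because $e_i$ is not a coloop of $M_i$) to compute $\lambda_M(X,Y) = r_M(X) + r_M(Y) - r(M) + 1 = 2$. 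So $(X,Y)$ is a 2-separation, and $M$ is not 3-connected.

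For the reverse direction, assume $M$ is connected with a 2-separation $(X,Y)$, so $|X|, |Y| \geq 2$ and $r(X) + r(Y) = r(M) + 1$. I would build the summands abstractly and then recognise them as minors. Introduce a new element $p$ and define a matroid $M_1$ on $X \cup \{p\}$ whose circuits are (i) the circuits of $M$ contained in $X$, together with (ii) the sets $(C \cap X) \cup \{p\}$ for every circuit $C$ of $M$ meeting both sides; define $M_2$ on $Y \cup \{p\}$ symmetrically. The first step is to verify each $M_i$ satisfies the circuit axioms: weak circuit elimination for circuits of type (i) is inherited from $M$, and for eliminations involving type (ii) circuits the rank condition $r(X)+r(Y) = r(M)+1$ is precisely what forces the required consistency on traces across the partition. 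Connectedness of $M$ then guarantees $p$ is neither a loop nor a coloop of either $M_i$, and a circuit-by-circuit match confirms $M = M_1 \twosume{p}{p} M_2$.

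The main obstacle is exhibiting each $M_i$ as an honest \emph{proper} minor of $M$, rather than just an abstractly derived matroid. My plan is to show $M_1 \iso (M/C) \setminus D$ for some partition $Y = C \sqcup D \sqcup \{f\}$, with the surviving element $f$ playing the role of $p$. The existence of such a triple $(C, D, f)$ can be established by shrinking $Y$ one element at a time while maintaining $\lambda_M(\cdot,\cdot) = 2$: at each stage, either some element of $Y$ is spanned in $M$ by the remainder of $Y$ together with $X$ (in which case contract it, adding it to $C$), or some element lies in a cocircuit otherwise disjoint from $X \cup Y$ (in which case delete it, adding it to $D$). The 2-separation rank condition ensures this process terminates with exactly one element $f$ of $Y$ remaining, and that the resulting minor of $M$ coincides with the abstract $M_1$ under the relabelling $p \leftrightarrow f$. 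Because $|Y| \geq 2$, at least one deletion or contraction is performed, so $M_1$ is a proper minor; symmetry handles $M_2$. I expect this iterative minor-extraction, together with the verification that it preserves the matroid structure prescribed by the circuit list above, to be the most bookkeeping-heavy part of the argument, though it uses only standard matroid connectivity tools and no biased-graph machinery.
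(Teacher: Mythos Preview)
The paper does not prove this theorem at all: it is quoted as Theorem 8.3.1 from Oxley's \emph{Matroid Theory} and used as a black box, so there is no in-paper proof to compare your proposal against. Your outline is essentially the standard argument found in Oxley, and the overall architecture (exhibit the 2-separation from a 2-sum; conversely, from a 2-separation build the summands abstractly via circuit traces, verify the circuit axioms, and then realise each summand as a minor) is correct.

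One concrete issue: your description of the minor-extraction step is garbled. You write that at each stage either some element of $Y$ is ``spanned in $M$ by the remainder of $Y$ together with $X$'' (then contract it) or ``lies in a cocircuit otherwise disjoint from $X \cup Y$'' (then delete it). But $X \cup Y = E(M)$, so the first condition just says the element is not a coloop (always true in a connected matroid), and the second asks for a cocircuit disjoint from the whole ground set, which is impossible. What you presumably intend is the standard argument: pick a basis $B_2$ of $M_2$ containing the basepoint $p$, and show that contracting $B_2 \setminus \{p\}$ and deleting $E(M_2) \setminus B_2$ in $M$ (after identifying $p$ with a suitable element of $Y$) yields $M_1$; connectedness of $M_2$ is what supplies the needed element $f \in Y$ to play the role of $p$. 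You should rewrite that paragraph to reflect this, or simply cite Oxley as the paper does.
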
 

If $M$ is a matroid whose automorphism group is transitive on $E(M)$, then we write simply $M \twosume{}{f} N$ to indicate the 2-sum of $M$ and $N$ taken on some element $e \in E(M)$ and element $f \in E(N)$; if also $N$ has transitive automorphism group we may simply write $M \twosum N$.

\paragraph{Matroidals.} 
A \emph{matroidal} is a pair $(M,L)$ consisting of a matroid $M$ together with a distinguished subset $L$ of its elements.  
A matroidal $\Mm{=}(M,L)$ is \emph{frame} if there is a biased graph $\Om$ with $M=F(\Om)$ in which every element in $L$ is an unbalanced loop.  
We say a biased graph in which all elements in $L \subseteq E(\Om)$ are unbalanced loops is \emph{$L$-biased}.  
Thus $\Mm{=}(M,L)$ is a frame matroidal \iiff there exists an $L$-biased graph $\Om$ with $F(\Om)=M$.  
In this case we say $\Om$ \emph{represents} $\Mm$.  

Observe that, as long as $M$ is simple, this is equivalent to asking that there be a frame for $M$ containing $L$.  
To see this, recall the construction given on page \pageref{pagerefforbiaedgraphconstruction} of a biased graph representing a matroid $M$ with frame $B$.  
Though it is not required that the frame $B$ be disjoint from $E$, the construction assumes $B \cap E = \emptyset$.  
We can do away with this assumption as follows.  
Suppose $B \cap E = F$.  
Construct $\GB$ with edge set $E \setminus F$ as before.  
Now add an unbalanced loop incident to each vertex of $G$, and let each element of the frame be represented by the new loop incident to its vertex.  
Thus we obtain a frame extension $N$ without any added parallel elements in which all elements in the frame are unbalanced loops in the biased graph representing $N$.  
Conversely, as long as $M$ is simple, given a biased graph $\Om$ representing $M$, the set of unbalanced loops of $\Om$ is contained in a frame for $M$ | namely, after adding an unbalanced loop at each vertex not already having one, the basis consisting of the set of unbalanced loops.    

The main result of this section says that a 2-sum of two non-graphic frame matroids is frame \iiff each of the summands has a frame containing the element upon which the 2-sum is taken.  

\begin{thm} \label{thm:biased2sum}
Let $M_1,M_2$ be connected matroids and for $i=1,2$ let $e_i \in E(M_i)$.  The matroid $M_1 \twosume{e_1}{e_2} M_2$ is frame if and only if one of the following holds.
\begin{enumerate}
\item One of $M_1$ or $M_2$ is graphic and the other is frame.
\item  Both matroidals $(M_1,\{e_1\})$ and $(M_2,\{e_2\})$ are frame.  
\end{enumerate}
\end{thm}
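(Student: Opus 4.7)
The plan is to treat sufficiency and necessity separately, with sufficiency split according to the two conditions.

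For sufficiency under condition (2), let $\Omega_i$ be a biased graph representing $M_i$ with $e_i$ an unbalanced loop at a vertex $v_i$, for $i=1,2$. I will form $\Omega$ by taking $\Omega_1 \dcup \Omega_2$, identifying $v_1$ with $v_2$ to a single vertex $v$, and deleting both loops $e_1,e_2$. The cycles of the underlying graph are exactly the cycles of $\Omega_1 \setminus e_1$ together with those of $\Omega_2 \setminus e_2$ (the two sides meet only at $v$, so no cycle crosses), and I retain their biases. To verify $F(\Omega) = M_1 \twosume{e_1}{e_2} M_2$, I match the two families of circuits: circuits of the 2-sum of the first type (circuits of $M_i$ avoiding $e_i$) are preserved in $\Omega$ unchanged, while those of the second type $(C_1 \cup C_2)\setminus\{e_1,e_2\}$ arise from tight or loose handcuffs in $\Omega_i$ using $e_i$, and upon removing $e_i$ and gluing at $v$ become precisely the handcuff circuits of $F(\Omega)$ whose two unbalanced cycles straddle $v$.

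For sufficiency under condition (1), I may assume $M_1$ is graphic, with graph $G_1$ and $e_1 = uv$ a link. Since $M_2$ is connected on at least two elements, in any biased graph $\Omega_2$ representing $M_2$ the element $e_2$ is either an unbalanced loop or a link. If it is an unbalanced loop, then pinching $u$ and $v$ of $G_1$ (using the pinching construction in Section \ref{sec:Preliminaries}) produces a representation of $M_1$ with $e_1$ an unbalanced loop, so both matroidals are frame and the previous paragraph applies. Otherwise $e_2 = xy$ is a link in every representation; I will construct $\Omega$ by identifying $u \leftrightarrow x$ and $v \leftrightarrow y$ in $(G_1 \setminus e_1) \dcup (\Omega_2 \setminus e_2)$, declaring a cycle $C$ of $\Omega$ balanced iff it lies in $G_1 \setminus e_1$, or it lies in $\Omega_2 \setminus e_2$ and is balanced there, or it decomposes as $P_1 \cup P_2$ with $P_1$ a $u$-$v$ path in $G_1 \setminus e_1$ and $P_2$ an $x$-$y$ path in $\Omega_2 \setminus e_2$ for which $P_2 \cup e_2$ is balanced in $\Omega_2$. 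The theta property for $\Omega$ then follows by a case analysis on how a theta subgraph meets the two sides, reducing to the theta property in $\Omega_2$, and circuit-matching with $M_1 \twosume{e_1}{e_2} M_2$ is direct.

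For necessity, suppose $M = M_1 \twosume{e_1}{e_2} M_2$ is frame and let $\Omega$ represent it. By Theorem \ref{thm:2sep_iff_2sum}, $(X,Y) := (E(M_1)\setminus e_1, E(M_2)\setminus e_2)$ is a 2-separation of $M$; since $M$ is connected, so is $\Omega$, and equation (\ref{eqn:matroid_graph_conn}) with $\lambda_M(X,Y)=2$ restricts the tuple $(\lambda_\Omega(X,Y), b(X), b(Y), b(E))$ to a short list of configurations. I plan to show: (i) if $\Omega$ is balanced, both $M_i$ are graphic, giving condition (1); (ii) if $\Omega$ is unbalanced and exactly one of $\Omega[X], \Omega[Y]$ is balanced, then the separator in $\Omega$ has size two, and re-inserting $e_i$ as an edge across this pair of vertices recovers a graphic representation of the corresponding $M_i$, again giving (1); and (iii) if both $\Omega[X]$ and $\Omega[Y]$ are unbalanced, the separator is a single cut-vertex $v$, and adding an unbalanced loop $e_i$ at $v$ to each side produces a biased graph representation of $M_i$ with $e_i$ an unbalanced loop, giving condition (2). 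The main obstacle is the verification in case (iii): one must check that circuits of $M_i$ through $e_i$ correspond bijectively to the tight or loose handcuffs in the extended biased graph using the added loop. Any residual configurations that do not fall cleanly into (i)--(iii) (for instance, when a side has a disconnected biased subgraph) will be normalised using the roll-up and twisted-flip operations of Section \ref{sec:Preliminaries} before invoking one of the three canonical cases.
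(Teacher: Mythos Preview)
Your sufficiency argument is essentially the paper's: you are describing the loop-sum and link-sum constructions of Proposition~\ref{prop:biasedgraph_twosums}, and this part is fine.

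The necessity argument has a genuine gap. Your trichotomy (i)--(iii) does not cover the case where $\Omega$ is unbalanced yet both $\Omega[X]$ and $\Omega[Y]$ are balanced. When both sides are connected and balanced, equation~(\ref{eqn:matroid_graph_conn}) gives $\lambda_\Omega(X,Y)=3$: this is exactly the type~3(a) biseparation of Figure~\ref{fig:types_of_biseparations}, and it occurs. In this situation neither side is graphic on the nose (the crossing cycles carry the unbalance), and there is no cut vertex at which to hang loops, so none of your cases applies. The paper disposes of type~3 in Theorem~\ref{thm:tamebisep} by ad~hoc constructions --- splitting and re-pinching vertices, and in the fully-contrabalanced crossing case building a new signed graph by hand --- none of which is a roll-up or a twisted flip. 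Your proposal to ``normalise residual configurations using roll-up and twisted-flip'' will not reach these constructions.

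A second, related issue: when $\Omega[X]$ or $\Omega[Y]$ is disconnected, the analysis of ``parts'' in Lemma~\ref{lem:tamebisep} shows five distinct shapes can arise (Figure~\ref{fig:structures_of_biseparations}), and the paper reduces them to types~1--3 by a series-contraction and path-reordering trick, not by roll-up or twisted flip. You have flagged this as an obstacle but your proposed tool is the wrong one; you would need to supply an argument of comparable substance to Lemma~\ref{lem:tamebisep} before the clean cases (i)--(iii) are available.
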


We prove a more general statement than Theorem \ref{thm:biased2sum}, giving necessary and sufficient conditions for a 2-sum of two frame matroidals to be frame.  
This more general result will be required in Section \ref{sec:Excluded_minors}.  
The statement and its proof will be given after the following necessary preliminaries.

\subsection{2-summing biased graphs}

Let $\Omega_1, \Omega_2$ be biased graphs and let $e_i \in E(\Omega_i)$ for $i=1,2$.  
There are two ways in which we may perform a biased graphical 2-sum operation on $\Om_1$ and $\Om_2$ to obtain a biased graph representing the 2-sum $F(\Om_1) \twosume{e_1}{e_2} F(\Om_2)$.  

\begin{enumerate}
\item Suppose that $e_i$ is an unbalanced loop in $\Om_i$ incident with vertex $v_i$, for $i \in \{1, 2\}$.  
The \emph{loop-sum} of $\Omega_1$ and $\Omega_2$ \emph{on} $e_1$ and $e_2$ is the biased graph obtained from the disjoint union of $\Omega_1 - e_1$ and $\Omega_2 - e_2$ by identifying vertices $v_1$ and $v_2$.  
Every cycle in the loop-sum is contained in one of $\Om_1$ or $\Om_2$; its bias is defined accordingly.  

\item Suppose that $\Om_1$ is balanced, and that $e_i$ is a link in $\Omega_i$ incident with vertices $u_i, v_i$, for $i \in \{1,2\}$.  
The \emph{link-sum} of $\Omega_1$ and $\Omega_2$ \emph{on} $e_1$ and $e_2$ is the biased graph obtained from the disjoint union of $\Omega_1 - e_1$ and $\Omega_2 - e_2$ by identifying $u_1$ with $u_2$ and $v_1$ with $v_2$.  
A cycle in the link-sum is balanced if it is either a balanced cycle in $\Omega_1$ or $\Omega_2$ or if it may be written as a union $(C_1 \setminus e_1) \cup (C_2 \setminus e_2)$ where for $i \in \{1,2\}$, $C_i$ is a balanced cycle in $\Omega_i$ containing $e_i$.   
(It is straightforward to verify that the theta rule is satisfied by this construction.)  
\end{enumerate}

\begin{prop} \label{prop:biasedgraph_twosums}
Let $\Omega_1, \Omega_2$ be biased graphs and let $e_i \in E(\Omega_i)$ for $i \in \{1,2\}$.  
If $\Omega$ is a loop-sum or link-sum of $\Omega_1$ and $\Omega_2$ on $e_1$ and $e_2$, then $F(\Omega) = F(\Omega_2) \twosume{e_1}{e_2} F(\Omega_2)$.
\end{prop}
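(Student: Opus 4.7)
The plan is to verify Proposition~\ref{prop:biasedgraph_twosums} by directly comparing the circuits of $F(\Omega)$ with those of the matroid 2-sum $M_1 \oplus_2^{e_1,e_2} M_2$, where $M_i = F(\Omega_i)$. Recall that every circuit of $F(\Omega_i)$ has one of the four types (1)--(4) listed in the introduction, and every circuit of the 2-sum is either a circuit of some $M_i$ avoiding $e_i$, or else of the form $(C_1 \cup C_2)\setminus\{e_1,e_2\}$ where $C_i$ is a circuit of $M_i$ containing $e_i$. So the task reduces to a direct translation between edge sets.

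\medskip

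\noindent\emph{Loop-sum case.} Here the identified vertex $v$ is a cut vertex of $\Omega$, so every cycle and every theta subgraph of $\Omega$ lies entirely in one side $\Omega_i-e_i$. Hence circuits of $F(\Omega)$ of types (1) and (4), together with handcuffs entirely contained in a single side, are exactly the circuits of $F(\Omega_i)$ avoiding $e_i$ (using that the loop-sum preserves the biases of cycles within each side). The only circuits of $F(\Omega)$ that cross the two sides are handcuffs, which must straddle through $v$. A straddling tight handcuff consists of unbalanced cycles $C_A\subseteq \Omega_1-e_1$ and $C_B\subseteq \Omega_2-e_2$ both through $v$; then $C_A\cup\{e_1\}$ is a tight handcuff in $\Omega_1$ (since $e_1$ is an unbalanced loop at $v_1$), and similarly for $C_B\cup\{e_2\}$. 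A straddling loose handcuff decomposes analogously as a loose handcuff on each side, using $e_i$ as the loop end of the handcuff. Reading both maps in reverse gives the desired bijection with the composite circuits of the 2-sum.

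\medskip

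\noindent\emph{Link-sum case.} Now $\{u,v\}$ is a vertex $2$-separator of $\Omega$, with $\Omega_1$ balanced, so $F(\Omega_1)=M(G_1)$ is graphic. Again, circuits of $F(\Omega)$ entirely inside one side are, by the definition of the link-sum's bias, exactly the circuits of $F(\Omega_i)$ avoiding $e_i$. Any circuit crossing the separator must use a $u$--$v$ path on each side (possibly trivial if both $u$ and $v$ are shared vertices of a single cycle/handcuff/theta; the standard case analysis splits by how many of the three paths of a theta lie on each side, and by how the two unbalanced cycles of a handcuff are distributed). The crucial point is that by the definition of the link-sum, a cycle traversing a $u$--$v$ path $P_1\subseteq\Omega_1-e_1$ and a $u$--$v$ path $P_2\subseteq\Omega_2-e_2$ is balanced in $\Omega$ iff $P_2\cup e_2$ is balanced in $\Omega_2$; and $P_1\cup e_1$ is automatically balanced in $\Omega_1$ since $\Omega_1$ is balanced. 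So each crossing circuit $C$ of $F(\Omega)$ decomposes canonically as $C=(C_1\cup C_2)\setminus\{e_1,e_2\}$, where $C_1=P_1\cup e_1$ is a (balanced) cycle of $G_1$ and $C_2$ is a circuit of $F(\Omega_2)$ containing $e_2$ (of type (1), (2), (3), or (4) depending on how $C$ crosses); conversely any such pair yields a circuit of $F(\Omega)$.

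\medskip

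The main obstacle is the link-sum case, where a careful but essentially mechanical case split by circuit type (1)--(4), cross-referenced with how many of the relevant paths lie on each side of the $2$-separator, is needed. The key point that makes the bookkeeping succeed is precisely the bias rule baked into the definition of the link-sum together with the hypothesis that $\Omega_1$ is balanced; with these in hand, verifying the circuit correspondence becomes a routine check.
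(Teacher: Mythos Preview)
Your approach is correct and is exactly the route the paper takes: the paper's entire proof is the one-line assertion that ``it is easily checked that for both the loop-sum and link-sum, the circuits of $F(\Omega)$ and of $F(\Omega_1) \twosume{e_1}{e_2} F(\Omega_2)$ coincide.'' Your write-up simply supplies more of the case analysis than the paper bothers to record.
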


\begin{proof}
It is easily checked that for both the loop-sum and link-sum, the circuits of $F(\Omega)$ and of $F(\Om_1) \twosume{e_1}{e_2} F(\Om_2)$ coincide, regardless of the choice of pairs of endpoints of $e_1$ and $e_2$ that are identified in the link-sum.  
\end{proof}

\subsection{Decomposing along a 2-separation} 

By Theorem \ref{thm:2sep_iff_2sum}, a matroid $M$ of connectivity 2 decomposes into two of its proper minors such that $M$ is a 2-sum of these smaller matroids.  
If $M$ is frame, then every minor of $M$ is frame, and we would like to be able to express the 2-sum in terms of a loop-sum or link-sum of two biased graphs representing these minors.  
This motivates the following definitions.  
Let $M$ be a connected frame matroid on $E$ and let $\Omega$ be a biased graph representing $M$.   
A 2-separation $(A,B)$ of $M$ is a \emph{biseparation of} $\Omega$.  
There are four types of biseparations that play key roles.  
Define a biseparation to be type 1, 2(a), 2(b), 3(a), 3(b), or 4, respectively, if it appears as in Figure \ref{fig:types_of_biseparations}, where each component of $\Om[A]$ and $\Om[B]$ is connected; components of each side of the separation marked ``b'' are balanced, those marked ``u'' are unbalanced.  
We refer to a biseparation of type 2(a) or 2(b) as {type 2}, and a biseparation of type 3(a) or 3(b) as {type 3}.  

\begin{figure}[tbp] 
\begin{center} 
\includegraphics[scale=0.8]{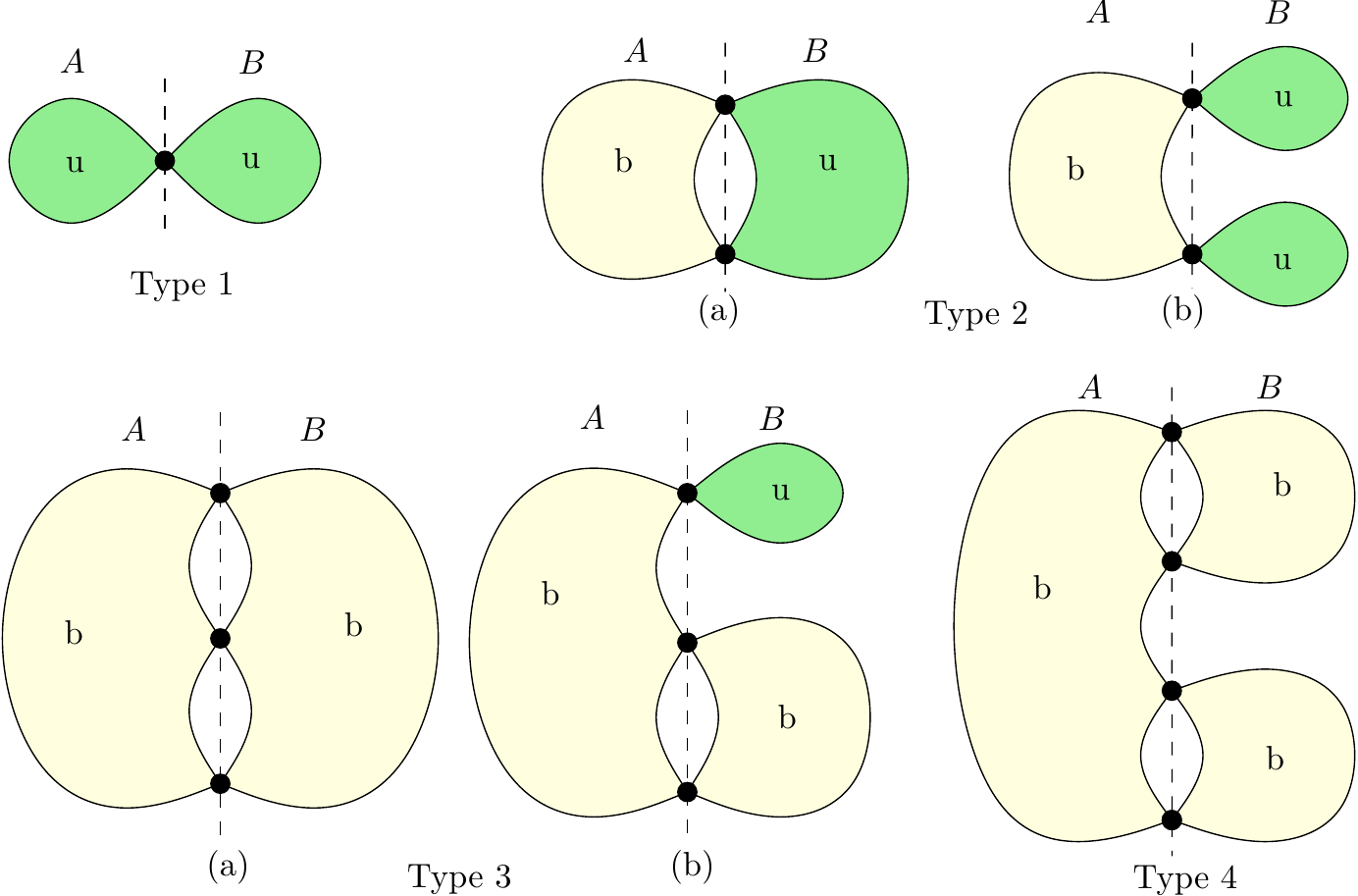}
\end{center} 
\caption{Four types of biseparations.}
\label{fig:types_of_biseparations}
\end{figure} 

\begin{prop} \label{prop:type1or2_bisep_is_link_loop_sum} 
Let $M$ be a connected frame matroid such that $M = M_1 \twosume{e_1}{e_2} M_2$ for two matroids $M_1, M_2$.  
Let $\Omega$ be a biased graph representing $M$, and let $E(M_i) \setminus \{e_i\} = E_i$ for $i \in \{1,2\}$.  
If $(E_1, E_2)$ is type 1 (resp.\ type 2), then there exist biased graphs $\Omega_i$ with $E(\Omega_i) = E(M_i)$, $i\in\{1,2\}$, such that $\Omega$ is the loop-sum (resp.\ link-sum) of $\Omega_1$ and $\Omega_2$ on $e_1$ and $e_2$.  
\end{prop}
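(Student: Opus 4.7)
The plan is to reverse-engineer the biased-graphical sum: build $\Om_1, \Om_2$ by attaching a new edge $e_i$ to $\Om[E_i]$ inside $\Om$, then check directly that $\Om$ is obtained as the prescribed sum.

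For type 1, both $\Om[E_1]$ and $\Om[E_2]$ are unbalanced and meet in a single vertex $v$ (from the connectivity calculations following (\ref{eqn:matroid_graph_conn}), a type 1 biseparation has $\lambda_\Om(E_1,E_2){=}1$). I would take $\Om_i$ to be $\Om[E_i]$ together with a new unbalanced loop $e_i$ incident with $v$. Adjoining an unbalanced loop creates no new cycle, so the theta property of $\Om[E_i]$ extends unchanged and $\Om_i$ is a valid biased graph with $E(\Om_i) = E(M_i)$. By the definition of loop-sum, $\Om$ is then precisely the loop-sum of $\Om_1$ and $\Om_2$ on $e_1,e_2$.

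For type 2, $\Om[E_1]$ and $\Om[E_2]$ meet in exactly two vertices $u,v$, and \wolog{} $\Om[E_1]$ is balanced. I would set $\Om_1 = \Om[E_1] \cup \{e_1\}$ where $e_1$ is a new link $uv$ and every cycle of $\Om_1$ through $e_1$ is declared balanced (so $\Om_1$ is balanced and the theta property is trivial), and $\Om_2 = \Om[E_2] \cup \{e_2\}$ where $e_2$ is a new link $uv$; for each $u$-$v$ path $P$ in $\Om[E_2]$ I would declare the cycle $e_2 \cup P$ to be balanced \iiff{} $P_1 \cup P$ is balanced in $\Om$, where $P_1$ is some chosen $u$-$v$ path in $\Om[E_1]$.

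The subtle step is verifying this bias on $\Om_2$ is well-defined and respects the theta property. If $P_1, P_1'$ are two $u$-$v$ paths in $\Om[E_1]$, Observation \ref{state:rerouting_paths} transforms $P_1$ into $P_1'$ by a sequence of reroutings internal to $\Om[E_1]$; since $\Om[E_1]$ is balanced, each such reroute is along a balanced cycle, so by Lemma \ref{lem:rerouting_along_a_bal_cycle} the bias of $P_1 \cup P$ is preserved at every step. The theta property for $\Om_2$ then reduces to that of $\Om$: for a theta of $\Om_2$ containing $e_2$ with paths $e_2, Q_2, Q_3$, the biases of $e_2 \cup Q_j$ equal those of $P_1 \cup Q_j$ in $\Om$, and $Q_2 \cup Q_3$ inherits its bias directly from $\Om$, so the three biases match those of the theta $P_1, Q_2, Q_3$ of $\Om$. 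Finally I would check that the link-sum of $\Om_1$ and $\Om_2$ on $e_1, e_2$, identifying the two copies of $u$ and the two copies of $v$, equals $\Om$: cycles wholly on one side agree by construction, and any cycle of $\Om$ crossing the separation decomposes as $P_1 \cup P_2$ with $P_i$ a $u$-$v$ path in $\Om[E_i]$; in the link-sum this is balanced iff both $e_1 \cup P_1$ and $e_2 \cup P_2$ are balanced, which since $\Om_1$ is balanced reduces to ``$e_2 \cup P_2$ is balanced'', i.e., $P_1 \cup P_2$ is balanced in $\Om$. The main obstacle is the well-definedness and theta check for $\Om_2$; both hinge on the rerouting argument made possible by the balance of $\Om[E_1]$.
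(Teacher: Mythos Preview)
Your proposal is correct and follows essentially the same approach as the paper's proof: build $\Om_i$ by replacing the ``other side'' with a single new edge $e_i$ (an unbalanced loop for type~1, a link for type~2), declare $\Om_1$ balanced and define biases in $\Om_2$ through $e_2$ by substituting a $u$-$v$ path from the balanced side, and invoke the rerouting lemma (Lemma~\ref{lem:rerouting_along_a_bal_cycle}) for well-definedness. The paper's proof is terser, citing only Lemma~\ref{lem:rerouting_along_a_bal_cycle} and leaving the theta-property verification for $\Om_2$ and the final check that the link-sum recovers $\Om$ implicit; you have usefully spelled these out.
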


\begin{proof} 
If $(E_1, E_2)$ is type 1, then for $i \in \{1,2\}$ let $\Om_i$ be the biased graph obtained from $\Om$ by replacing $\Om[E_{i+1}]$ with an unbalanced loop $e_i$ incident to the vertex in $V(E_1) \cap V(E_2)$ (adding indices modulo 2).  
Then $\Om$ is the loop-sum of $\Om_1$ and $\Om_2$ on $e_1$ and $e_2$.  

Now suppose $(E_1, E_2)$ is type 2.  
Let $V(E_1) \cap V(E_2) = \{x,y\}$, and assume without loss of generality that $\Om[E_1]$ is balanced while $\Om[E_2]$ is unbalanced.  
For $i \in \{1, 2\}$ let $\Om_i$ be the biased graph obtained from $\Om$ by replacing $\Om[E_{i+1}]$ with a link $e_i = xy$ and defining bias as follows: $\Om_1$ is balanced, while 
the balanced cycles of $\Om_2$ are precisely those not containing $e_2$ that are balanced in $\Om$ together with those cycles containing $e_2$ for which replacing $e_2$ with an $x$-$y$ path in $\Om[E_1]$ yields a balanced cycle in $\Om$ (Lemma \ref{lem:rerouting_along_a_bal_cycle} guarantees that this collection is well-defined).  
Then $\Om$ is the link-sum of $\Om_1$ and $\Om_2$ on $e_1$ and $e_2$.  
\end{proof}

\subsubsection{Taming biseparations}

In light of Proposition \ref{prop:type1or2_bisep_is_link_loop_sum}, we want to show that for every 2-separation of a frame matroid $M$, there exists a biased graph representing $M$ for which the corresponding biseparation is type 1 or 2.  
We first show that there is always such a representation in which the biseparation is type 1, 2, or 3.    
In preparation for the more general form of Theorem \ref{thm:biased2sum} we wish to prove, we now consider matroidals.  
We say a matroidal $\Mm{=}(M,L)$ is connected if $M$ is connected.  

\begin{lem} \label{lem:tamebisep}
Let $\Mm{=}(M,L)$ be a connected frame matroidal.  
For every 2-separation $(A,B)$ of $M$, there exists an $L$-biased representation of $\Mm$ for which $(A,B)$ is type 1, 2, or 3.
\end{lem}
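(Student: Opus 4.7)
The plan is to start from an arbitrary $L$-biased representation $\Om$ of $\Mm$, and to show that if the biseparation $(A,B)$ is of type 4 in $\Om$, then we can produce an alternative $L$-biased representation $\Om'$ of $\Mm$ in which $(A,B)$ is of type 1, 2, or 3.

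First I would use the connectivity identity~\eqref{eqn:matroid_graph_conn} together with $\lambda_M(A,B)=2$ and the fact that $\Om$ is connected (since $M$ is, by the observation preceding Observation~\ref{obs:an_ex_min_is_connected}) to enumerate the possible tuples $(\lambda_\Om(A,B),\,b(A),\,b(B),\,b(E))$ compatible with a matroid 2-separation.  The six biseparation types are then read off directly as the cases of this enumeration, and type 4 is singled out as the configuration with the largest graph-cut order for the prescribed matroid cut order---typically both sides balanced, $\Om$ unbalanced, and $\lambda_\Om(A,B)=3$, so that $b(A)=b(B)=1$ and $b(E)=0$.

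Next I would argue that such a type-4 structure always admits a flip that moves it to a lower type.  Because $\Om[A]$ and $\Om[B]$ are each balanced while $\Om$ as a whole is unbalanced, every unbalanced cycle of $\Om$ must cross the cut in at least two of the attachment vertices; I would analyse the resulting bias pattern at the attachment vertices via Proposition~\ref{prop:If_no_odd_theta} applied to suitable theta subgraphs on each side, locate a vertex (or pair of vertices) at which the imbalance is concentrated, and apply a twisted flip (Theorem~\ref{prop:flip_operation}) or one of its special cases (roll-up, pinch, split) there.  This yields $\Om'$ with $F(\Om')\iso F(\Om)=M$ and a biseparation $(A,B)$ whose parameters are strictly simpler---either $\lambda_{\Om'}(A,B)$ has decreased, or the balance status of one side has changed---putting $(A,B)$ into type 1, 2, or 3.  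To preserve the $L$-biased property, any unbalanced loops in $L$ incident with the flip vertex can first be unrolled into a new b-class, carried through the flip as ordinary edges, and then re-rolled as unbalanced loops at the appropriate vertex of $\Om'$.

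The main obstacle I anticipate is the case analysis in the second step: the type-4 configuration may split into subcases depending on the number and balance of the components on each side, and in each subcase one must pick the right vertex and the right set of lobes so that the flip genuinely produces a non-type-4 biseparation rather than shifting to another type-4 configuration.  A clean way to bypass this bookkeeping is to choose $\Om$ extremally among $L$-biased representations of $\Mm$---for instance minimising $\lambda_\Om(A,B)$, or, subject to that, minimising the number of unbalanced components straddling the cut---so that the mere existence of a flip reducing the chosen parameter contradicts the extremal choice, forcing $(A,B)$ in $\Om$ to already be of type 1, 2, or 3.
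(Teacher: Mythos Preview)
Your proposal misidentifies what a type-4 biseparation is. The configuration you describe---both $\Om[A]$ and $\Om[B]$ connected and balanced, $\Om$ unbalanced, and $\lambda_\Om(A,B)=3$---is precisely type 3(a), not type 4. The enumeration you sketch via \eqref{eqn:matroid_graph_conn} implicitly assumes that $\Om[A]$ and $\Om[B]$ are each connected, and under that assumption types 1, 2, and 3 already exhaust the possibilities. The actual content of the lemma, and the reason it is nontrivial, is that in an arbitrary $L$-biased representation the subgraphs $\Om[A]$ and $\Om[B]$ can each have \emph{many} components; the general biseparation is one in which these components interleave so that neither side is connected, and this is what has to be tamed.

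The paper's argument is accordingly quite different from what you outline. A counting argument (doubling the rank identity and distributing $|S|$ over the parts) shows that, apart from at most two ``non-neutral'' pieces, every component of $\Om[A]$ and $\Om[B]$ is balanced and meets $S=V(A)\cap V(B)$ in exactly two vertices. Each such neutral part is then replaced by a single edge via Proposition~\ref{prop:type1or2_bisep_is_link_loop_sum}; the resulting edges are pairwise in series in the reduced matroid, so one may contract all but one, re-subdivide that edge into a path, and order the path so that all edges corresponding to $A$-parts precede those corresponding to $B$-parts, before link-summing the neutral parts back on. The resulting representation has at least one of $\Phi[A]$, $\Phi[B]$ connected with $|V(A)\cap V(B)|\le 3$, hence type 1, 2, or 3. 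No twisted flip is used; the essential mechanism is this rearrangement of series elements, which your approach does not contemplate and which a local flip at a single vertex cannot accomplish.
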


\begin{proof}
Choose an $L$-biased representation $\Omega$ of $(M,L)$ for which $\Omega$ is not balanced (any balanced representation can be turned into an unbalanced one by a pinch or roll-up operation, so this is always possible).  
Let $S = V(A) \cap V(B)$ in $\Om$.  
Let $\{A_1, \ldots, A_h\}$ be the partition of $A$ and $\{B_1, \ldots, B_k\}$ the partition of $B$ so that every $\Omega[A_i]$ is a component of the biased graph $\Omega[A]$ and every $\Omega[B_j]$ is a component of the biased graph $\Omega[B]$.  
Call the graphs $\Omega[A_1], \ldots, \Omega[A_h], \Omega[B_1], \ldots, \Omega[B_k]$ \emph{parts}.   
For every $1 \le i \le h$ (resp.\ $1 \le j \le k$) let $\delta_A^i = 1$ ($\delta_B^j=1$) if $\Omega[A_i]$ is balanced ($\Omega[B_j]$ is balanced) and $\delta_A^i = 0$ ($\delta_B^j = 0$) otherwise.  
Then $\lambda_M(A,B) = 2 = 1 + |S| - \sum_{i=1}^h \delta_A^i - \sum_{j=1}^k \delta_B^j$.  
Since each vertex in $S$ is in exactly one $\Omega[A_i]$ and exactly one $\Omega[B_j]$, doubling both sides of this equation and rearranging, we  obtain 
\[ 2 = \sum_{i=1}^h \br{ |S \cap V(A_i)| - 2\delta_A^i } + \sum_{j=1}^k \br{ |S \cap V(B_j)| - 2\delta_B^j } . \]
If a part is balanced, it must contain at least two vertices in $S$ (else $M$ is not connected by the discussion in Section \ref{sec:standardnotionsbiasedgraphsminorsconnectivity}), so every term in the sums on the right hand side of the above equation is nonnegative.  
In particular, letting $t$ be the number of vertices in $S$ contained in a part, a balanced part will contribute $t-2$ to the sum, and an unbalanced part will contribute $t$.  
Call a part \emph{neutral} if it is balanced and contains exactly two vertices in $S$.  
Since the total sum is two, the possibilities for the parts of $\Om[A]$ and $\Om[B]$ are: 
\begin{enumerate} 
\item[(a)]  two unbalanced parts each with one vertex in $S$ and all other parts neutral, 
\item[(b)]  one unbalanced part with two vertices in $S$ and all other parts neutral, 
\item[(c)]  one balanced part with three vertices in $S$, one unbalanced part with one vertex in $S$, and all other parts neutral, 
\item[(d)]  two balanced parts with three vertices in $S$ and all other parts neutral, or 
\item[(e)]  one balanced part with four vertices in $S$ and all other parts neutral.  
\end{enumerate}  
These possibilities are illustrated in Figure \ref{fig:structures_of_biseparations}.  

\begin{figure}[tbp] 
\begin{center} 
\includegraphics[scale=0.8]{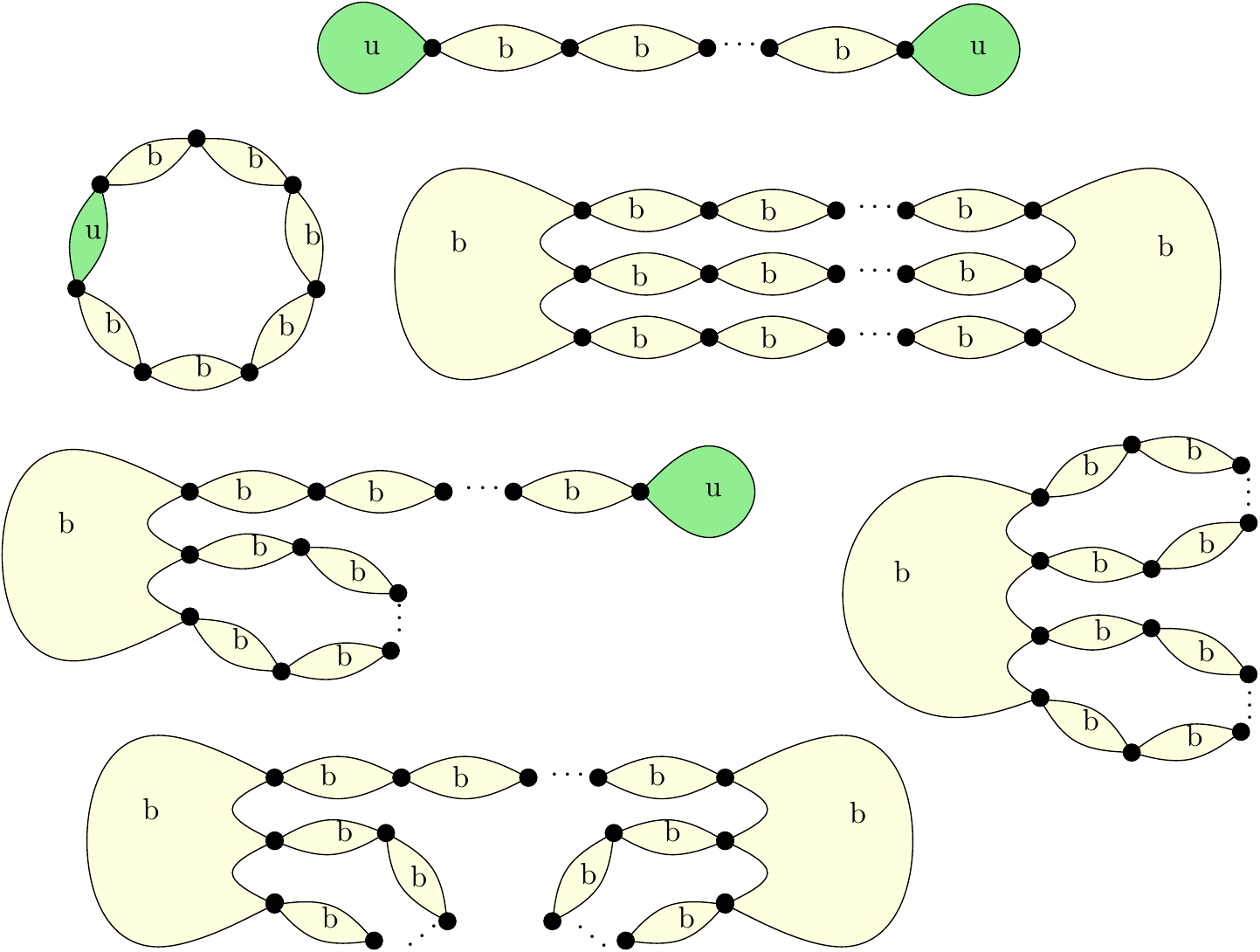}
\end{center} 
\caption[Possible decompositions of $\Om$ into the parts]{The possible decompositions of $\Om$ into the parts of $\Om[A]$ and $\Om[B]$.  } 
\label{fig:structures_of_biseparations}
\end{figure} 

Observe that every component of $M \setminus B$ (resp.\ $M \setminus A$) is contained in some part $\Om[A_i]$ ($\Om[B_j]$), and every part of $\Om[A]$ (resp.\ $\Om[B]$) is a union of components of $M \setminus B$ (resp.\ $M \setminus A$).  
Hence every circuit of $M$ is either contained in a single part, or traverses every part.  
It is an elementary property of 2-separations that if $A_1, \ldots, A_l$ and $B_1, \ldots, B_m$ are the components of $M \setminus B$ and $M \setminus A$ respectively, and $(X,Y)$ is any partition of $A_1, \ldots, A_l, B_1, \ldots, B_m$, then $(\bigcup X, \bigcup Y)$ is a 2-separation of $M$ (this can be verified by straightforward rank calculations).  
Hence if $\Om[D]$ is a neutral part, $(D, D^c)$ is a 2-separation of $M$.  Since $\Om[D]$ is balanced and connected, the biseparation $(D,D^c)$ of $\Om$ is type 2.  

Suppose there are exactly $t$ neutral parts.  
Repeatedly applying Proposition \ref{prop:type1or2_bisep_is_link_loop_sum}, we obtain a biased graph $\Om'$ with links $e_1', \ldots, e_t'$ together with balanced biased graphs $\Om_1, \ldots, \Om_t$ each with a distinguished edge $e_i \in E(\Omega_i)$ so that $\Om$ is obtained as a repeated link-sum of $\Om'$ with each $\Om_i$ on edges $e_i$ and $e_i'$.  
It follows from the fact that every circuit of $M$ is either contained in a single part or traverses every part that elements $e_1', \ldots, e_t'$ are all in series in $F(\Om')$.  
We use this fact to find another biased graph representing $M$ in which the biseparation $(A,B)$ is type 1, 2, or 3.  
First, in $\Om'$ contract edges $e_1', \ldots, e_{t-1}'$: let $\Om'' = \Om' / \{e_1', \ldots, e_{t-1}'\}$.  
Now subdivide link $e_t'$ to form a path $P$ with edge set $\{e_1', \ldots, e_t'\}$ to obtain a new biased graph $\Psi$, in which a cycle containing $P$ is balanced \iiff the corresponding cycle in $\Om''$ containing $e_t'$ is balanced.  
Since elements $e_1', \ldots, e_t'$ are in series in $F(\Om')$, $F(\Psi) \iso F(\Om')$.  
For the same reason, any biased graph $\Psi'$ obtained from $\Psi$ by permuting the order in which edges $e_1', \ldots, e_t'$ occur in $P$ has $F(\Psi') \iso F(\Psi)$.  
Let $\Phi'$ be the biased graph obtained from $\Psi$ by arranging the edges of $P$ in an order so that an initial segment of the path has all of the edges $e_i'$ whose corresponding neutral parts of $\Om$ are in $A$, followed by the edges $e_i'$ whose corresponding neutral parts are in $B$.  
Now let $\Phi$ be the biased graph obtained by repeatedly link-summing each $\Om_i$ on edge $e_i'$, $i \in \{1, \ldots, t\}$.  
Then $F(\Phi) \iso F(\Om)$.  
Since every unbalanced loop in $\Om$ remains an unbalanced loop in $\Phi$, $\Phi$ is an $L$-biased representation of $\Mm$.  
Since at least one of $\Phi[A]$ or $\Phi[B]$ is connected, and $\Phi[A]$ and $\Phi[B]$ meet in at most three vertices, in $\Phi$ biseparation $(A,B)$ is type 1, 2, or 3.  
\end{proof}

\subsubsection{Taming type 3}

We now do away with type 3 biseparations.  

\begin{thm} \label{thm:tamebisep}
Let $\Mm{=}(M,L)$ be a connected frame matroidal.  
For every 2-separation $(A,B)$ of $M$, there exists an $L$-biased representation of $\Mm$ for which $(A,B)$ is type 1 or 2.
\end{thm}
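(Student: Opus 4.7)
The plan is to begin with the representation furnished by Lemma \ref{lem:tamebisep}, which gives an $L$-biased representation $\Om$ of $\Mm = (M,L)$ in which $(A,B)$ is of type 1, 2, or 3. If it is type 1 or 2 we are done, so assume it is type 3. In this case $|V(A) \cap V(B)| = 3$ and at least one side of the biseparation is balanced; without loss of generality $\Om[A]$ is balanced, with $V(A) \cap V(B) = \{x, y, z\}$. A balanced biased graph contains no unbalanced loops, so every element of $L$ lies on the $B$-side, and any modification of $\Om$ supported inside $\Om[A]$ automatically preserves the condition that all elements of $L$ remain unbalanced loops.

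The core idea is to apply a twisted flip (Theorem \ref{prop:flip_operation}) concentrated on the balanced side $\Om[A]$ in order to reduce the number of attachment vertices from three to two. Because $\Om[A]$ is balanced, Proposition \ref{prop:If_no_odd_theta} implies it is a signed graph with empty signature; combined with Observation \ref{obs:relabelling_when_there_is_a_balancing_vertex} applied on $\Om[B]$, this yields a collection $\mathbf{\Sigma}$ on $\Om$ with $\Bb = \Bb_{\mathbf{\Sigma}}$ and $\Sigma_i \cap E(A) = \emptyset$ for every $i$. One then sets up a twisted flip with pivot vertex $u = x$, lobe subgraphs given by the blocks of $\Om[A]$ through $x$ (together with $\Om[B]$ playing the role of $G_0$), and distinguished vertices $x_i \in \{y, z\}$ inside each lobe. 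Since $E(A)$ meets no signature class, conditions (1)--(5) of the twisted flip are satisfied, and Theorem \ref{prop:flip_operation} yields a biased graph $\Om'$ with $F(\Om') \cong F(\Om) = M$. Geometrically, all edges of $\Om[A]$ formerly incident with $x$ are relocated to $y$ (or $z$), so in $\Om'$ we have $V(A) \cap V(B) = \{y, z\}$.

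A short computation using the connectivity formula (\ref{eqn:matroid_graph_conn}) then classifies the new biseparation of $(A,B)$ in $\Om'$ as type 1 or type 2, possibly after a final pinch or roll-up that adjusts the bias pattern of $\Om'[A]$. Since every unbalanced loop of $\Om$ remains an unbalanced loop of $\Om'$, the new representation is still $L$-biased. The main obstacle I anticipate is the case analysis needed when the three attachment vertices $x, y, z$ do not all lie in a single block of $\Om[A]$: in that situation condition (3) of the twisted flip --- that distinct lobes meet only at the pivot and at the single distinguished attachment vertex --- forces the flip to be performed block-by-block, with the distinguished vertex chosen carefully inside each block, and one may need to appeal to Proposition \ref{prop:vertex_splitting_operation} to rewrite $\Om[A]$ in a more convenient form beforehand. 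The subcase in which $\Om[B]$ is also balanced (type 3(b)) requires the further observation that one may first pinch or roll-up a vertex of $\Om[B]$ to make it unbalanced, reducing the two subtypes of type 3 to a uniform setting.
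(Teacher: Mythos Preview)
Your approach diverges substantially from the paper's, and the twisted-flip setup has a genuine gap.

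The core problem is condition~(3) of the twisted flip. You take $G_0 = \Om[B]$ and the lobes $G_1,\ldots,G_m$ to be blocks of $\Om[A]$ through the pivot $u=x$. Condition~(3) demands that each $G_i$ meet $\bigcup_{j\neq i}V(G_j)$ only in $\{u,x_i\}$ for a single vertex $x_i$. But $\Om[A]$ meets $\Om[B]$ at all three of $x,y,z$. If $\Om[A]$ is $2$-connected as a graph --- and in type~3(a) it is simply an arbitrary connected balanced graph with three boundary vertices, so nothing forbids this --- then $\Om[A]$ is a single block, and taken as one lobe it meets $G_0$ at three vertices. Your anticipated obstacle is stated backwards: the difficulty is precisely when $x,y,z$ \emph{do} lie in one block of $\Om[A]$, not when they do not. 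Taking blocks of $\Om[A]$ at $x$ only helps when $x$ already separates $y$ from $z$ in $\Om[A]$, which there is no reason to expect.

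There are secondary issues. Condition~(2) requires $E(G_0)\cap\Sigma_i=\emptyset$ for every $i$; but if $\Om[B]$ is unbalanced (as in type~3(b)), any signature realising $\Bb$ must meet $E(B)$, since $\Om[A]$ is balanced and carries none of the signature. Your appeal to Observation~\ref{obs:relabelling_when_there_is_a_balancing_vertex} on $\Om[B]$ presupposes that $\Om[B]$ has a balancing vertex, which is not given; more fundamentally, $\Om$ need not be a $k$-signed graph at all. (You also have the subtypes reversed: type~3(a) is the case where both sides are balanced, while type~3(b) has $\Om[B]$ disconnected with an unbalanced component.)

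The paper's argument is quite different and does not use twisted flips here. It first shows, via rerouting and Lemma~\ref{lem:rerouting_along_a_bal_cycle}, that all cycles crossing $(A,B)$ through a fixed pair from $\{x,y,z\}$ share a common bias, giving three classes $\Cc_{xy},\Cc_{xz},\Cc_{yz}$. It then case-splits on how many of these classes are unbalanced (at least one must be). In each case an explicit, ad~hoc construction is given: if exactly one class is unbalanced the biseparation is forced to be type~3(b), and pinching two vertices of $\Om[A]$ and reattaching the pieces of $\Om[B]$ gives type~1; if exactly two are unbalanced then $M$ is graphic, and a vertex split followed by a pinch gives type~1; if all three are unbalanced one splits $y$ and $z$ along the separation and re-identifies crosswise, again obtaining type~1. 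In each construction the $L$-biasedness is checked directly.
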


\begin{proof}
By Lemma \ref{lem:tamebisep} we may choose an $L$-biased graph $\Omega$ representing $M$ in which biseparation $(A, B)$ is type 1, 2, or 3.  
Suppose it is type 3.  
Let $\{x,y,z\} = V(A) \cap V(B)$ in $\Om$.  
We first claim that all cycles crossing $(A,B)$ through the same pair of vertices $\{x,y\}$, $\{y,z\}$, or $\{z,x\}$ have the same bias.  To see this, let $C$ and $C'$ be two cycles crossing $(A,B)$ at $\{x,y\}$.  We may assume without loss of generality that $\delta(z) \cap C \subseteq A$.  Let $C \cap A = P$ and $C \cap B = Q$, and 
let $C' \cap A = P'$ and $C' \cap B = Q'$.  
By Observation \ref{state:rerouting_paths}, $P$ may be transformed to $P'$ by a sequence of reroutings in $P \cup P'$.  
Since every rerouting in this sequence is along a balanced cycle, by Lemma \ref{lem:rerouting_along_a_bal_cycle}, $C$ and $P' \cup Q$ have the same bias.  
Similarly, $Q$ may be transformed into $Q'$ via a sequence of reroutings along balanced cycles in $Q \cup Q'$, so $P' \cup Q$ and $P' \cup Q' = C'$ have the same bias.  \Ie, $C$ and $C'$ are of the same bias.  

There are  three types of cycles crossing the 2-separation: those crossing at $\{x,y\}$, those crossing at $\{x,z\}$, and those crossing at $\{y,z\}$; by the claim, all cycles of the same type have the same bias.  
Let us denote the sets of these cycles by $\Cc_{xy}$, $\Cc_{xz}$ and $\Cc_{yz}$, respectively.  

We claim that at least one of these sets contains an unbalanced cycle.  
For suppose the contrary.  
If the biseparation of $\Om$ is type 3(a), then $\Om$ is balanced with $|V(A) \cap V(B)|=3$; but then $(A,B)$ is not a 2-separation of $F(\Om)$, a contradiction.  
If the biseparation is type 3(b), then $M$ is not connected, a contradiction.   

Suppose first that just one of our sets of cycles, say $\Cc_{xy}$, contains an unbalanced cycle $C$.  
Suppose further that in one of $\Om[A]$ or $\Om[B]$ there is a $z$-$C$ path $P$ avoiding $x$ and that in the other side there is a $z$-$C$ path $Q$ avoiding $y$.   
Then $C \cup P \cup Q$ is a theta subgraph of $\Om$ containing exactly two balanced cycles, a contradiction.  
So no such pair of paths exist.  
Hence either: 
\begin{enumerate} 
\item  both $\Om[A]$ and $\Om[B]$ contain a $z$-$C$ path, but either every $z$-$C$ path in both meets $x$ or every $z$-$C$ path in both meets $y$, or, 
\item  one of $\Om[A]$ or $\Om[B]$ has no $z$-$C$ path.  
\end{enumerate}
In case 1, either $x$ or $y$ is a cut vertex of $\Om$, and we find that $F(\Om)$ is not connected, a contradiction.  
Hence we have case 2.  Suppose \wolog{} that $\Om[B]$ does not contain a $z$-$C$ path.  
We have a biseparation of type 3(b).  
Let us denote by $B_1$ the balanced component and by $B_2$ the unbalanced component of biased graph $\Om[B]$.  
Let $\Phi$ be the biased graph obtained as follows.  
Detach $B_2$ from $\Om[A]$, and form a signed graph $(G, \Bb_\Sigma)$ from $\Om[A]$ by identifying vertices $x$ and $y$, and setting $\Sigma = \delta(y) \cap A$.  
Now identify vertex $x$ in $B_1$ with vertex $z$ in $(G, \Bb_\Sigma)$, and identity vertex $y$ in $B_1$ with vertex $z$ in $B_2$ (Figure \ref{fig:doawaywithtype31}).  
Assign biases to cycles in $\Phi$ in $\Phi[A]$ according to their bias in $(G, \Bb_\Sigma)$ and in $\Phi[B]$ according to their bias in $\Om$.  
It is straightforward to verify that the circuits of $F(\Phi)$ and $F(\Om)$ coincide, so $F(\Phi) \iso M$.  
The biseparation $(A, B)$ in $\Phi$ is type 1, and since edges representing elements in $L$ remain unbalanced loops in $\Phi$, $\Phi$ is an $L$-bias representation of $M$ as required.  

\begin{figure}[tbp] 
\begin{center} 
\includegraphics[scale=0.8]{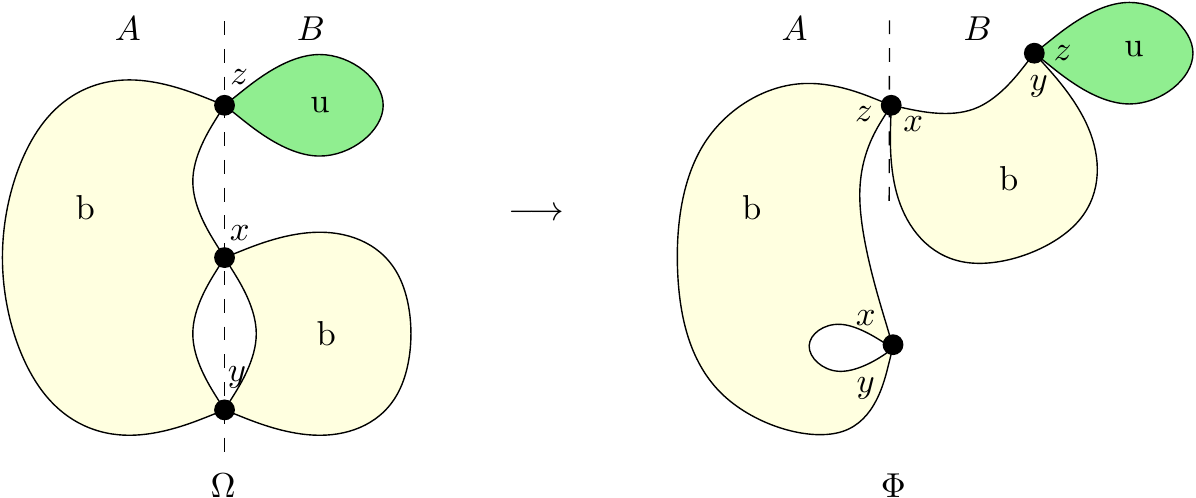}
\end{center} 
\caption{Finding a representation in which the biseparation if type 1.}
\label{fig:doawaywithtype31} 
\end{figure} 

So now assume that at least two of the three sets $\Cc_{xy}$, $\Cc_{yz}$ and $\Cc_{xz}$ contain an unbalanced cycle.  
Then our biseparation is type 3(a).  
If just two of these sets contain an unbalanced cycle | say $\Cc_{xz}$ does not | then $M$ is graphic, represented by the graph obtained from $\Om$ by splitting vertex $y$ (Figure \ref{fig:two_of_Cc_xy_Cc_yz_unbalanced}).  
Now pinching vertices $x$ and $z$ yields an $L$-biased graph representing $M$ in which biseparation $(A,B)$ is type 1.  

\begin{figure}[tbp]
\begin{center} 
\includegraphics[scale=0.8]{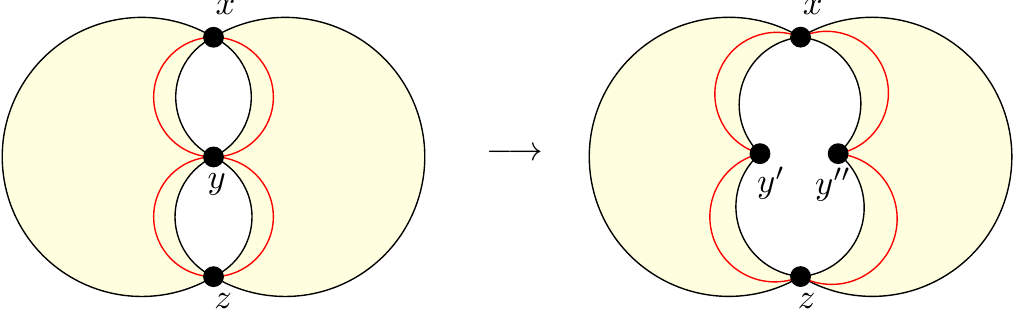} 
\end{center} 
\caption[If just $\Cc_{xy}$ and $\Cc_{yz}$ contain unbalanced cycles]{If just $\Cc_{xy}$ and $\Cc_{yz}$ contain unbalanced cycles, then $F(\Om)$ is graphic.}  
\label{fig:two_of_Cc_xy_Cc_yz_unbalanced} 
\end{figure} 

The remaining case is that all three of $\Cc_{xy}, \Cc_{xz}$, and $\Cc_{yz}$ contain unbalanced cycles, so every cycle crossing $(A,B)$ is unbalanced.  
In this case every circuit of $M$ contained in $A$ or $B$ is a balanced cycle and every circuit meeting both $A$ and $B$ is either a pair of tight handcuffs meeting at a vertex in $V(A) \cap V(B)$, or an contrabalanced theta (Figure \ref{fig:type3_to_type1_2separation}).  
\begin{figure}[tbp]
\begin{center}
\includegraphics[scale=0.8]{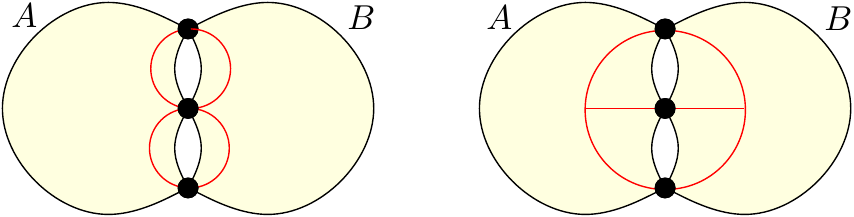}
\end{center}
\caption{Circuits of $F(\Om)$ meeting both sides of the 2-separation.}
\label{fig:type3_to_type1_2separation}
\end{figure}
Let $\Omega'$ be the signed graph obtained from $\Omega$ as follows.  
Split vertices $y$ and $z$, replacing $y$ with two new vertices $y'$ and $y''$, putting all edges $uy \in A$ incident with $y'$ and all edges $vy \in B$ incident with $y''$ and similarly replacing $z$ with two new vertices $z'$ and $z''$, putting all edges $uz \in A$ incident with $z'$ and all edges $vz \in B$ incident with $z''$.   
Now identify vertices $y'$ and $z'$ and identify vertices $y''$ and $z''$, and put the edges in $\delta(z) \cap A$ and in $\delta(z) \cap B$ in $\Sigma$ (Figure \ref{fig:type3_to_type1_2separation2}).  
It is easily checked that a subset $C \subseteq E$ is a circuit in $F(\Om)$ \iiff $C$ is a circuit in $F(\Om')$, so $F(\Om') \iso F(\Om)$.  
Since in this case $L$ is empty, $\Om'$ is an $L$-biased graph representing $M$, as required.  
\begin{figure}[tbp]
\begin{center}
\includegraphics[scale=0.8]{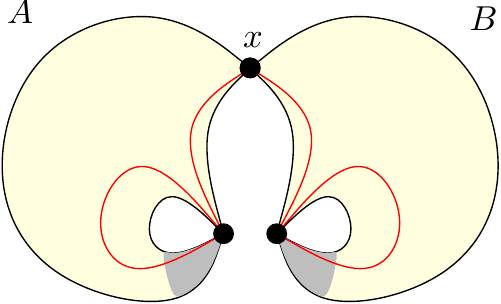}
\end{center}
\caption{A representation in which the biseparation is type 1}
\label{fig:type3_to_type1_2separation2}
\end{figure}
\end{proof}

\subsection{Proof of Theorem \ref{thm:biased2sum}}

With this we are ready to prove the main result of this section.  

\begin{lem} \label{lem:workhorse2sum}
Let $\Mm_1{=}(M_1,L_1)$ and $\Mm_2{=}(M_2, L_2)$ be connected frame matroidals on $E_1, E_2$, respectively.  
If for $i=1, 2$, $e_i \in E_i \setminus L_i$, then $(M_1 \twosume{e_1}{e_2} M_2, L_1 \cup L_2)$ is frame if and only if one of the following holds.  
\begin{enumerate}
\item $L_i = \emptyset$ and $M_i$ is graphic for one of $i=1$ or $i=2$.
\item $(M_i, L_i \cup \{e_i\})$ is frame for both $i=1, 2$.  
\end{enumerate}
\end{lem}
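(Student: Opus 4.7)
The plan is to prove the biconditional by combining Theorem~\ref{thm:tamebisep} and Proposition~\ref{prop:type1or2_bisep_is_link_loop_sum} for the ``only if'' direction, and by constructing explicit biased graphical 2-sums for the ``if'' direction.

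For the ``only if'' direction: assume $(M_1 \twosume{e_1}{e_2} M_2, L_1 \cup L_2)$ is frame. The 2-sum structure gives a 2-separation $(E_1 \setminus \{e_1\}, E_2 \setminus \{e_2\})$ of $M = M_1 \twosume{e_1}{e_2} M_2$, so by Theorem~\ref{thm:tamebisep} there is an $(L_1 \cup L_2)$-biased representation $\Om$ of $M$ in which this biseparation has type 1 or 2. Proposition~\ref{prop:type1or2_bisep_is_link_loop_sum} then decomposes $\Om$ as a loop-sum (in the type-1 case) or a link-sum (in the type-2 case) of biased graphs $\Om_1, \Om_2$ on $E_1, E_2$ representing $M_1, M_2$. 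In the loop-sum case each $e_i$ is an unbalanced loop in $\Om_i$, and since the unbalanced loops of $\Om$ split into those of $\Om_1$ and $\Om_2$, the set $L_i$ is preserved as unbalanced loops in $\Om_i$; hence $\Om_i$ represents $(M_i, L_i \cup \{e_i\})$, yielding case~2. In the link-sum case one of $\Om_1, \Om_2$, say $\Om_1$, is balanced; then $\Om_1$ has no unbalanced loops, forcing $L_1 = \emptyset$, and $M_1 = F(\Om_1)$ is graphic, yielding case~1.

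For the ``if'' direction: in case~2, take $(L_i \cup \{e_i\})$-biased representations $\Om_i$ (so that $e_i$ is an unbalanced loop in $\Om_i$) and form their loop-sum on $e_1, e_2$; Proposition~\ref{prop:biasedgraph_twosums} confirms this represents $M$, and all of $L_1 \cup L_2$ remain unbalanced loops. In case~1 (WLOG $L_1 = \emptyset$ and $M_1$ graphic) I fix a graph $G_1$ representing $M_1$ with $e_1 = u_1 v_1$ a link, together with an $L_2$-biased representation $\Om_2$ of $M_2$. Since $e_2$ is neither a loop nor a coloop of $M_2$, $e_2$ is either a link or an unbalanced loop in $\Om_2$. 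If it is a link I take the link-sum of $G_1$ and $\Om_2$ on $e_1, e_2$ and invoke Proposition~\ref{prop:biasedgraph_twosums}. If instead $e_2$ is an unbalanced loop at some vertex $v$, I pinch $u_1, v_1$ in $G_1 \setminus e_1$ to obtain a signed graph $\Om_1'$ with balancing vertex $w$ representing $M_1 \setminus e_1$, and then identify $w$ with $v$ in $\Om_2 \setminus e_2$ to form the desired biased graph $\Om$.

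The main technical obstacle is verifying the circuit correspondence in this pinch-and-attach construction. The driving observation is that $u_1$-$v_1$ paths in $G_1 \setminus e_1$ correspond precisely to unbalanced cycles at $w = v$ in $\Om_1'$. A 2-sum circuit of the form $(C_1 \cup C_2) \setminus \{e_1, e_2\}$, with $C_1 = e_1 \cup P_1$ a cycle of $G_1$ through $e_1$ and $C_2$ a tight or loose handcuff circuit of $M_2$ through the unbalanced loop $e_2$ at $v$, then corresponds in $\Om$ to a tight or loose handcuff: one unbalanced cycle at $v$ supplied by $P_1$ from the $\Om_1'$-side, the other coming from $\Om_2 \setminus e_2$ and joined to $v$ through a path when necessary. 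Conversely, since $v$ is the unique shared vertex, every cycle of $\Om$ lies wholly on one side, preventing spurious crossings; and the two-b-class structure of $\Om_1'$ at $w$ (b-classes $\delta(u_1)$ and $\delta(v_1)$) rules out any contrabalanced theta on the $\Om_1'$-side by a pigeonhole argument over the three paths. Together these ensure $F(\Om) \iso M$ and that $\Om$ is an $(L_1 \cup L_2)$-biased representation, completing the case.
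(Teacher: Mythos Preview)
Your argument is correct and follows the same approach as the paper: Theorem~\ref{thm:tamebisep} plus Proposition~\ref{prop:type1or2_bisep_is_link_loop_sum} for the ``only if'' direction, and biased-graph 2-sums for the ``if'' direction. The paper handles the ``if'' direction in one line by citing Proposition~\ref{prop:biasedgraph_twosums}; you give more detail, and in particular you correctly flag the subtlety in case~1 that $e_2$ might be an unbalanced loop in the chosen $L_2$-biased representation $\Om_2$, so a direct link-sum is unavailable.

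Your pinch-and-attach fix for that subcase is right, but you can avoid the manual circuit verification (which as written does not cover all circuit types on the $\Om$ side, e.g.\ handcuffs with both unbalanced cycles in $\Om_1'$). Simply pinch $u_1,v_1$ in $G_1$ itself rather than in $G_1\setminus e_1$: the resulting biased graph still represents $M_1$, and now $e_1$ is an unbalanced loop at the pinch vertex $w$. Your $\Om$ is then exactly the loop-sum of this graph with $\Om_2$ on $e_1,e_2$, and Proposition~\ref{prop:biasedgraph_twosums} finishes the job with no further checking. Equivalently, in this subcase $(M_1,\{e_1\})$ and $(M_2,L_2\cup\{e_2\})$ are both frame, so case~2 already covers it.
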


\begin{proof}
The ``if'' direction follows immediately from Proposition \ref{prop:biasedgraph_twosums}.  
Conversely, consider a frame matroidal resulting from a 2-sum, $(M_1 \twosume{e_1}{e_2} M_2, L_1 \cup L_2)$.  
By Theorem \ref{thm:tamebisep} there is a $(L_1 \cup L_2)$-biased representation $\Om$ of the 2-sum in which the biseparation $(E_1 \setminus e_1, E_2 \setminus e_2)$ is type 1 or 2.  
By Proposition \ref{prop:type1or2_bisep_is_link_loop_sum}, there are biased graphs $\Om_1$ on $E_1$ and $\Om_2$ on $E_2$ such that $\Om$ is a link- or loop-sum on $e_1$ and $e_2$.  
If $\Om$ is a link-sum, then 1 holds.  
If $\Om$ is a loop-sum, then both $\Om_i$ are $(L_i \cup \{e_i\})$-biased representations of $M_i$, so both matroidals $(M_i, L_i \cup \{e_i\})$ are frame ($i \in \{1,2\}$).  
\end{proof} 

Lemma \ref{lem:workhorse2sum} immediately implies Theorem \ref{thm:biased2sum}.  

\begin{proof}[Proof of Theorem \ref{thm:biased2sum}]
Apply Lemma \ref{lem:workhorse2sum} with $L_1 = L_2 = \emptyset$.
\end{proof}

\section{Excluded minors}
\label{sec:Excluded_minors} 

In this section we use Theorem \ref{thm:biased2sum} to construct a family $\Ee_0$ of 9 excluded minors with connectivity 2.  
We then show that any excluded minor of connectivity 2 that is not in $\Ee_0$ has a special structure.  

\subsection{The excluded minors $\Ee_0$}

The graph obtained from $K_{3,3}$ by adding an edge $e'$ linking two non-adjacent vertices is denoted $K_{3,3}'$; we also denote the corresponding element of $M^*(K_{3,3}')$ by $e'$.  
Let 
\begin{align*} 
\Ee_0 =& \{ U_{2,4} \twosum M^*(H) \mid H \in \{K_5, K_{3,3}, K_{3,3}'\}\} \\ 
	&\cup \{ M^*(H_1) \twosum M^*(H_2) \mid H_1, H_2 \in \{K_5, K_{3,3}, K_{3,3}'\} \}, 
\end{align*}
where the 2-sum is taken on $e'$ whenever $H$, $H_1$ or $H_2$ is $K_{3,3}'$.  

There are three biased graphs representing $U_{2,4}$, two biased graphs representing $M^*(K_5)$, and just one biased graph representation of $M^*(K_{3,3})$ \cite{MR1058551}.  
These are shown in Figure 
\ref{fig:biased_graphs_of_ex_min_for_graphic}.  
\begin{figure}[tbp]
\begin{center}
\includegraphics[scale=0.8]{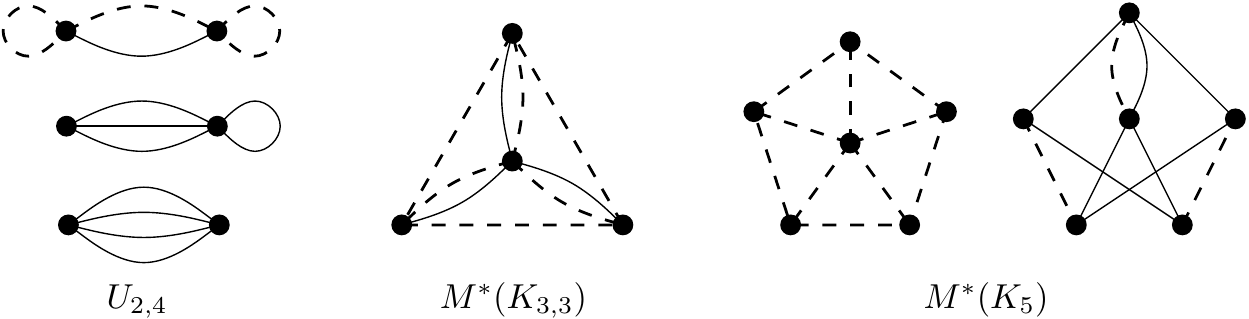}
\caption{The biased graphs representing excluded minors for the class of graphic matroids.  Those with dashed edges are signed graphs, with signatures given by dashed edges.  The other two biased graphs, representing $U_{2,4}$, have all cycles unbalanced.} 
\label{fig:biased_graphs_of_ex_min_for_graphic}
\end{center}
\end{figure}
There are two biased graphs representing $M^*(K_{3,3}')$, shown in Figure \ref{fig:K_33_+_an_edge}.  
\begin{figure}[tbp]
\begin{center}
\includegraphics[scale=0.8]{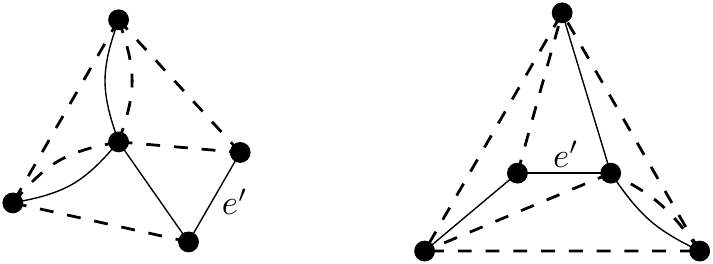}
\end{center}
\caption[$M^*(K_{3,3}')$]{The biased graphs representing $M^*(K_{3,3}')$; both are signed graphs with signature the dashed edges.} 
\label{fig:K_33_+_an_edge}
\end{figure}

%

\begin{thm} \label{thm:e0excl}
The matroids in $\Ee_0$ are excluded minors for the class of frame matroids.  
\end{thm}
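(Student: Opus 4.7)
The plan is to verify, for each $M \in \Ee_0$, both that $M$ is not frame and that every proper minor of $M$ is frame. Both parts are driven by Theorem~\ref{thm:biased2sum} together with the fact that $U_{2,4}$, $M^*(K_5)$ and $M^*(K_{3,3})$ are the three excluded minors for the class of graphic matroids.

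To show $M = M_1 \twosume{e_1}{e_2} M_2$ is not frame, I apply Theorem~\ref{thm:biased2sum}. Neither $M_1$ nor $M_2$ is graphic: the first three matroids in our list are excluded minors for graphic matroids, and $M^*(K_{3,3}')$ has $M^*(K_{3,3})$ as a minor (contract $e'$). So alternative~1 of Theorem~\ref{thm:biased2sum} fails. For alternative~2 I need to exhibit an $i$ for which $(M_i, \{e_i\})$ is not a frame matroidal. Inspecting the complete lists of biased graph representations catalogued in Figures~\ref{fig:biased_graphs_of_ex_min_for_graphic} and~\ref{fig:K_33_+_an_edge} (due to \cite{MR1058551}), one sees that every representation of $M^*(K_5)$ or $M^*(K_{3,3})$ is a signed graph on a loopless underlying graph, so no element is an unbalanced loop in any representation; and in both representations of $M^*(K_{3,3}')$ the distinguished edge $e'$ is a link, not an unbalanced loop. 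Hence for each $M \in \Ee_0$ at least one $(M_i, \{e_i\})$ fails to be a frame matroidal, so alternative~2 fails and $M$ is not frame.

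For the second part, take $f \in E(M)$; by symmetry we may assume $f \in E(M_1) \setminus \{e_1\}$. The standard theory of 2-sums gives $M \setminus f = (M_1 \setminus f) \twosume{e_1}{e_2} M_2$, and similarly for contraction, except in the degenerate case where $e_1$ becomes a loop or a coloop of $M_1 \setminus f$ (or $M_1 / f$), in which case $M \setminus f$ (or $M / f$) is a direct sum of two proper minors of the summands. A direct sum of frame matroids is frame (take the disjoint union of biased graph representations), so the degenerate case needs no further work. In the non-degenerate case I will show that the reduced first summand is graphic, whereupon alternative~1 of Theorem~\ref{thm:biased2sum} applies, using that $M_2$ is frame. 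If $M_1 \in \{U_{2,4}, M^*(K_5), M^*(K_{3,3})\}$, this is automatic from the excluded-minor property. If $M_1 = M^*(K_{3,3}')$ and $e_1 = e'$, then $f \ne e'$ and the stabiliser of $e'$ in $\mathrm{Aut}(K_{3,3}')$ has two orbits on the remaining edges (edges meeting an endpoint of $e'$ and edges not), so it suffices to check on one representative of each orbit that $K_{3,3}' \setminus f$ and $K_{3,3}' / f$ are planar. A short degree-count suffices for Kuratowski's theorem here: after either reduction there are too few vertices of degree $\ge 3$ to host a $K_{3,3}$-subdivision and too few of degree $\ge 4$ for a $K_5$-subdivision. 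Planarity then transfers via duality to graphicness of both $M^*(K_{3,3}') \setminus f = (M(K_{3,3}'/f))^*$ and $M^*(K_{3,3}') / f = (M(K_{3,3}' \setminus f))^*$.

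The main obstacle will be the planarity verification for $M^*(K_{3,3}')$: it is the only step that is not purely formal, and amounts to a concrete orbit-by-orbit case analysis. Everything else slots cleanly into the 2-sum machinery developed in Section~\ref{sec:2-sums of frame matroids} and the excluded-minor property of graphic matroids.
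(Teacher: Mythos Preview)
Your proof is correct and follows essentially the same approach as the paper: apply Theorem~\ref{thm:biased2sum} to show non-frameness (neither summand is graphic, and the cographic summand admits no biased-graph representation with the basepoint as an unbalanced loop), then apply it again to each single-element minor to see that one summand becomes graphic. The only difference is that you supply details the paper leaves implicit, namely the degenerate direct-sum case and the explicit planarity verification for $K_{3,3}' \setminus f$ and $K_{3,3}'/f$.
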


\begin{proof}
Let $M_1 \twosum M_2 \in \Ee_0$, with $M_1$ one of $U_{2,4}$, $M^*(K_5)$, $M^*(K_{3,3})$, or $M^*(K_{3,3}')$ and $M_2$ one of $M^*(K_5),$ $M^*(K_{3,3})$, or $M^*(K_{3,3}')$.  
Since neither $M_1$ nor $M_2$ is graphic and $M_2$ has no representation with a loop, by Theorem \ref{thm:biased2sum} $M_1 \twosum M_2$ is not frame.  
Since every proper minor of $U_{2,4}$, $M^*(K_5)$, and $M^*(K_{3,3})$ is graphic, and for every $e \not=e'$, both $M^*(K_{3,3}') \setminus e$ and $M^*(K_{3,3}') /e$ are graphic, 
every proper minor of $M_1 \twosum M_2$ is a 2-sum of a graphic matroid and a frame matroid.  
Hence by Theorem \ref{thm:biased2sum}, every proper minor of $M_1 \twosum M_2$ is frame.  
\end{proof}

\subsection{Other excluded minors of connectivity 2}

We now investigate excluded minors of connectivity 2 that are not in $\Ee_0$.  
We show that any such excluded minor has the following structure.  
For a matroid $M$ and subset $L \subseteq E(M)$, the matroid obtained by taking a 2-sum of a copy of $U_{2,4}$ on each element in $L$ is denoted $M \gltwosum{L}$.  

\begin{thm} \label{thm:exclminr2sumu24}
Let $M$ be an excluded minor for the class of frame matroids.  If $M$ has connectivity 2 and is not in $\Ee_0$, then $M =  N \gltwosum{L}$ for a 3-connected frame matroid $N$.  
\end{thm}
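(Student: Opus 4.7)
The plan is to analyse a 2-separation of $M$, classify the small side, and iterate. By Theorem~\ref{thm:2sep_iff_2sum} I would write $M = M_1 \twosume{e_1}{e_2} M_2$ with both $M_1$ and $M_2$ proper minors of $M$; these are frame by the excluded-minor property. Theorem~\ref{thm:biased2sum} then yields that neither $M_1$ nor $M_2$ is graphic, and that at least one of the matroidals $(M_i,\{e_i\})$ is not a frame matroidal; say $(M_1,\{e_1\})$ is not. By refining the decomposition if necessary (regrouping along further 2-separations of $M_2$, and using that a graphic summand would make $M$ frame by Theorem~\ref{thm:biased2sum} and so rules out circuit or cocircuit ``leaves''), I may arrange that $M_2$ is 3-connected.

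The central step is to classify $M_2$. For every $f \in E(M_2) \setminus \{e_2\}$ the proper minor $M \setminus f = M_1 \twosum (M_2 \setminus f)$ is frame; since $M_1$ is non-graphic and $(M_1,\{e_1\})$ is not a frame matroidal, Theorem~\ref{thm:biased2sum} forces $M_2 \setminus f$ to be graphic, and symmetrically $M_2 / f$ is graphic. So every non-basepoint deletion and contraction of $M_2$ is graphic. Combined with $M_2$ being 3-connected, non-graphic, and frame, and using that the Fano matroids are not frame, this pins down $M_2 \in \{U_{2,4}, M^*(K_5), M^*(K_{3,3}), M^*(K_{3,3}')\}$, with $e_2 = e'$ in the last case.

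I then split into cases. If $M_2 \in \{M^*(K_5), M^*(K_{3,3}), M^*(K_{3,3}')\}$, inspection of the biased-graph representations in Figures~\ref{fig:biased_graphs_of_ex_min_for_graphic} and~\ref{fig:K_33_+_an_edge} shows $(M_2,\{e_2\})$ is itself never a frame matroidal. Applying the classification argument symmetrically to $M_1$ yields $M_1 \in \{U_{2,4}, M^*(K_5), M^*(K_{3,3}), M^*(K_{3,3}')\}$; since $(U_{2,4},\{e\})$ \emph{is} a frame matroidal (from the representation of $U_{2,4}$ with an unbalanced loop in Figure~\ref{fig:biased_graphs_of_ex_min_for_graphic}), $M_1$ in fact lies in the shorter list, placing $M \in \Ee_0$ and contradicting the hypothesis. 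Hence $M_2 = U_{2,4}$, so $M = M_1 \twosum U_{2,4}$. Iterating on the frame matroid $M_1$ (peeling off another leaf $U_{2,4}$ via the same analysis applied to the next 2-separation of $M$) leaves a 3-connected frame ``core'' $N$, giving $M = N \gltwosum{L}$ as required.

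I expect the main obstacle to be the classification step: converting the condition ``every non-basepoint deletion and contraction of $M_2$ is graphic'' into the short list above requires careful use of simplicity and cosimplicity (Observation~\ref{obs:ex_min_is_simple_and_cosimple}) together with an analysis of one-element extensions of the graphic excluded minors at a single distinguished basepoint, plus the non-frameness of the Fano matroids. A secondary technical point is justifying that the iteration proceeds cleanly after the first $U_{2,4}$ is peeled off, since $M_1$ is no longer an excluded minor; the argument must be re-cast by examining each remaining leaf of the 2-sum tree decomposition of $M$ in turn and re-applying the classification to it.
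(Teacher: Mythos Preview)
Your approach is different from the paper's and has a gap that is not straightforward to close. You fix the labelling so that $(M_1,\{e_1\})$ fails to be a frame matroidal and \emph{then} refine along 2-separations of $M_2$ to make it 3-connected. But this refinement replaces $M_1$ by a larger matroid with a \emph{new} basepoint, and nothing guarantees that the enlarged matroidal remains non-frame at that new basepoint; so you cannot then deduce that the single-element non-basepoint minors of the refined $M_2$ are graphic, and the classification step never gets started. The same problem recurs when you ``apply the classification argument symmetrically to $M_1$'': that step presumes $M_1$ is 3-connected, which you have not arranged and in general cannot arrange simultaneously with $M_2$ being a 3-connected leaf. Even granting all this, the classification you propose (3-connected, frame, non-graphic, all non-basepoint single-element minors graphic $\Rightarrow$ one of four matroids) is essentially the content of Seymour's 1-roundedness theorem restricted to this situation; you would be reproving it.

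The paper sidesteps all of this. Via Theorem~\ref{thm:Seymour_a_1_rounded_family} it shows directly (Lemma~\ref{lem:none0struc}) that for \emph{any} 2-sum decomposition $M = M_1 \twosum M_2$, either $M \in \Ee_0$ or both $M_i$ are non-binary, with no need to make either side 3-connected. Then (Lemma~\ref{lem:oneisu24}) it uses minimality differently: if neither $M_i$ is $U_{2,4}$, Propositions~\ref{prop:Bixby_U24s_use_every_element} and~\ref{prop:Brylawski/Seymour} produce a connected non-binary proper minor $M_1'$ of $M_1$, whence $M_1' \twosum M_2$ is frame and, $M_1'$ being non-graphic, Theorem~\ref{thm:biased2sum} forces $(M_2,\{e_2\})$ to be a frame matroidal; symmetrically $(M_1,\{e_1\})$ is, contradicting that $M$ is not frame. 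Thus one side of every 2-separation is a 3-circuit. Finally, instead of iterating, Lemma~\ref{lem:disjtu24} shows by a short submodularity argument that distinct such 3-circuits are disjoint, giving $M = N \gltwosum{L}$ with $N$ 3-connected in one stroke.
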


We prove Theorem \ref{thm:exclminr2sumu24} via three lemmas, each of which requires some explanation.  

A collection $\Nn$ of connected matroids is \emph{1-rounded} if it has the property that whenever a connected matroid $M$ has a minor $N \in \Nn$, then every element $e \in E(M)$ is contained in some minor $N'$ of $M$ with $N' \in \Nn$.  
The following is a result of Seymour (\cite{oxley:mt} Section 11.3). 

\begin{thm}
\label{thm:Seymour_a_1_rounded_family}
The collection $\{U_{2,4},$ $F_7, F_7^*,$ $M^*(K_5),$ $M^*(K_{3,3}),$ $M^*(K_{3,3}')\}$ is 1-rounded.    
\end{thm}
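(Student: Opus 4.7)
The plan is to follow Seymour's original argument, which reduces 1-roundedness of the collection $\Nn = \{U_{2,4}, F_7, F_7^*, M^*(K_5), M^*(K_{3,3}), M^*(K_{3,3}')\}$ to a finite case check made possible by the fact that every member of $\Nn$ is 3-connected. The overall structure is an induction on $|E(M)| - |E(N)|$: given a connected matroid $M$ with an $\Nn$-minor $N$ and an element $e \in E(M)$, one either finds that $e \in E(N)$ (done), or one finds a smaller step that brings $e$ closer to some $\Nn$-minor.

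The essential reduction step is a general extension lemma (a standard consequence of Bixby's Lemma together with the splitter/wheels-and-whirls framework): if $N$ is a 3-connected minor of a connected matroid $M$ and $e \in E(M) \setminus E(N)$, then $M$ has a minor $N^+$ on ground set $E(N) \cup \{e\}$ such that $N^+ \setminus e = N$ or $N^+ / e = N$. This converts the problem into the local question: for every $N \in \Nn$, and every connected single-element extension or coextension $N^+$ of $N$, does $N^+$ have a minor in $\Nn$ that contains the new element $e$? By matroid duality and the fact that $\Nn$ is closed under duality (with $F_7 \leftrightarrow F_7^*$, $U_{2,4}$, $M^*(K_5)$, $M^*(K_{3,3})$, $M^*(K_{3,3}')$ self-dual in the relevant sense), it suffices to check extensions.

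The case for $U_{2,4}$ is immediate: any connected single-element extension of $U_{2,4}$ is either $U_{2,5}$ or $U_{3,5}$ (up to duality), or contains a parallel/series pair with $e$, and in each case a $U_{2,4}$-minor through $e$ is obtained by a single deletion or contraction. For each of $F_7$, $F_7^*$ and the graphic-dual matroids $M^*(K_5)$, $M^*(K_{3,3})$, $M^*(K_{3,3}')$ the set of modular cuts (hence single-element extensions) is small and can be enumerated by hand using the transitive or near-transitive action of the automorphism group on the ground set. For each extension $N^+$ one exhibits an explicit minor in $\Nn$ through $e$; typical witnesses are either the original $N$ itself on the ``other side'' (if $N^+$ has another $N$-minor avoiding a prescribed element), or a smaller member of $\Nn$ such as $U_{2,4}$ obtained by finding a 4-element circuit and cocircuit meeting in $e$ alone.

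The main obstacle is the case analysis for $M^*(K_{3,3}')$. It is the least symmetric member of $\Nn$: the added edge $e'$ is distinguished, so the automorphism group has more than one orbit on elements, and several non-isomorphic extensions must be considered. The key observation needed here is that if a single-element extension $N^+$ of $M^*(K_{3,3}')$ fails to be 3-connected, then any 2-separation must place $e$ on a small side, which produces a $U_{2,4}$-minor through $e$ directly; while if $N^+$ is 3-connected, the Splitter Theorem applied to the 3-connected pair $(N^+, M^*(K_{3,3}'))$ forces a specific structural relationship that one verifies case by case yields an $\Nn$-minor through $e$. With all local cases verified, the inductive step combines with the extension lemma to give 1-roundedness of $\Nn$.
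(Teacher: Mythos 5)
The paper does not actually prove this statement; it is quoted from Seymour via Oxley (Section 11.3), so there is no internal proof to compare against. Your high-level strategy is nevertheless the standard one: by the rounding criterion (Oxley, Theorem 11.3.2) together with an induction using the Brylawski--Seymour connectivity result (Proposition 4.3.6 in Oxley, restated in this paper as Proposition \ref{prop:Brylawski/Seymour}), 1-roundedness of a family of 3-connected matroids reduces to checking that, in every connected single-element extension or coextension $N^+$ of a member $N$, the new element lies in an $\Nn$-minor. (This reduction has nothing to do with Bixby's Lemma or the wheels-and-whirls/Splitter framework, which you invoke; once $N^+$ has exactly one element more than $N$, the Splitter Theorem in particular tells you nothing new.)

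There are, however, genuine gaps. First, your duality reduction is based on a false premise: the collection $\{U_{2,4}, F_7, F_7^*, M^*(K_5), M^*(K_{3,3}), M^*(K_{3,3}')\}$ is \emph{not} closed under duality, since $(M^*(K_5))^* = M(K_5)$, $(M^*(K_{3,3}))^* = M(K_{3,3})$, and $(M^*(K_{3,3}'))^* = M(K_{3,3}')$, none of which belongs to the collection; these matroids are not ``self-dual in the relevant sense.'' Consequently you cannot discard the coextension cases: the coextensions of the cographic members (equivalently, extensions of $M(K_5)$, $M(K_{3,3})$, $M(K_{3,3}')$ in which one must still locate minors from $\Nn$ itself, not from $\Nn^*$) constitute a separate and substantial portion of the case analysis that your argument never addresses. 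Second, the finite case check is the entire mathematical content of the theorem, and you do not carry it out; asserting that the modular cuts ``can be enumerated by hand'' and that ``one exhibits an explicit minor'' is a plan, not a proof. Third, at least one of your claimed witnesses is impossible: if a single-element extension of $M^*(K_{3,3}')$ fails to be 3-connected, the new element $e$ is parallel to an existing element, so the extension is still binary and has no $U_{2,4}$-minor at all; the correct (and easier) conclusion is that deleting the parallel partner recovers $M^*(K_{3,3}')$ using $e$. The correct citation of Seymour's theorem, as the paper gives it, is the appropriate way to use this result here; a self-contained proof would require completing the full extension \emph{and} coextension case analysis for all six matroids.
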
 

We use Theorem \ref{thm:Seymour_a_1_rounded_family} in the proof of the following lemma, to find a minor containing the base point on which a 2-sum is taken.  

\begin{lem} \label{lem:none0struc}
Let $M_1,M_2$ be nontrivial matroids and suppose $M_1 \twosume{e_1}{e_2} M_2$ is an excluded minor for the class of frame matroids, for some $e_1 \in E(M_1)$ and $e_2 \in E(M_2)$.  Then either $M_1 \twosume{e_1}{e_2} M_2 \in \Ee_0$ or both $M_1$ and $M_2$ are non-binary frame matroids.   
\end{lem}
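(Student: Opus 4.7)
The plan is to derive $M \in \Ee_0$ whenever one of the summands is binary, using the 1-roundedness result (Theorem \ref{thm:Seymour_a_1_rounded_family}) to pull a ``nice'' minor through the basepoint of the 2-sum on each side.

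First, because $M = M_1 \twosume{e_1}{e_2} M_2$ is an excluded minor and each $M_i$ is a proper minor of $M$ (Theorem \ref{thm:2sep_iff_2sum}), both $M_1$ and $M_2$ are frame. Since $M$ itself is not frame while both summands are, Theorem \ref{thm:biased2sum} rules out either summand being graphic (otherwise condition 1 there would force $M$ to be frame). So it remains only to show: if $M_1$ is binary, then $M \in \Ee_0$; by symmetry this completes the proof.

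Assume $M_1$ is binary. Since $M_1$ is connected, binary, frame, and non-graphic, Tutte's characterization of graphic matroids within the binary ones furnishes a minor of $M_1$ in $\{F_7, F_7^*, M^*(K_5), M^*(K_{3,3})\}$. Since $F_7$ and $F_7^*$ are not frame (by Zaslavsky's excluded-minor results cited in the introduction), $M_1$ must have an $M^*(K_5)$- or $M^*(K_{3,3})$-minor. Now apply Theorem \ref{thm:Seymour_a_1_rounded_family} to $M_1$ and the element $e_1$: there is a minor $N_1$ of $M_1$ containing $e_1$ with
\[ N_1 \in \{U_{2,4}, F_7, F_7^*, M^*(K_5), M^*(K_{3,3}), M^*(K_{3,3}')\}. \]
Binarity of $M_1$ excludes $U_{2,4}$ and frameness excludes $F_7, F_7^*$, leaving $N_1 \in \{M^*(K_5), M^*(K_{3,3}), M^*(K_{3,3}')\}$. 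An analogous argument applied to $M_2$ (which is connected, frame, and non-graphic, but not necessarily binary) combined with Tutte's full characterization of graphic matroids and the 1-roundedness theorem yields a minor $N_2$ of $M_2$ containing $e_2$ with $N_2 \in \{U_{2,4}, M^*(K_5), M^*(K_{3,3}), M^*(K_{3,3}')\}$.

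Since the deletions and contractions used to build $N_i$ from $M_i$ lie entirely outside $\{e_1,e_2\}$, performing them on $M$ produces $N_1 \twosume{e_1}{e_2} N_2$ as a minor of $M$; by construction this 2-sum lies in $\Ee_0$, and by Theorem \ref{thm:e0excl} it is not frame. Since every proper minor of $M$ is frame, $N_1 \twosume{e_1}{e_2} N_2$ cannot be a proper minor of $M$, and so $M = N_1 \twosume{e_1}{e_2} N_2 \in \Ee_0$, as required.

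The main obstacle is the step that invokes $F_7, F_7^*$ not being frame, which is what lets the 1-roundedness result deposit the minor through $e_i$ into the small family $\{M^*(K_5), M^*(K_{3,3}), M^*(K_{3,3}')\}$ (possibly together with $U_{2,4}$ on the side where the summand is not assumed binary). Once the minor lives in that family everything is routine bookkeeping: it is the interplay of 1-roundedness with Theorem \ref{thm:biased2sum} and the definition of $\Ee_0$ that does all the work.
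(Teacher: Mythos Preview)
Your proof is correct and follows essentially the same strategy as the paper's: both $M_i$ are frame by minimality, neither is graphic by Theorem \ref{thm:biased2sum}, and Seymour's 1-roundedness (Theorem \ref{thm:Seymour_a_1_rounded_family}) pulls a minor $N_i \in \{U_{2,4}, M^*(K_5), M^*(K_{3,3}), M^*(K_{3,3}')\}$ through each basepoint $e_i$ (ruling out $F_7, F_7^*$ since these are not frame); then unless both $N_i \cong U_{2,4}$ the 2-sum $N_1 \twosume{e_1}{e_2} N_2$ lands in $\Ee_0$ and minimality forces $M = N_1 \twosume{e_1}{e_2} N_2$. Your contrapositive packaging (assume $M_1$ binary, deduce $M \in \Ee_0$) is just a reorganisation of this same argument; the detour through Tutte's binary characterisation is harmless but unnecessary.

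One small point you skipped: $\Ee_0$ is defined so that whenever $M^*(K_{3,3}')$ appears, the 2-sum is taken on the distinguished element $e'$, and the 1-roundedness theorem does not by itself place $e_i$ at $e'$. The paper handles this by noting that one may assume $e_i = e'$ whenever $N_i \cong M^*(K_{3,3}')$; the justification is that $M^*(K_{3,3}')/e' \cong M^*(K_{3,3})$, so if $e_i \neq e'$ one simply contracts $e'$ and replaces $N_i$ by the resulting $M^*(K_{3,3})$ minor, which still contains $e_i$. You should insert this remark before asserting that $N_1 \twosume{e_1}{e_2} N_2 \in \Ee_0$.
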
 

\begin{proof}
By minimality, $M_1$ and $M_2$ are both frame.  
By Theorem \ref{thm:biased2sum}, neither $M_1$ nor $M_2$ is graphic.  
Thus each contains an excluded minor for the class of graphic matroids, namely, one of $U_{2,4}$, $F_7$, $F_7^*$, $M^*(K_5)$, or $M^*(K_{3,3})$.  
By Theorem \ref{thm:Seymour_a_1_rounded_family}, for $i \in \{1,2\}$, matroid $M_i$ contains a minor $N_i$ isomorphic to one of $U_{2,4}$, $F_7$, $F_7^*$, $M^*(K_5)$, $M^*(K_{3,3})$, or  $M^*(K_{3,3}')$ with $e_i \in E(N_i)$; we may assume that if $N_i \iso M^*(K_{3,3}')$ then $e_i$ is edge $e'$.  
Since neither $F_7$ nor $F_7^*$ are frame, neither $N_1$ nor $N_2$ is isomorphic to $F_7$ or $F_7^*$.  
If $N_1 \twosume{e_1}{e_2} N_2 \in \Ee_0$, then by minimality, for $i \in \{1,2\}$, $M_i \iso N_i$ and $M_1 \twosume{e_1}{e_2} M_2 \iso N_1 \twosume{e_1}{e_2} N_2$.  
Otherwise, $N_1 \iso N_2 \iso U_{2,4}$, so both $M_1$ and $M_2$ are non-binary.  
\end{proof}

Our next lemma requires two basic facts.  
The first is a result of Bixby; the second was proved independently by Brylawski and Seymour.  

\begin{prop}[\cite{oxley:mt}, Proposition 11.3.7] 
\label{prop:Bixby_U24s_use_every_element} 
Let $M$ be a connected matroid having a $U_{2,4}$ minor and let $e \in E(M)$.  Then $M$ has a $U_{2,4}$ minor using $e$.  
\end{prop}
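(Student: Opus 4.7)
The plan is to argue by induction on $|E(M)|$. The base case $|E(M)|=4$ is immediate: the only connected matroid on four elements containing a $U_{2,4}$ minor is $U_{2,4}$ itself, so any $e$ lies in this minor.

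For the inductive step, suppose $|E(M)|\geq 5$. By Tutte's characterisation of binary matroids, non-binariness of $M$ is equivalent to $M$ having a $U_{2,4}$ minor, so fix one such minor $N=M/C\setminus D$. If $e\in E(N)$ we are done, so assume $e\in C\cup D$; since $|E(N)|=4<|E(M)|$, there is some $f\in(C\cup D)\setminus\{e\}$, and whichever of $M\setminus f$ (for $f\in D$) or $M/f$ (for $f\in C$) applies retains $N$ as a minor and is therefore non-binary. The goal is to arrange that this strictly smaller matroid is also connected and still contains $e$; the inductive hypothesis then supplies a $U_{2,4}$ minor through $e$ in the reduction, which is also a minor of $M$.

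The main tool for preserving connectedness is Bixby's lemma: in a connected matroid, for every element $g$, at least one of $\co(M\setminus g)$ or $\si(M/g)$ is connected. Since simplification and cosimplification merge parallel and series classes without altering which $U_{2,4}$ minors exist through a particular element, we may freely pass to the connected version. If the operation Bixby's lemma delivers agrees with the operation that preserves non-binariness (the one dictated by whether $f\in C$ or $f\in D$), the induction goes through directly.

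The main obstacle, and the part requiring the most care, is the possibility that for every choice of $f$, Bixby's lemma forces the ``wrong'' operation: contraction when deletion is needed, or vice versa. In this situation one must replace $N$ by a different $U_{2,4}$ minor of $M$ that cooperates. The key structural step is to observe that if every $U_{2,4}$ minor $N'$ of $M$ has $e\notin E(N')$, then by connectedness of $M$ there is a circuit of $M$ through $e$ meeting $E(N')$; a standard exchange argument along this circuit (or a cocircuit through $e$ meeting $E(N')$) produces a new $U_{2,4}$ minor placing $e$ into a deletion or contraction set that is compatible with Bixby's lemma. Iterating this swap yields either a $U_{2,4}$ minor containing $e$ outright or a situation in which induction applies, completing the proof.
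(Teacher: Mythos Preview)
The paper does not prove this proposition; it is quoted from Oxley's textbook as a known result of Bixby, so there is no proof in the paper to compare against. I will therefore just assess your argument on its own.

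Your inductive framework is sound, but the execution has two gaps.

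First, a minor one: your claim that ``since $|E(N)|=4<|E(M)|$, there is some $f\in(C\cup D)\setminus\{e\}$'' fails when $|E(M)|=5$ and $e\in C\cup D$, since then $C\cup D=\{e\}$. This case needs to be handled separately (it reduces to analysing connected single-element extensions and coextensions of $U_{2,4}$, which is easy but must be said).

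Second, and more seriously, you are using the wrong connectivity tool, and this is what creates the ``obstacle'' you then struggle to resolve. The lemma you call Bixby's lemma only tells you that one of $M\setminus f$, $M/f$ is connected (after simplification/cosimplification), but gives no control over which one; you then need it to coincide with the operation that keeps $N$ as a minor, and your proposed fix via ``a standard exchange argument along a circuit'' is not an argument---it is a hope. No concrete mechanism is given for producing a new $U_{2,4}$ minor compatible with the connectivity side, and it is not clear one exists in the form you describe. The clean way through is to use instead the Brylawski--Seymour result (stated in the paper as Proposition~\ref{prop:Brylawski/Seymour}): for $f\in E(M)\setminus E(N)$, at least one of $M\setminus f$ or $M/f$ is \emph{both} connected \emph{and} has $N$ as a minor. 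Applying this to any $f\in (C\cup D)\setminus\{e\}$ gives a connected proper minor containing $e$ with a $U_{2,4}$ minor, and the induction closes immediately with no obstacle case at all.
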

 
\begin{prop}[\cite{oxley:mt}, Proposition 4.3.6] 
\label{prop:Brylawski/Seymour}
Let $N$ be a connected minor of a connected matroid $M$ and suppose that $e \in E(M) \setminus E(N)$.  Then at least one of $M \setminus e$ and $M / e$ is connected and has $N$ as a minor.  
\end{prop}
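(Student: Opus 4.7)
The plan is to argue by a case analysis on where $e$ sits relative to a minor representation of $N$, invoking the classical dichotomy (due to Tutte) that in a connected matroid $M$ with $|E(M)| \geq 2$, for every element $e$ at least one of $M \setminus e$ and $M / e$ is connected. Since $M$ is connected and has at least two elements, $e$ is neither a loop nor a coloop, so both operations yield genuine matroids; Tutte's dichotomy itself follows from a short rank argument ruling out simultaneous nontrivial separations of $M \setminus e$ and $M / e$ when $M$ has no proper 1-separation. By matroid duality, which interchanges $M \leftrightarrow M^*$, $N \leftrightarrow N^*$, and $\setminus \leftrightarrow /$ while preserving connectedness and the minor relation, I may assume without loss of generality that $M \setminus e$ is connected.

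Next I would write $N = M / X \setminus Y$ for a partition $\{X, Y\}$ of $E(M) \setminus E(N)$ with $X$ chosen independent in $M$. Two subcases then resolve immediately. If $e \in Y$, then $N = (M \setminus e)/ X \setminus (Y \setminus e)$ exhibits $N$ as a minor of the connected matroid $M \setminus e$. If $e \in X$ and $M / e$ happens also to be connected, then $N = (M / e) / (X \setminus e) \setminus Y$ exhibits $N$ as a minor of the connected matroid $M / e$.

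The hard case is $e \in X$ with $M / e$ disconnected. Here I would fix a partition $(A, B)$ of $E(M) \setminus \{e\}$ witnessing the disconnection of $M / e$; since $N$ is connected, $E(N)$ lies entirely in one side, say $E(N) \subseteq A$. Combining the inequality $r_M(A) + r_M(B) \geq r(M) + 1$ (from connectedness of $M \setminus e$) with the equality $r_M(A \cup \{e\}) + r_M(B \cup \{e\}) = r(M) + 1$ (from the disconnection of $M / e$) and using monotonicity of rank forces equality throughout, whence $e \in \cl_M(A) \cap \cl_M(B)$. In particular, there exists a circuit $C$ of $M$ containing $e$ with $C \setminus \{e\} \subseteq B$, and thus $C \setminus \{e\}$ is disjoint from $E(N)$ and lies inside $X \cup Y$.

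The main obstacle is then to exploit this circuit $C$ to rewrite the minor representation of $N$ as a minor of $M \setminus e$. The idea is to absorb the elements of $C \cap Y$ into the contraction set: after contracting $(X \setminus \{e\}) \cup (C \cap Y)$, the element $e$ becomes a loop of the resulting intermediate matroid (because the dependence witnessed by $C$ survives the relevant contractions), so contracting $e$ coincides with deleting $e$. This lets one exchange $e$ from the contraction set into the deletion set without changing the final matroid, exhibiting $N$ as a minor of $M \setminus e$. The delicate point to verify is that this rearrangement genuinely produces $N$ and not some other quotient, which requires careful circuit-elimination arguments inside $M \setminus (Y \setminus C)$ together with the fact that $C \setminus \{e\}$ is disjoint from $E(N)$; this is where the bulk of the technical work lies.
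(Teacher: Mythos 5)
The paper itself gives no proof of this statement --- it is quoted from Oxley (a result of Brylawski and Seymour) --- so your attempt has to be judged against the standard argument. Your reductions are all sound: the duality-plus-Tutte reduction to the case that $M \setminus e$ is connected, the immediate subcases ($e \in Y$, or $e \in X$ with $M/e$ connected), and, in the hard case, the rank computation showing equality throughout, hence $e \in \cl_M(A) \cap \cl_M(B)$ and the existence of a circuit $C$ with $e \in C$ and $C \setminus \{e\} \subseteq B$, are all correct.

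The gap is exactly at the step you defer as ``the bulk of the technical work'': you never justify that moving $C \cap Y$ from the deletion set into the contraction set (and then trading the contraction of $e$ for a deletion) still yields $N$, and the tools you propose for this --- circuit elimination together with the fact that $C \setminus \{e\}$ is disjoint from $E(N)$ --- are not sufficient: deleting versus contracting an element outside $E(N)$ changes the resulting minor in general. What legitimises the exchange is the separation you already derived, used as a direct-sum decomposition: $(A,B)$ is a 1-separation of $M/e$, so $M/e = (M/e)|A \oplus (M/e)|B$, with $E(N) \subseteq A$ and $C \setminus \{e\} \subseteq B$; since every element of $B$ lies in the summand not containing $E(N)$, it may be deleted or contracted indifferently. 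Writing $N = (M/e)/(X \setminus \{e\}) \setminus Y$ and noting $B \subseteq (X \cup Y) \setminus \{e\}$, this gives $N = M/\bigl(\{e\} \cup (X \cap A) \cup B\bigr) \setminus (Y \cap A)$; now $e \in \cl_M(B)$ makes $e$ a loop of $M/\bigl((X \cap A) \cup B\bigr)$, so contracting $e$ equals deleting it, and $N = (M \setminus e)/\bigl((X \cap A) \cup B\bigr) \setminus (Y \cap A)$ is a minor of $M \setminus e$. (Equivalently, one can follow the textbook arrangement: reduce by duality to $e \in Y$, and if $M \setminus e$ is disconnected show $e \notin \cl_M(E_i)$ for the component $E_i$ containing $E(N)$, so $(M \setminus e)|E_i = (M/e)|E_i$ and $N$ is a minor of the connected matroid $M/e$.) As written, the decisive identity is asserted rather than proved, and the hint you give for proving it would not close the argument.
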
 

\begin{lem} \label{lem:oneisu24}
Let $M_1 \twosume{e_1}{e_2} M_2$ be an excluded minor for the class of frame matroids with both $M_1$ and $M_2$ non-binary.  
Then one of $M_1$ or $M_2$ is isomorphic to $U_{2,4}$.
\end{lem}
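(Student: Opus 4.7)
The plan is to exploit Theorem \ref{thm:biased2sum} together with Bixby's theorem (Proposition \ref{prop:Bixby_U24s_use_every_element}) to shrink one summand down to $U_{2,4}$ and obtain an immediate contradiction. Write $M = M_1 \twosume{e_1}{e_2} M_2$. Both $M_1$ and $M_2$ are proper minors of $M$, hence frame by minimality, and both are non-binary by hypothesis, so neither is graphic. Since $M$ itself is not frame, Theorem \ref{thm:biased2sum} tells us that case (1) fails automatically and therefore case (2) must also fail: at least one of the matroidals $(M_1, \{e_1\})$, $(M_2, \{e_2\})$ is not frame. Without loss of generality, suppose $(M_1, \{e_1\})$ is not a frame matroidal.

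I claim this forces $M_2 \iso U_{2,4}$. Suppose, toward a contradiction, that $M_2 \not\iso U_{2,4}$. Since $M_2$ is non-binary, we have $|E(M_2)| \geq 4$, with equality precisely when $M_2 \iso U_{2,4}$; thus $|E(M_2)| \geq 5$. As $M_2$ is connected (being a 2-sum summand) and has a $U_{2,4}$-minor, Bixby's theorem yields a $U_{2,4}$-minor $N_2$ of $M_2$ with $e_2 \in E(N_2)$. Because $|E(N_2)| = 4 < |E(M_2)|$, the matroid $N_2$ is a proper minor of $M_2$, and hence $M_1 \twosume{e_1}{e_2} N_2$ is a proper minor of $M_1 \twosume{e_1}{e_2} M_2 = M$ (2-sum commutes with minor operations on the second factor).

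By minimality of the excluded minor $M$, the matroid $M_1 \twosume{e_1}{e_2} N_2$ is frame. Applying Theorem \ref{thm:biased2sum} to this 2-sum: $M_1$ is not graphic (non-binary) and $N_2 \iso U_{2,4}$ is not graphic, so case (1) of the theorem cannot hold. Hence case (2) must hold, giving in particular that $(M_1, \{e_1\})$ is a frame matroidal. This contradicts our choice, and we conclude $M_2 \iso U_{2,4}$.

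The argument is short and has no serious obstacle once the right move is spotted: rather than trying to reduce the "bad" summand $M_1$ while preserving the non-frameness of its matroidal, one simply shrinks the other summand to a $U_{2,4}$-minor through $e_2$. Theorem \ref{thm:biased2sum} then forces $(M_1, \{e_1\})$ to be frame, contradicting its defining failure. The only mild point to check is that the 2-sum $M_1 \twosume{e_1}{e_2} N_2$ is indeed a proper minor of $M$, which follows from $|E(M_2)| \geq 5$.
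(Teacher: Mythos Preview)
Your proof is correct and follows a slightly different route from the paper's. The paper argues symmetrically: assuming neither $M_i$ is $U_{2,4}$, it uses Proposition \ref{prop:Brylawski/Seymour} (Brylawski/Seymour) together with Proposition \ref{prop:Bixby_U24s_use_every_element} to delete or contract a single element of $M_1$ while keeping it connected and non-binary, applies Theorem \ref{thm:biased2sum} to the resulting proper minor to conclude $(M_2,\{e_2\})$ is frame, then repeats on the other side to get $(M_1,\{e_1\})$ frame, and finally invokes Theorem \ref{thm:biased2sum} once more for the contradiction. You instead apply Theorem \ref{thm:biased2sum} at the outset to locate the non-frame matroidal (say $(M_1,\{e_1\})$), then shrink the \emph{other} summand all the way down to a $U_{2,4}$-minor through $e_2$ using only Bixby's result, and read off from Theorem \ref{thm:biased2sum} that $(M_1,\{e_1\})$ must be frame after all. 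Your argument is marginally more economical in that it dispenses with Proposition \ref{prop:Brylawski/Seymour}: since you pass directly to $N_2\iso U_{2,4}$, connectivity of the shrunken summand is automatic and need not be maintained step by step. The one point worth stating a bit more carefully is that $M_1 \twosume{e_1}{e_2} N_2$ is a minor of $M$; this is the standard fact that 2-sum commutes with deletion and contraction of elements of $M_2$ other than the basepoint, and is unproblematic here since $e_2$ is neither a loop nor a coloop of $N_2\iso U_{2,4}$.
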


\begin{proof}
Suppose for a contradiction that neither $M_1$ nor $M_2$ is isomorphic to $U_{2,4}$.  
By Propositions \ref{prop:Bixby_U24s_use_every_element} and \ref{prop:Brylawski/Seymour} we may choose an element $f \in E(M_1) \setminus \{e_1\}$ so that a matroid $M_1'$ obtained from $M_1$ by either deleting or contracting $f$ is connected and has $U_{2,4}$ as a minor.  
Since $M_1' \twosume{e_1}{e_2} M_2$ is a minor of $M_1 \twosume{e_1}{e_2} M_2$, by minimality $M_1' \twosume{e_1}{e_2} M_2$ is frame.  
By Theorem \ref{thm:biased2sum}, $(M_2, \{e_2\})$ is frame.  
Similarly, $(M_1, \{e_1\})$ is frame.  
Hence by Thereom \ref{thm:biased2sum}, $M_1 \twosume{e_1}{e_2} M_2$ is frame, a contradiction.  
\end{proof}

The final lemma we need to prove Theorem \ref{thm:exclminr2sumu24} tells us that in our current setting, 2-separations having one side just a 3-circuit cannot interact with each other.  
The complement of a subset $A \subseteq E$ is denoted $A^c$.  

\begin{lem} \label{lem:disjtu24}
Let $M$ be a connected matroid on $E$ with $|E| \ge 6$ and assume that for every 2-separation $(A, A^c)$ of $M$, one of $M[A]$ or $M[A^c]$ is a circuit of size 3.  If $(A, A^c)$ and $(B, B^c)$ are 2-separations with both $M[A]$ and $M[B]$ a circuit of size 3, then either $A = B$ or $A \cap B = \emptyset$.
\end{lem}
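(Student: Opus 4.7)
The plan is to argue by contradiction using submodularity of the connectivity function, namely $\lambda_M(X) + \lambda_M(Y) \geq \lambda_M(X \cap Y) + \lambda_M(X \cup Y)$ for all $X, Y \subseteq E$, together with the self-duality $\lambda_M(X) = \lambda_M(X^c)$. Suppose for contradiction that $A \neq B$ and $A \cap B \neq \emptyset$; since $|A| = |B| = 3$, this forces $|A \cap B| \in \{1, 2\}$. Throughout I will use that, because $M$ is connected with $|E| \geq 6$, every proper nonempty $X \subseteq E$ satisfies $\lambda_M(X) \geq 2$, so any $X$ with $|X|, |X^c| \geq 2$ and $\lambda_M(X) = 2$ forms one side of a 2-separation, which by hypothesis must have at least one side a 3-circuit.

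When $|A \cap B| = 2$, submodularity applied directly to $A$ and $B$ gives $4 = \lambda_M(A) + \lambda_M(B) \geq \lambda_M(A \cap B) + \lambda_M(A \cup B) \geq 2 + 2$, so $\lambda_M(A \cap B) = 2$. This produces a 2-separation with side-sizes $2$ and $|E| - 2 \geq 4$, neither of which can be a 3-circuit---contradiction.

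When $|A \cap B| = 1$, the analogous move is to apply submodularity to $A$ and $B^c$ (rather than $A$ and $B$), since then $A \cap B^c = A \setminus B$ has size $2$, the right size to be a side of a 2-separation. Using $\lambda_M(B^c) = \lambda_M(B) = 2$, submodularity forces $\lambda_M(A \setminus B) = 2$, yielding a 2-separation with sides of sizes $2$ and $|E| - 2 \geq 4$; again neither side is a 3-circuit---contradiction.

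The step I expect to require the most care is choosing the right pair of sets for submodularity in the second case: feeding $A$ and $B$ directly into submodularity produces only $\lambda_M(A \cup B) \leq 2$ on a set of size $5$, which is consistent with $(A \cup B)^c$ being a 3-circuit when $|E| = 8$ and yields no immediate contradiction. Replacing $B$ by $B^c$ shrinks the intersection down to size $2$, which is exactly where the ``one side is a 3-circuit'' hypothesis fails to accommodate either side and the contradiction appears. The rest is just bookkeeping on set sizes, all of which comes directly from $|E| \geq 6$.
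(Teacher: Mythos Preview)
Your proof is correct and follows the same overall strategy as the paper: split on $|A \cap B| \in \{1,2\}$ and produce a 2-separation with a side of size $2$, which neither side can be a 3-circuit. For $|A \cap B| = 2$ your argument is identical to the paper's. For $|A \cap B| = 1$ the paper instead observes directly that the single common element $e$ is spanned by $B \setminus \{e\} \subseteq A^c$ (since $B$ is a circuit), so $r(A^c \cup \{e\}) = r(A^c)$ and hence $(A \setminus \{e\}, A^c \cup \{e\})$ is a 2-separation. Your use of submodularity on $A$ and $B^c$ reaches the same set $A \setminus B$ without invoking that $B$ is a circuit, which is a mild but pleasant uniformisation of the two cases.
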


\begin{proof}
Suppose for a contradiction that $\emptyset \neq A \cap B \neq A$.  
We consider two cases depending on the size of $A \cap B$.
If $|A \cap B| = 1$ then let $A \cap B = \{e\}$ and consider the separation $(A \setminus \{e\}, A^c \cup \{e\})$.  
Since $B \setminus \{e\}$ spans $e$, $r(A^c) = r(A^c \cup \{e\})$.  
But this implies that $(A \setminus \{e\}, A^c \cup \{e\})$ is a 2-separation, a contradiction as neither side has size three.

Next suppose $|A \cap B| = 2$.  
Then, summing the orders of the separations $(A \cap B, A^c \cup B^c)$ and $(A \cup B, A^c \cap B^c)$, by submodularity, we have 
\begin{align*} 
\lambda_M(A \cap B, &A^c \cup B^c)) + \lambda_M(A \cup B, A^c \cap B^c) \\ 
&= r(A \cap B) + r(A^c \cup B^c) + r(A \cup B) + r(A^c \cap B^c) - 2 r(M) + 2 \\
&\leq  r(A) + r(A^c) + r(B) + r(B^c) - 2 r(M) + 2 \\ 
&= \lambda_M(A,A^c) + \lambda_M(B,B^c)=4.
\end{align*}
As $M$ is connected, each of $\lambda_M(A \cap B, A^c \cup B^c))$ and $\lambda_M(A \cup B, A^c \cap B^c)$ is at least two, so this implies that $(A \cap B, A^c \cup B^c)$ is a 2-separation, again a contradiction.  
\end{proof}

\begin{proof} [Proof of Theorem \ref{thm:exclminr2sumu24}]
Let $M$ be an excluded minor for the class frame matroids, and suppose $M$ has connectivity 2 and $M \notin \Ee_0$.  
By Lemma \ref{lem:none0struc}, whenever $M$ is written as a 2-sum, each term of the sum is non-binary, and by Lemma \ref{lem:oneisu24} one of these terms is isomorphic to $U_{2,4}$.  
Hence every 2-separation $(A, A^c)$ of $M$ has one of $M[A]$ or $M[ A^c ]$ a circuit of size 3.  
By Lemma \ref{lem:disjtu24} the 3-circuits corresponding to these $U_{2,4}$ minors are pairwise disjoint.  
Therefore we may write $M = N \gltwosum{L}$, where $N$ is a 3-connected matroid. 
\end{proof}

\subsection{Excluded minors for the class of frame matroidals}
\label{sec:l_excluded_minors}

Theorem \ref{thm:exclminr2sumu24} says that every excluded minor of connectivity 2 for the class of frame matroids that is not in $\Ee_0$ can be expressed in the form $N \gltwosum{L}$, where $N$ is a 3-connected frame matroid.  
In this section we equate the problem of representing a matroid of this form as a biased graph to frame matroidals.  
We begin with the following key result.  

\begin{thm} \label{thm:u24eqlb}
Let $N$ be a matroid and let $L \subseteq E(N)$.  
Then $N \gltwosum{L}$ is frame if and only if the matroidal $(N,L)$ is frame.
\end{thm}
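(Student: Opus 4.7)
The plan is to prove the two directions separately, with all the work going into the reverse direction.

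For the forward direction, I would begin from an $L$-biased representation $\Omega$ of $N$ and iteratively loop-sum $\Omega$ with copies of a fixed biased graph representation of $U_{2,4}$ that carries a designated unbalanced loop. Such a representation is easy to exhibit: take two vertices $u,v$, an unbalanced loop $f$ at $u$, and three parallel links between $u$ and $v$ with all three digons declared unbalanced; the resulting frame matroid has as circuits the three tight handcuffs through $f$ together with the contrabalanced theta formed by the three links, giving exactly $U_{2,4}$. For each $\ell \in L$, loop-summing $\Omega$ on $\ell$ with this copy of $U_{2,4}$ on $f$ yields, by Proposition \ref{prop:biasedgraph_twosums}, a biased graph representing the corresponding matroid 2-sum; iterating over $L$ produces a biased graph representing $N \gltwosum{L}$.

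For the reverse direction I propose to induct on $|L|$, proving the slightly stronger statement: for all disjoint $L_a, L_b \subseteq E(N)$, if $(N \gltwosum{L_a}, L_b)$ is a frame matroidal then so is $(N, L_a \cup L_b)$. The base case $|L_a|=0$ is immediate. For the inductive step, write $L_a = L_a' \cup \{\ell\}$, set $M_1 = N \gltwosum{L_a'}$, and note that $N \gltwosum{L_a} = M_1 \twosume{\ell}{f} U_{2,4}$. Applying Lemma \ref{lem:workhorse2sum} to this 2-sum with $L_1 = L_b$, $L_2 = \emptyset$ and base-point $\ell$ (valid since $\ell \notin L_b$), we land in one of two cases: either $(M_1, L_b \cup \{\ell\})$ and $(U_{2,4}, \{f\})$ are both frame matroidals, or one summand is graphic with its $L_i$ empty. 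The matroidal $(U_{2,4}, \{f\})$ is frame by the representation exhibited above, so in the first case the inductive hypothesis applied to $(N \gltwosum{L_a'}, L_b \cup \{\ell\})$ immediately gives $(N, L_a \cup L_b)$ frame.

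The main obstacle is the second case. Since $U_{2,4}$ is non-graphic, this case forces $M_1 = N \gltwosum{L_a'}$ to be graphic and $L_b = \emptyset$, and I would need to upgrade the conclusion to $(M_1, \{\ell\})$ being a frame matroidal in order for the inductive hypothesis to close the argument. Here the structure of biased graph representations intervenes: because $\ell$ is the base-point of a 2-sum it is neither a loop nor a coloop of $M_1$, so in any graph $G$ with $M(G) = M_1$ the element $\ell$ is a link with distinct endpoints $u, v$. Pinching $u$ and $v$ produces a biased graph representing $M_1$, and since the cycle $\{\ell\}$ meets both $\delta(u)$ and $\delta(v)$ the resulting loop $\ell$ is unbalanced, exhibiting $(M_1, \{\ell\})$ as a frame matroidal. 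The inductive hypothesis then yields $(N, L_a)$ frame, closing the induction. I do not foresee any serious technical difficulty beyond arranging the inductive statement so that both branches of Lemma \ref{lem:workhorse2sum} feed back into it.
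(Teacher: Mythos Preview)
Your proof is correct and follows essentially the same approach as the paper: both arguments peel off one $U_{2,4}$ summand at a time using Lemma~\ref{lem:workhorse2sum}, the paper presenting this as a single chain of equivalences while you separate the two implications and package the harder one as an induction on~$|L_a|$. Your treatment is in fact slightly more careful on one point: at the very first application of Lemma~\ref{lem:workhorse2sum} (when the distinguished set is empty) condition~1 could in principle hold, and you explicitly dispose of this via the pinching argument to recover condition~2, whereas the paper's chain of ``iff''s passes over this silently.
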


\begin{proof}
Let $L = \{e_1, \ldots, e_k\}$ and repeatedly apply Lemma \ref{lem:workhorse2sum}: 
\begin{align*}
N \gltwosum{L} \mbox{ is frame } 
	& \iff \Big( \big(N \gltwosum{ \{e_1 \ldots e_{k-1}\} } \big) 
		\twosume{e_k}{} U_{2,4} \, , \, \emptyset \Big) \mbox{ is frame }		\\
	& \iff \Big( \big(N \gltwosum{ \{e_1 \ldots e_{k-2} \} } \big) 
		\twosume{e_{k-1}}{} U_{2,4} \, , \, \{e_k\} \Big) \mbox{ is frame }		\\
	& \iff \Big( \big(N \gltwosum{ \{e_1 \ldots e_{k-3}\} } \big) 
		\twosume{e_{k-2}}{} U_{2,4} \, , \, \{e_{k-1},e_k\} \Big) \mbox{ is frame }		\\
	& \qquad\qquad\qquad\qquad \vdots	\\
	& \iff \big(N, \{e_1, \ldots, e_k\} \big)  \mbox{ is frame. }
\qedhere
\end{align*}
\end{proof}

So that we may work directly with matroidals, we now define minors of matroidals.  
Any matroidal $(N,K)$ obtained from a matroidal $(M,L)$ by a sequence of the operations of deleting or contracting an element not in $L$ or removing an element from $L$ is a \emph{minor} of $(M,L)$.  
Clearly, the class of frame matroidals is minor-closed, and so we may ask for its set of excluded minors.  
We have the following immediate corollary of Theorem \ref{thm:u24eqlb}.

\begin{cor} \label{cor:ellexmin}
Let $N$ be a matroid and let $L \subseteq E(N)$.  Then $N \gltwosum{L}$ is an excluded minor for the class of frame matroids if and only if $(N,L)$ is an excluded minor for the class of frame matroidals.  
\end{cor}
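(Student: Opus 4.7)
My plan is to derive the corollary from Theorem \ref{thm:u24eqlb}, which gives the equivalence of frameness for $N \gltwosum{L}$ and the matroidal $(N,L)$. The work lies in matching the proper minors of the two objects.

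In the forward direction, I assume $N \gltwosum{L}$ is an excluded minor for frame matroids. Non-frameness of $(N,L)$ is immediate from Theorem \ref{thm:u24eqlb}. A proper minor of $(N,L)$ has one of three forms: $(N \setminus f, L)$ or $(N/f, L)$ for $f \in E(N) \setminus L$, or $(N, L \setminus \{e\})$ for $e \in L$. By Theorem \ref{thm:u24eqlb}, their frameness is equivalent to frameness of $(N \gltwosum{L}) \setminus f$, $(N \gltwosum{L})/f$, and $N \gltwosum{L \setminus \{e\}}$ respectively. The first two are visibly proper minors of $N \gltwosum{L}$; the third is obtained by contracting one and deleting another of the three new elements of the $U_{2,4}$ summed at $e$ (this reverses a single 2-sum step in the iterated proof of Theorem \ref{thm:u24eqlb}), and so is also a proper minor. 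All three are frame by assumption.

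For the reverse direction, I assume $(N,L)$ is an excluded minor for frame matroidals. Non-frameness of $M := N \gltwosum{L}$ again follows from Theorem \ref{thm:u24eqlb}. Let $M'$ be a proper minor of $M$, and for each $e \in L$ write $T_e$ for the three elements contributed by the $U_{2,4}$ summed at $e$. Partition $L$ into $B := \{e : \text{at least two elements of } T_e \text{ are operated on}\}$, $S := \{e : \text{exactly one element of } T_e \text{ is operated on}\}$, and an untouched remainder. If $B$ is non-empty or some $f \in E(N) \setminus L$ is operated on, then by direct inspection (using that deletion and contraction distribute across 2-sums on elements away from the base points), $M'$ is a minor of $N' \gltwosum{L'}$ for some proper minor $(N', L')$ of $(N,L)$, possibly together with a few added loops arising from leftover $T_e$ elements. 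The matroidal $(N', L')$ is frame by assumption, so Theorem \ref{thm:u24eqlb} gives that $N' \gltwosum{L'}$ is frame, whence so is $M'$. Otherwise $B = \emptyset$ and $S$ is non-empty (since $M'$ is proper), in which case $M'$ equals $N \gltwosum{L \setminus S}$ further 2-summed, at each $e \in S$, with $U_{2,3}$ (if the single operation on $T_e$ was a deletion) or $U_{1,3}$ (if it was a contraction). The matroidal $(N, L \setminus S)$ is a proper minor of $(N,L)$ and so is frame; $N \gltwosum{L \setminus S}$ is frame by Theorem \ref{thm:u24eqlb}; and each successive 2-sum with the graphic matroid $U_{2,3}$ or $U_{1,3}$ preserves frameness by Theorem \ref{thm:biased2sum}.

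The main obstacle is the final sub-case of the reverse direction: one must verify that a proper minor obtained by single deletions or contractions within several $U_{2,4}$ summands really decomposes as a tower of 2-sums with $U_{2,3}$'s and $U_{1,3}$'s on top of the frame matroid $N \gltwosum{L \setminus S}$. This is a direct circuit calculation, comparing $M \setminus a_e$ and $M / a_e$ against the 2-sum of $N \gltwosum{L \setminus \{e\}}$ with $U_{2,3}$ and $U_{1,3}$ at $e$, and amounts to the observation that deleting or contracting a single non-basepoint element of the $U_{2,4}$ factor is the same as replacing that factor by its corresponding single-element deletion or contraction.
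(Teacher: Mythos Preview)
Your argument is correct and follows the natural approach that the paper leaves implicit (the paper simply records the corollary as ``immediate'' from Theorem~\ref{thm:u24eqlb} without spelling out the minor correspondence). Two remarks are worth making.

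First, the reverse direction is overworked. Since the class of frame matroids is minor-closed, to show every proper minor of $M = N \gltwosum{L}$ is frame it suffices to check that $M \setminus g$ and $M/g$ are frame for each single element $g$. If $g \in E(N) \setminus L$ this follows from Theorem~\ref{thm:u24eqlb} applied to the proper matroidal minor $(N \setminus g, L)$ or $(N/g, L)$. If $g \in T_e$ for some $e \in L$, then $M \setminus g$ (respectively $M/g$) is obtained from the frame matroid $N \gltwosum{(L \setminus \{e\})}$ by a series (respectively parallel) extension at $e$, which preserves frameness by the argument of Observation~\ref{obs:ex_min_is_simple_and_cosimple}. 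This avoids your partition into $B$ and $S$ and the attendant bookkeeping about leftover loops and coloops.

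Second, your invocation of Theorem~\ref{thm:biased2sum} in the final sub-case tacitly assumes that $N \gltwosum{(L \setminus S)}$ is connected, since that theorem has connectedness in its hypothesis. This does hold --- if $(N,L)$ is an excluded minor for frame matroidals then $N$ is connected (otherwise $N = N_1 \oplus N_2$ with each $(N_i, L \cap E(N_i))$ a proper frame matroidal minor, forcing $(N,L)$ itself to be frame), and iterated 2-sums with $U_{2,4}$ preserve connectedness --- but you should say so, or sidestep the issue as above by observing that 2-sum with $U_{2,3}$ or $U_{1,3}$ is just a series or parallel extension.
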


Our search for the remaining excluded minors of connectivity 2 for the class of frame matroids is therefore equivalent to the problem of finding excluded minors for the class of frame matroidals.  

There are four ways to represent the 3-circuit $U_{2,3}$ as a biased graph: a balanced triangle, a contrabalanced theta on two vertices, a tight handcuff consisting of an unbalanced 2-cycle together with an unbalanced loop, or as a loose handcuff consisting of a link and two unbalanced loops; no biased graph representation of $U_{2,3}$ has all three elements as unbalanced loops.  
Evidently therefore, $U_{2,3} \gltwosum{E(U_{2,3})}$ is not frame.  
Let us denote this matroid $N_9$. 
\Ie, \[ N_9 = U_{2,3} \gltwosum{ E(U_{2,3}) }.  \]

\begin{prop} \label{prop:C_3_all_edges_in_L_is_L_ex_minor} 
$N_9$ is an excluded minor for the class of frame matroids.
\end{prop}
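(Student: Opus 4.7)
The plan is to invoke Corollary \ref{cor:ellexmin}, which reduces the problem to showing that the matroidal $(U_{2,3}, E(U_{2,3}))$ is an excluded minor for the class of frame matroidals. This splits into two tasks: first, exhibit the non-representability of $(U_{2,3}, E(U_{2,3}))$ as an $E(U_{2,3})$-biased graph, and second, verify that every proper minor of this matroidal is frame.

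For the first task, I would simply quote the list of biased graph representations of $U_{2,3}$ given in the paragraph just before the statement. The four candidates are the balanced triangle, the contrabalanced theta on two vertices, the tight handcuff consisting of an unbalanced digon with an unbalanced loop, and the loose handcuff consisting of one link with two unbalanced loops. In none of them are all three elements unbalanced loops; hence no $E(U_{2,3})$-biased representation exists, so $(U_{2,3}, E(U_{2,3}))$ is not frame, and consequently $N_9$ is not frame.

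For the second task, observe that since $L = E(U_{2,3})$ contains every element of $U_{2,3}$, the only minor operation available from $(U_{2,3}, E(U_{2,3}))$ is to remove an element $e$ from $L$, producing $(U_{2,3}, E(U_{2,3}) \setminus \{e\})$. Again consulting the list above, the loose-handcuff representation of $U_{2,3}$ (link plus two unbalanced loops) is an $(E(U_{2,3}) \setminus \{e\})$-biased graph for $U_{2,3}$, where we take $e$ to be the link. Thus this matroidal is frame, and since the class of frame matroidals is minor-closed, every further minor is also frame.

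The only nontrivial input is the enumeration of biased graph representations of $U_{2,3}$, which has already been handed to us in the preceding paragraph; once that is accepted, both (i) and (ii) are immediate inspection. So there is no real obstacle to overcome, and the proof amounts to little more than unpacking the definitions together with a single application of Corollary \ref{cor:ellexmin}.
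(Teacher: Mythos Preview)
Your proposal is correct and follows essentially the same route as the paper's own proof: both invoke Corollary \ref{cor:ellexmin} to reduce to the matroidal $(U_{2,3},E(U_{2,3}))$, cite the enumeration of biased-graph representations of $U_{2,3}$ from the preceding paragraph to see it is not frame, and then exhibit the loose-handcuff representation to show each proper minor $(U_{2,3},L')$ with $|L'|=2$ is frame. Your observation that removing an element from $L$ is the only available minor operation (since $L=E(U_{2,3})$) is exactly what the paper uses implicitly when it checks only two-element subsets $L'$.
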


\begin{proof} 
By Corollary \ref{cor:ellexmin}, 
$N_9$ is an excluded minor for the class of frame matroids \iiff $(U_{2,3}, E(U_{2,3}))$ is an excluded minor for the class of frame matroidals.  
There is no biased graph representing $U_{2,3}$ in which all three elements are unbalanced loops, so the matroidal $(U_{2,3},E(U_{2,3}))$ is not frame.  
For every two element subset $L \subseteq E(U_{2,3})$ the matroidal $(U_{2,3}, L)$ is frame: a link between two vertices together with an unbalanced loop on each endpoint, where the two unbalanced loops represent the two elements in $L$ is an $L$-biased graph representing $U_{2,3}$.   
\end{proof} 

The matroid $N_9$ is the only excluded minor for the class of frame matroids of the form $N \gltwosum{L}$ with $|L| \geq 3$:  

\begin{thm} \label{thm:Size_L_more_than_3}
Let $N$ be a 3-connected matroid, let $L \subseteq E(N)$, and suppose that $M = N \gltwosum{L}$ is an excluded minor for the class of frame matroids.  If $|L| \geq 3$ then $M \iso N_9$. 
\end{thm}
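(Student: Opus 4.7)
By Corollary \ref{cor:ellexmin}, it suffices to prove that if $(N,L)$ is an excluded minor for the class of frame matroidals with $N$ 3-connected and $|L|\ge 3$, then $(N,L)\iso(U_{2,3}, E(U_{2,3}))$. My starting observation is structural: since $N$ is 3-connected on at least three elements it is simple, and therefore in any biased graph representing a frame matroidal $(N,L')$ the unbalanced loops corresponding to $L'$ must sit at pairwise distinct vertices (otherwise two would be parallel in $N$); the rank formula $r(X)=|V(X)|-b(X)$ then yields $r_N(L')=|L'|$. Hence whenever $(N,L')$ is frame and $N$ is simple, $L'$ is independent in $N$. Applying this to the frame proper minors $(N, L\setminus e)$ for each $e\in L$ shows every $L\setminus e$ is independent, so $L$ is either independent or a circuit of $N$.

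I treat the circuit case first. If $L$ is a circuit and there exists $f\in E(N)\setminus L$, then $(N\setminus f,L)$ is a proper minor of $(N,L)$ and hence frame; but $L$ remains a circuit of the simple matroid $N\setminus f$, contradicting the observation that $L$ must then be independent in $N\setminus f$. Hence $L=E(N)$, and since $L$ is a circuit while each $L\setminus e$ is independent, $N\iso U_{|L|-1,|L|}$. A partition of $E(U_{n-1,n})$ into a pair and its complement is a 2-separation for every $n\ge 4$, so 3-connectivity of $N$ forces $|L|=3$ and $N\iso U_{2,3}$, giving $(N,L)\iso(U_{2,3}, E(U_{2,3}))$.

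Ruling out the independent case is the main obstacle. Fix $e\in L$ and a representation $\Om$ of $(N,L\setminus e)$, so $L\setminus e$ is realised as unbalanced loops at distinct vertices $\{v_f:f\in L\setminus e\}$. Since $(N,L)$ is not frame, $e$ is not an unbalanced loop in $\Om$, and since $e$ is not a loop of $N$ (as $L$ is independent) $e$ is not a balanced loop either; thus $e=uw$ is a link of $\Om$. A rank computation applied to any three-subset $\{e,a,b\}\subseteq L$ forces $e$ to have at most one endpoint in $V(L\setminus e)$, since otherwise $\Om[\{e,a,b\}]$ is a single unbalanced component on two vertices of rank $2$, while independence of $\{e,a,b\}\subseteq L$ demands rank $3$. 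My plan is to convert $e$ into an unbalanced loop of $\Om$ via the twisted-flip and roll-up operations of Section \ref{sec:Preliminaries}, producing a representation of $(N,L)$ and contradicting the assumption that $(N,L)$ is not frame.

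In the subcase that an endpoint of $e$ coincides with some loop-vertex $v_a$, contracting the loop $a$ in $\Om$ already converts $e$ into an unbalanced loop; the challenge will be to ``uncontract'' this to get a representation of $N$ itself rather than of $N/a$, which I would do by exploiting the alternative representation $\Om^a$ of $(N,L\setminus a)$ and the constraints that 3-connectivity of $N$ places on how $\Om$ and $\Om^a$ can differ. The harder subcase is when neither endpoint of $e$ lies in $V(L\setminus e)$: here I would search for a balancing vertex of $\Om\setminus(L\setminus e)$ along which a twisted flip converts $e$ into an unbalanced loop, and if no such vertex exists I would use that absence to expose a 2-separation of $N$ contradicting 3-connectivity. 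The technical heart of the argument is thus the coordination of multiple representations $\Om^e$ for the various $e\in L$ together with the twisted-flip machinery, and this is where I expect the proof to be the longest.
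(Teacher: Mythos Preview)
Your circuit case is fine, but the independent case is not a proof: it is a plan to attempt a twisted-flip or roll-up conversion of the link $e$ into an unbalanced loop, with no argument that such a conversion is always available. You acknowledge this yourself (``this is where I expect the proof to be the longest''), and nothing in Section~\ref{sec:Preliminaries} guarantees that a suitable balancing vertex or flip structure exists in an arbitrary $\Om$. So as it stands there is a genuine gap.

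More to the point, the bifurcation into circuit versus independent is unnecessary, and the paper's proof avoids it entirely with a two-line argument you are missing. Take your representation $\Om$ of $(N,L\setminus e_1)$: as you already observed, $e_1$ is a link and $e_2,e_3$ are unbalanced loops at distinct vertices. Since $N$ is 3-connected, $\Om$ is 2-connected, so there exist two vertex-disjoint paths from the endpoints of $e_1$ to the loop-vertices of $e_2$ and $e_3$. Contracting these paths (none of whose edges lie in $L$, since elements of $L\setminus e_1$ are loops) produces a minor of $N$ in which $\{e_1,e_2,e_3\}$ forms a loose handcuff, hence a $U_{2,3}$-circuit. Thus $(U_{2,3},\{e_1,e_2,e_3\})$ is a matroidal minor of $(N,L)$; since this matroidal is itself an excluded minor (Proposition~\ref{prop:C_3_all_edges_in_L_is_L_ex_minor}), minimality forces $(N,L)\iso(U_{2,3},E(U_{2,3}))$. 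This handles both your cases at once and makes the rank and twisted-flip considerations superfluous.
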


\begin{proof} 
Let $L = \{e_1, \ldots, e_k\}$.  By Corollary \ref{cor:ellexmin}, $(N,L)$ is an excluded minor for the class of frame matroidals.  
By minimality then, $(N, \{e_2, \ldots, e_k\})$ is frame.  
Let $\Om$ be a $\{e_2, \ldots, e_k\}$-biased graph representing $(N, \{e_2, \ldots, e_k\})$.   
In $\Omega$, edges $e_2, e_3$ are unbalanced loops and $e_1$ is a link.  
Since $N$ is 3-connected, $\Omega$ is 2-connected.  
Hence there are disjoint paths $P$, $Q$ linking the endpoints of $e_1$ and the vertices incident to $e_2$ and $e_3$.  
Contracting $P$ and $Q$ yields $U_{2,3}$ as a minor with $E(U_{2,3}) = \{e_1, e_2, e_3\}$.  
By minimality and Proposition \ref{prop:C_3_all_edges_in_L_is_L_ex_minor} therefore, $N \iso U_{2,3}$ and $L = \{e_1, e_2, e_3\}$.  
\end{proof}

\section{Proof of Theorem \ref{thm:two_sep_ex_min_main}} \label{sec:Main_theorem}

We are now ready to prove Theorem \ref{thm:two_sep_ex_min_main}.  

\bigskip 
\noindent \textbf{Theorem \ref{thm:two_sep_ex_min_main}.}  \hspace{-10pt}  
\textit{
Let $M$ be an excluded minor for the class of frame matroids, and suppose $M$ is not 3-connected.  
Then either $M$ is isomorphic to a matroid in $\Ee$ 
or $M$ is the 2-sum of a 3-connected non-binary frame matroid and $U_{2,4}$.  }
\bigskip 

The set $\Ee$ of excluded minors in the statement of Theorem \ref{thm:two_sep_ex_min_main} contains $\Ee_0$ and $N_9$.  
In this section we exhibit the remaining matroids in $\Ee$, and show that any other excluded minor of connectivity 2 is a 2-sum of a 3-connected non-binary matroid and $U_{2,4}$.  
We do this using matroidals.  
We show that the nine matroidals $\Mm_0, \ldots, \Mm_8$ illustrated in Figure \ref{matroidals1} are excluded minors for the class of frame matroidals.  
Each matroidal $\Mm_i {=} (M_i, L_i)$, $i \in \{0, \ldots, 8\}$, is given as the frame matroid $M_i = F(\Omega_i)$ represented by a biased graph $\Omega_i=(G,\Bb)$, where the graph $G$ is shown in Figure \ref{matroidals1} and collections $\Bb$ are as listed.  
Each matroidal's set $L_i$ is the set $\{e_1, e_2\}$, consisting of the pair of elements represented by edges $e_1$, $e_2$ in each graph.  

\begin{figure}[tbp] 
\begin{center} 
\includegraphics[scale=0.8]{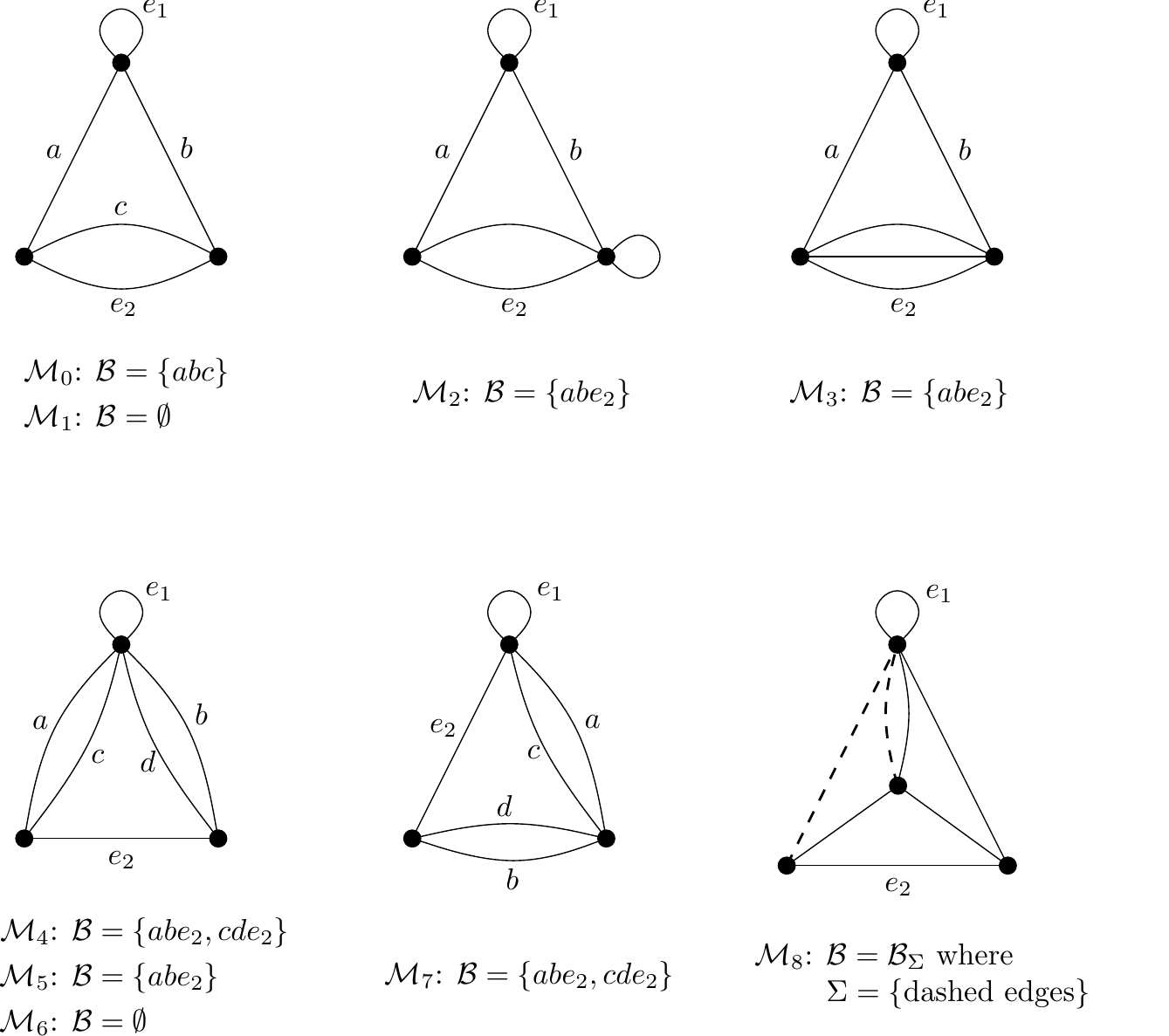}
\end{center} 
\caption{Excluded minors for the class of frame matroidals with $|L|>1$.}
\label{matroidals1}
\end{figure} 

Note that the excluded minor $N_9$ is given by the matroidal $\Mm_0{=}(M_0, L_0)$: $M_0 \gltwosum{L_0} \iso N_9$ (it is straightforward to verify that the circuits of $M_0 \gltwosum{L_0}$ and those of $N_9$ coincide).  
In fact, $U_{2,3}$ gives rise to four excluded minors for the class of frame matroidals, each yielding $N_9$ as corresponding excluded minor for the class of frame matroids, as follows.  
Write $E=E(U_{2,3})$, choose a subset $S \subseteq E$, and let $N = U_{2,3} \gltwosum{S}$.  
Set $L = E \setminus S$.  
Then $N \gltwosum{L} \iso N_9$ and matoridal $(N,L)$ is an excluded minor for the class of frame matroidals.  
The four choices for the size of $S$ each thereby yield an excluded minor for the class of frame matroidals.  

Matroidals $\Mm_1, \ldots, \Mm_8$ give rise to excluded minors for the class of frame matroids that we have not yet encountered.  
Let $\Ee_1 = \{ N \gltwosum{L} \mid (N,L) \in \{ \Mm_0, \ldots, \Mm_8\} \}$.  
Let $\Ee = \Ee_0 \cup \Ee_1$.  

The hard work in proving Theorem \ref{thm:two_sep_ex_min_main} is in showing that $\{\Mm_1, \ldots, \Mm_8\}$ is the complete list of excluded minors for the class of frame matroidals having $|L|=2$.  
This is the content of Lemma \ref{lem:workhorse}.  

\begin{lem} \label{lem:workhorse}
Let $N$ be a 3-connected matroid and let $L \subseteq E(N)$ with $|L| = 2$.  If $(N,L)$ is an excluded minor for the class of frame matroidals, then it is isomorphic to one of $\Mm_1, \ldots, \Mm_8$.
\end{lem}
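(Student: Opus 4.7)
Writing $L = \{e_1, e_2\}$, the minimality of $(N,L)$ as an excluded minor for the class of frame matroidals yields immediately that the proper minors $(N,\{e_1\})$ and $(N,\{e_2\})$ (each obtained by removing an element from $L$) are both frame. Choose biased graph representations $\Omega_1, \Omega_2$ of $N$ in which $e_1$ and $e_2$ respectively appear as unbalanced loops. Since $(N,L)$ is not frame, no representation of $N$ has both elements as unbalanced loops; in particular, $e_2$ is a link in $\Omega_1$ and $e_1$ is a link in $\Omega_2$. Because $N$ is 3-connected and simple, each $\Omega_i$ is 2-connected. Moreover, for every $e \in E(N) \setminus L$, both $(N \setminus e, L)$ and $(N/e, L)$ are proper minors of the matroidal $(N,L)$, and hence admit biased graph representations in which both $e_1$ and $e_2$ are unbalanced loops.

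First, I would analyse $\Omega_1$ in the neighbourhood of $e_1$ and $e_2$, and ask whether any of the transformations developed in Section \ref{sec:Preliminaries} — the pinch/split, rolling up a b-class of a balancing vertex, and more generally the twisted flip of Theorem \ref{prop:flip_operation} — can be applied to convert $\Omega_1$ into a representation of $N$ in which $e_2$ is also an unbalanced loop. Success directly contradicts the non-frameness of $(N,L)$; failure forces tight local structure around $e_1$ and $e_2$ in $\Omega_1$ (certain balancing vertices, contrabalanced thetas, or short biseparations). A symmetric analysis applies to $\Omega_2$ relative to $e_1$. Next, I would leverage the single-element minimality: for each $e \in E(N) \setminus L$, a representation of $N \setminus e$ (respectively $N / e$) with both $e_1, e_2$ as unbalanced loops must fail to lift across $e$ to a representation of $N$, and this failure pins down where $e$ can attach to the local obstruction structures already identified at $e_1$ and $e_2$. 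Combined with the 3-connectedness of $N$, this should bound $|E(N)|$ by a small constant, reducing the problem to a finite search over 3-connected frame matroids $N$ of small size together with a distinguished pair $L \subseteq E(N)$. One then verifies by inspection that the surviving matroidals are exactly $\Mm_1, \ldots, \Mm_8$.

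The main obstacle is the structural analysis of the preceding step. The twisted flip generates many pairwise non-isomorphic biased graphs representing the same 3-connected frame matroid, and one must preclude the existence of \emph{any} such representation placing both $e_1$ and $e_2$ as unbalanced loops. Coupled with the flexibility available in representations of $N \setminus e$ and $N / e$ — each of which does admit such a representation, none of which extends back to $N$ — this produces a substantial case analysis driven by the relative positions of $e_1$ and $e_2$, the b-class partitions at any balancing vertices of $\Omega_1$ and $\Omega_2$, and the behaviour of biseparations along the way. Organising these cases so that every configuration not yielding one of $\Mm_1, \ldots, \Mm_8$ can be collapsed — via an isomorphism, a twisted flip, or a roll-up/unroll — into one where either a lifting to a representation of $N$ with both $e_1, e_2$ as unbalanced loops succeeds, or the excluded-minor hypothesis otherwise fails, is where the bulk of the work must lie.
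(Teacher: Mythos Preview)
Your proposal is not a proof but a strategy sketch, and the strategy you outline diverges from the paper's at the crucial point. You propose to use the single-element minimality of $(N,L)$ to constrain how each $e \in E(N) \setminus L$ can attach, and from this to extract a bound on $|E(N)|$, reducing to a finite check. The paper never bounds $|E(N)|$, and it is not at all clear that your approach would yield such a bound: the failure of a representation of $N \setminus e$ or $N/e$ to lift back across $e$ does not obviously pin $e$ to a bounded-size neighbourhood of $\{e_1,e_2\}$, and you offer no mechanism for it to do so.

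What the paper actually does is the following. Working in a single $\{e_1\}$-biased representation $\Omega$ of $N$ (with $e_1$ an unbalanced loop at a vertex $v$), it first disposes of the graphic case via a rooted-$K_4$/$W_4$ lemma. It then introduces a finite list of \emph{configurations} $\Cc_1,\ldots,\Cc_8,\Dd_1,\ldots,\Dd_3$ --- small graphs with two distinguished edges and a prescribed set of unbalanced cycles --- and proves that any biased graph realising one of them has some $\Mm_i$ as a matroidal minor. The body of the argument is a case analysis (on whether $e_2$ is adjacent to $v$, and on the unbalanced-cycle structure of $\Omega - v$) showing that either $\Omega$ contains a bad configuration as a minor, or else $\Omega$ has enough structure that a twisted flip produces an $L$-biased representation of $N$. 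The twisted flip is used only in the final subcase; the heavy lifting is the configuration-hunting, which directly certifies a forbidden minor without any size bound on $N$. Your sketch correctly anticipates that the twisted flip and roll-up operations are relevant, but misses the key device --- the configurations --- that organises the case analysis and replaces your proposed finite search.
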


Before proving Lemma \ref{lem:workhorse}, let us show that it implies Theorem \ref{thm:two_sep_ex_min_main}.  

\begin{proof}[Proof of Theorem \ref{thm:two_sep_ex_min_main}]
Let $M$ be an excluded minor for the class of frame matroids, and suppose $M$ is not 3-connected.  
By Theorem \ref{thm:exclminr2sumu24}, either $M$ is isomorphic to a matroid in $\Ee_0$ or $M = N \gltwosum{L}$ for a 3-connected frame matroid $N$ and a nonempty set $L$.  
So suppose $M \notin \Ee_0$.  
By Theorem \ref{thm:Size_L_more_than_3}, if $|L| \geq 3$ then $M \iso N_9$.  
If $|L|=1$ then $M$ is a 2-sum of $N$ and $U_{2,4}$, and by Lemma \ref{lem:none0struc} $N$ is a non-binary.  
Finally, if $|L|=2$, then by Corollary \ref{cor:ellexmin} $(N,L)$ is an excluded minor for the class of frame matroidals.  
By Lemma \ref{lem:workhorse}, $M$ is isomorphic to a matroid in $\Ee_1$.  
\end{proof}

\subsection{The excluded minors $\Ee_1$} 

Let us substantiate our claim that the matroids in $\Ee_1$ are excluded minors for the class of frame matroids.  

Say a matroid $M$ \emph{series reduces} to a matroid $M'$ if $M'$ may be obtained from $M$ by repeatedly contracting elements contained in a nontrivial series class.  
Series reduction of matroids is useful because matroidals consisting of a rank 2 matroid with a distinguished subset $L$ of size 2 are aways frame: 

\begin{lem} \label{lem:rank2L2matroidalsareframe}
Let $(N,L)$ be a matroidal.  If $N$ has rank $2$ and $|L| = 2$, then $(N,L)$ is frame.
\end{lem}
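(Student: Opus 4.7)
The plan is to construct, directly, a biased graph $\Omega$ on two vertices that represents $N$ with $e_1$ and $e_2$ as unbalanced loops. Because an unbalanced loop is a non-loop of the corresponding frame matroid, I may assume $e_1$ and $e_2$ are non-loops of $N$, as otherwise no $L$-biased representation can exist. Since $r(N) = 2$, the ground set decomposes as $J \cup P_1 \cup \cdots \cup P_k$ with $k \geq 2$, where $J$ is the set of matroid loops of $N$ and the $P_i$ are the parallel classes of non-loops; after relabelling, $e_1 \in P_1$ and $e_2 \in P_j$ for some $j$.

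I would take $V(\Omega) = \{u,v\}$ and place the elements of $N$ as follows. Every element of $P_1$ becomes an unbalanced loop at $u$; if $j \neq 1$, every element of $P_j$ becomes an unbalanced loop at $v$; every element of each remaining parallel class is placed as a link between $u$ and $v$; and every matroid loop of $N$ is placed as a balanced loop at $u$. Finally, I would declare a 2-cycle formed by two $uv$-links to be balanced exactly when the two links belong to a common parallel class of $N$.

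Two checks remain. For the theta property: because $|V(\Omega)| = 2$, every theta subgraph of $\Omega$ consists of three $uv$-links, and the bias rule above reflects the equivalence relation ``same parallel class of $N$'' on those links, so each such theta has $0$, $1$, or $3$ balanced cycles, never exactly $2$. To show $F(\Omega) = N$, I would use that both are matroids of rank $2$ on the same ground set, so it suffices to show they share the same set of loops and the same partition into parallel classes. The loops of $F(\Omega)$ are its balanced loops, which by construction are exactly $J$; and the 2-circuits of $F(\Omega)$ come either from pairs of unbalanced loops at a common vertex (recovering pairs within $P_1$, and within $P_j$ when $j \neq 1$) or from balanced 2-cycles of $uv$-links (recovering pairs within each remaining $P_i$)---precisely the 2-circuits of $N$. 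Since $e_1$ and $e_2$ were placed as unbalanced loops, $\Omega$ is $L$-biased, so $(N,L)$ is frame.

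The main obstacle is really only bookkeeping: keeping the construction uniform across the cases $j = 1$ and $j \neq 1$, and checking that degenerate configurations (parallel classes of size one, no matroid loops, or $k = 2$ with no $uv$-links present at all) are all covered by the same prescription. No deeper combinatorial argument should be required.
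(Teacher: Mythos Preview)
Your proof is correct and follows essentially the same approach as the paper's: both build a two-vertex biased graph in which the parallel class of $e_1$ is realised by unbalanced loops at one vertex, the parallel class of $e_2$ (if distinct) by unbalanced loops at the other, and the remaining parallel classes by $uv$-links whose $2$-cycles are balanced precisely when the two links lie in the same class. The paper phrases this by first representing the simplification $U_{2,m}$ and then adding parallel elements, whereas you construct $\Omega$ in one pass; your version is arguably cleaner, since by placing \emph{all} of $P_j$ as unbalanced loops at $v$ you sidestep the awkwardness in the paper's construction of adding a $uv$-link parallel to the loop $e_2$.
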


\begin{proof}
We may assume $N$ has no loops.  
Let $L=\{e_1, e_2\}$.  
Since $N$ has rank 2, $N$ is obtained from some uniform matroid $U_{2,m}$ by adding elements in parallel.  
We may assume that either $e_1, e_2 \in E(U_{2,m})$ or that $e \in E(U_{2,m})$ and $e_1$ and $e_2$ are in the same parallel class.  
Let $\Om$ be the contrabalanced biased graph representing $U_{2,m}$ with $V(\Om)=\{u,v\}$, $e_1$ a loop incident to $u$, $e_2$ a loop incident to $v$ if $e_2 \in E(U_{2,3})$, and all other elements represented by $u$-$v$ edges.  
Let $\Om'$ be the biased graph obtained by adding each element $f \in E(N) \setminus E(U_{2,m})$ in the same parallel class as an element $e \not= e_1$ as a $u$-$v$ edge and declaring circuit $ef$ balanced, and adding each element in $E(N) \setminus E(U_{2,3})$ in the same parallel class as $e_1$ as an unbalanced loop incident to $u$.  
Then $\Om'$ is an $L$-biased representation of $N$.  
\end{proof}

This tool in hand, we may now prove:

\begin{prop} \label{prop:excluded_matroidals} 
The matroidals $\Mm_0, \ldots, \Mm_7$ are excluded minors for the class of frame matroidals.
\end{prop}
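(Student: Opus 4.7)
The plan is to verify, for each matroidal $\Mm_i = (M_i, L_i)$ with $0 \le i \le 7$, the two defining conditions of being an excluded minor for the class of frame matroidals: (a) $\Mm_i$ is not frame, and (b) every proper minor of $\Mm_i$ is frame. A proper minor of $(M,L)$ is obtained either by deleting or contracting an element of $E(M) \setminus L$, or by removing an element from $L$, so these three operations are what must be checked.

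For (b), the ``remove from $L_i$'' case is immediate: the biased graph $\Omega_i$ of Figure \ref{matroidals1}, being an $L_i$-biased representation of $M_i$, is automatically an $(L_i \setminus \{e_j\})$-biased representation, so $(M_i, L_i \setminus \{e_j\})$ is frame. For the deletion and contraction cases, I would proceed element by element and exhibit a biased graph representation of $M_i \setminus e$ or $M_i / e$ in which both elements of $L_i$ remain unbalanced loops. In many cases the minor has rank $2$, and Lemma \ref{lem:rank2L2matroidalsareframe} applies directly; the remaining cases should be dispatched by applying the graph minor operation of Section \ref{sec:standardnotionsbiasedgraphsminorsconnectivity} to $\Omega_i$ itself and observing that the two unbalanced loops persist.

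For (a), the task is to rule out the existence of any biased graph $\Psi$ with $F(\Psi) \iso M_i$ in which both $e_1$ and $e_2$ are unbalanced loops. The strategy is contradiction: assume such a $\Psi$ exists, and use the classification of circuits of a frame matroid (cases (1)--(4) on page \pageref{pagerefforbiaedgraphconstruction}) to force inconsistency with the actual circuit structure of $M_i$ read off from $\Omega_i$. A pair of unbalanced loops creates circuits of very restricted form—either tight/loose handcuffs through the two loops, or circuits requiring specific paths between their basepoints—so the combinatorial possibilities for $\Psi$ are heavily constrained. To handle the flexibility in representations of a fixed frame matroid, I would use the operations of Section \ref{sec:Preliminaries}, in particular pinches, roll-ups/unrollings, and the general twisted flip of Theorem \ref{prop:flip_operation}, to move between candidate representations and reduce to a canonical form that can be compared against $\Omega_i$.

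The main obstacle is the exhaustive case analysis required for (a): each of the eight matroidals must be treated individually, and in every case one must rule out every twisted-flip-equivalent representation. Each individual check is finite since the matroids $M_i$ have small rank and are given by explicit small biased graphs, but organising the argument cleanly—using 2-separations, signed-graph structure via Proposition \ref{prop:If_no_odd_theta}, and equivalence classes at balancing vertices—is where the technical work concentrates.
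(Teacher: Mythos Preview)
Your proposal contains a logical inconsistency in part (b). You write that $\Omega_i$ of Figure \ref{matroidals1} is ``an $L_i$-biased representation of $M_i$,'' meaning both $e_1$ and $e_2$ are unbalanced loops there. But if that were true, $\Mm_i$ would itself be frame, contradicting what you are trying to establish in part (a). In fact in each $\Omega_i$ only $e_1$ appears as an unbalanced loop; $e_2$ is a link. Consequently $\Omega_i$ certifies only that $(M_i, \{e_1\})$ is frame, not $(M_i, \{e_2\})$. The paper handles the latter by exhibiting a \emph{second} biased representation of each $M_i$ (Figure \ref{matroidals1_alt_reps}) in which $e_2$ is the unbalanced loop. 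The same misunderstanding undermines your plan for deletions: you cannot ``apply the graph minor operation to $\Omega_i$ and observe that the two unbalanced loops persist,'' because there is only one loop to begin with. The paper's device here is instead to note that each $M_i \setminus e$ series-reduces by a single contraction to a rank-$2$ matroid still containing $e_1, e_2$, invoke Lemma \ref{lem:rank2L2matroidalsareframe} there, and then coextend the resulting $L$-biased representation back up.

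For part (a) your exhaustive case analysis via twisted flips could in principle succeed, but it is far heavier than needed. The paper gives a uniform three-line argument covering all of $\Mm_1, \ldots, \Mm_7$ at once: in any $L_i$-biased representation $\Omega$, since $M_i$ has rank $3$ there are exactly three vertices; since $\{e_1, e_2\}$ is not a circuit the loops $e_1, e_2$ sit at distinct vertices $v_1, v_2$; since no $3$-element circuit of $M_i$ contains both $e_1$ and $e_2$ there is no $v_1 v_2$ link; hence the third vertex is a cut vertex of $\Omega$, contradicting $3$-connectedness of $M_i$. The key observation you are missing is that the absence of a $3$-circuit through $\{e_1, e_2\}$---read off directly from the $\Omega_i$---forces the two loop-vertices to be non-adjacent in any putative $L_i$-biased representation.
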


\begin{proof} 
That $\Mm_0$ is an excluded minor follows immediately from Corollary \ref{cor:ellexmin}, Proposition \ref{prop:C_3_all_edges_in_L_is_L_ex_minor}, and the fact that $M_0 \gltwosum{L_0} \iso N_9$.  
So suppose for a contradiction that for some $i \in \{1, \ldots, 8\}$, $\Om$ is a $L_i$-biased graph representing $\Mm_i {=} (M_i, L_i) \in \{\Mm_1, \ldots, \Mm_7\}$.  
Let $e_1, e_2$ be the elements in $L_i$.   
For $j \in \{1,2\}$, let $v_j$ be the vertex of $\Om$ incident to $e_j$.  
Since $\{e_1, e_2\}$ is not a circuit, $v_1 \not= v_2$.  
Since each of $M_1, \ldots, M_7$ has rank 3, $|V(\Om)|=3$; let $u$ be the third vertex of $\Om$.  
Since none of $M_1, \ldots, M_7$ has a circuit of size three containing $e_1$ and $e_2$, there cannot be an edge linking $v_1$ and $v_2$.  
But then $u$ is a cut-vertex of $\Omega$, a contradiction since all of $M_1, \ldots, M_7$ are 3-connected.  

We now show that every proper minor of each of $\Mm_1, \ldots, \Mm_7$ is frame.  
The biased graphs shown in Figure \ref{matroidals1} show that each matroidal $(M_i, L_i \setminus e_2)$ is frame ($i \in \{1, \ldots, 7\}$).  
The biased graphs shown in Figure \ref{matroidals1_alt_reps} 
\begin{figure}[tbp] 
\begin{center} 
\includegraphics[scale=0.8]{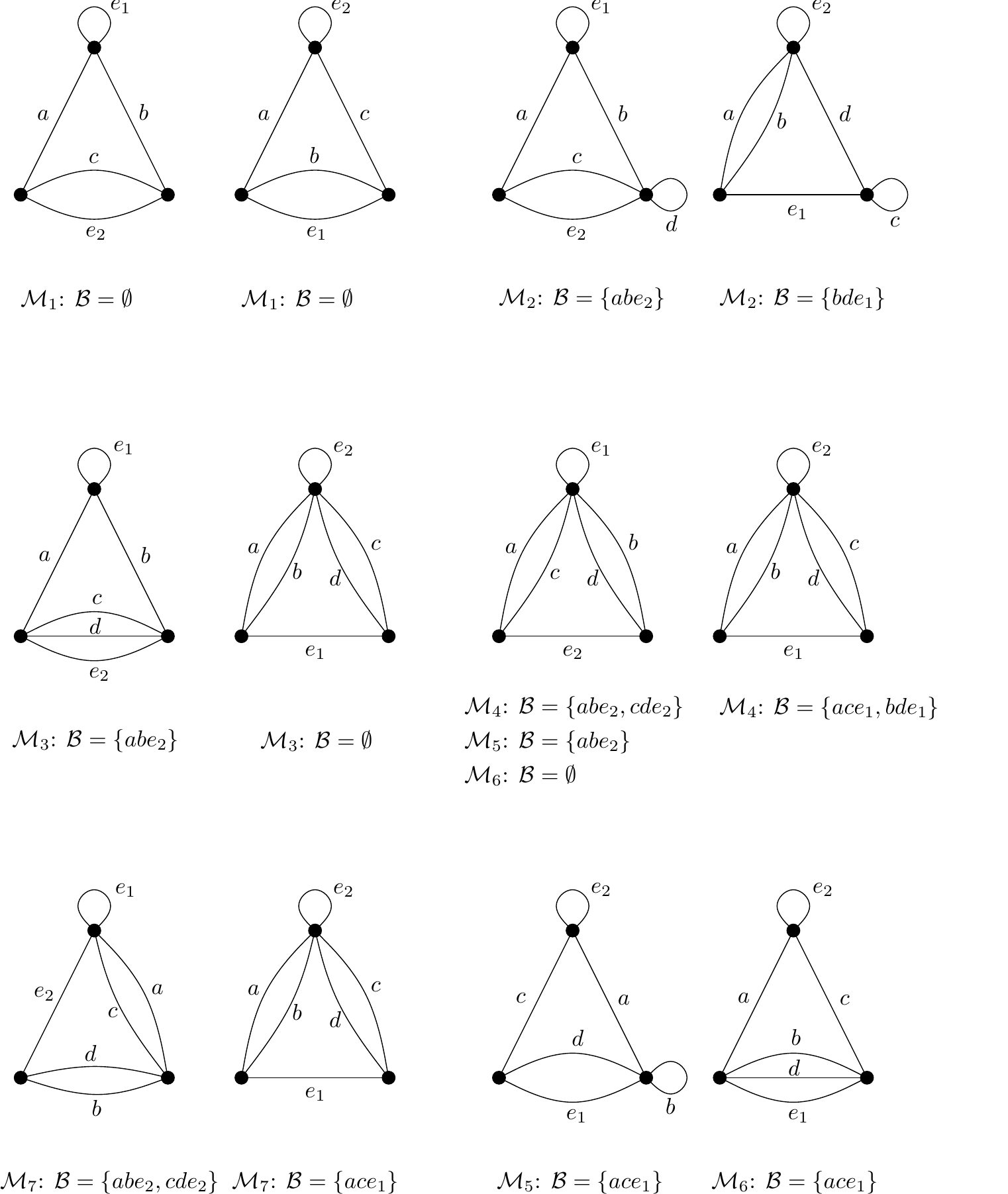}
\end{center} 
\caption{Alternate representations of $\Mm_1, \ldots, \Mm_7$.}
\label{matroidals1_alt_reps}
\end{figure} 
show that also each matroidal $(M_i, L_i \setminus e_1)$ is frame.  
Any matroidal $(N, L)$ obtained from one of $\Mm_1, \ldots, \Mm_7$ by contracting an element other than $e_1$ or $e_2$ has matroid $N$ of rank 2, and so is frame by Lemma \ref{lem:rank2L2matroidalsareframe}.  
Finally, suppose that $(N,L)$ is a matroidal obtained from one of $\Mm_1, \ldots, \Mm_7$ by deleting an element $e$ other than $e_1,e_2$.  
In all cases, the resulting matroid series reduces to a matroid $N'$ of rank 2 with both $e_1, e_2 \in E(N')$ by the contraction of a single edge $s$ (this is easy to see by considering the biased graph representations of Figure \ref{matroidals1_alt_reps}: in each case, one of the biased graphs representing $M_i \setminus e$ obtained by deleting an edge $e \in \{a, b, c, d\}$ has a vertex incident to just two edges).  
By Lemma \ref{lem:rank2L2matroidalsareframe} therefore, there is an $L$-biased representation $\Om'$ of the series reduced matroid $N'$.  
Now let $\Om$ be a biased graph obtained from $\Om'$ by placing an edge representing $s$ in series with the other edge $t$ in its series class in $M_i \setminus e$ | that is, if $t$ is a link, subdivide $t$ to produce a path consisting of edges $s$ and $t$, and if $t$ is a loop, say incident to $v$, add a vertex $w$, add $s$ as a $v$-$w$ link, and place $t$ as a loop incident to $w$.  
Evidently this corresponds to a coextention of $N'$ to recover $N$, and $\Om$ is an $L$-biased representation of $N$.  
\end{proof}

\begin{prop} \label{lem:Ex_min_MK5_minus_matching_L_size_2}
The matroidal $\Mm_8$ is an excluded minor for the class of frame matroidals.
\end{prop}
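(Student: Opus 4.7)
The plan is to follow the template of Proposition \ref{prop:excluded_matroidals}, with the caveat that $M_8$ is of higher rank than $M_1, \ldots, M_7$, so the non-representability part demands a more substantial structural argument. There are two parts: show $(M_8, L_8)$ is not frame, and show every proper minor of $\Mm_8$ is frame.

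For the non-representability part, suppose for contradiction that $\Om = (G, \Bb)$ is an $L_8$-biased representation of $M_8$. Write $L_8 = \{e_1, e_2\}$, and let $v_j$ be the vertex of the unbalanced loop $e_j$. Since $\{e_1, e_2\}$ is not a circuit of $M_8$, we have $v_1 \neq v_2$, and since $M_8$ is connected and $\Om$ contains unbalanced loops, the rank formula $r(M_8) = |V(G)| - b(E(G))$ forces $|V(G)| = r(M_8)$. The circuits of $M_8$ avoiding $L_8$ must appear in $\Om \setminus \{e_1,e_2\}$ as one of the four biased-graph circuit types (balanced cycle, tight handcuff, loose handcuff, contrabalanced theta), while each circuit containing some $e_j$ must appear in $\Om$ as a loose or tight handcuff through $v_j$, or as a contrabalanced theta using both unbalanced loops. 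These conditions, combined with the 3-connectedness of $M_8$, pin down the underlying graph $G$ to a short list of possibilities. The twisted flip of Theorem \ref{prop:flip_operation} and the signature relabelling of Observation \ref{obs:relabelling_when_there_is_a_balancing_vertex} allow us to further collapse this list to a small set of inequivalent candidates. The main obstacle is the ensuing case analysis: for each surviving candidate $(G, \Bb)$ we must exhibit either a circuit of $M_8$ whose edge set fails to form a biased circuit of $\Om$, or a non-circuit of $M_8$ whose edge set does form one, delivering the desired contradiction.

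For the proper minors, we exhibit explicit $L$-biased representations. Removing $e_j$ from $L_8$ for $j \in \{1,2\}$ is handled by displaying a biased graph representation of $M_8$ in which the element $e_j$ is recast as a link rather than an unbalanced loop, analogous to the alternate representations used in Figure \ref{matroidals1_alt_reps}. Contractions or deletions of elements $e \notin L_8$ are handled by a combination of techniques: when the rank drops to 2, Lemma \ref{lem:rank2L2matroidalsareframe} applies directly; when the minor coincides with (or contains as a restriction) a known frame matroidal, we invoke that; and otherwise, as in the proof of Proposition \ref{prop:excluded_matroidals}, we series-reduce to a rank-2 matroidal, apply Lemma \ref{lem:rank2L2matroidalsareframe}, and then uncontract the series element to recover an $L$-biased representation of the original minor. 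The verification that these constructions are exhaustive is routine but tedious, given the number of elements of $M_8$.
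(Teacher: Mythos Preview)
Your proposal is an outline rather than a proof, and the outline misses the concrete ideas that actually make the argument work.

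For non-representability, you promise a case analysis on candidate underlying graphs, trimmed via twisted flips and signature relabellings, but you never execute it or identify the obstruction. The paper's argument is short and specific: it exploits the fact that $M_8 = M(W_4)$ has a \emph{unique} 4-circuit containing $\{e_1,e_2\}$. In any $\{e_1,e_2\}$-biased graph $\Omega$ on four vertices, that circuit forces the two non-loop edges of this circuit to form a length-2 path between the loop-vertices $u_1,u_2$ through some vertex $v$; the absence of any 3-circuit or second 4-circuit through $\{e_1,e_2\}$ then forces all four remaining edges to be incident with the fourth vertex $v'$, and simplicity/cosimplicity of $M(W_4)$ forces $u_1v'$ and $u_2v'$ edges---which manufacture a second 4-circuit through $\{e_1,e_2\}$, a contradiction. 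Your plan contains no hint of this counting argument, and ``the ensuing case analysis'' is not a proof.

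For the proper minors, your plan leans on Lemma~\ref{lem:rank2L2matroidalsareframe} and series reduction to rank~2, mirroring Proposition~\ref{prop:excluded_matroidals}. But $M_8$ has rank~4, so a single contraction lands you in rank~3, not rank~2, and a single deletion keeps rank~4; series reduction after a deletion typically only reaches rank~3 as well. The paper does not use the rank-2 lemma here at all. Instead it (i) uses symmetry to reduce to two elements $d,f$, and (ii) for each of the four minors $M(W_4)\setminus d$, $M(W_4)/d$, $M(W_4)\setminus f$, $M(W_4)/f$ produces an explicit $\{e_1,e_2\}$-biased graph by swapping elements within series or parallel classes until $e_1,e_2$ share a vertex, then rolling up that vertex's incident edges. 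Your outline does not supply these constructions, and the mechanism you do name (reduce to rank~2) is not available.
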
 

\begin{proof} 
The matroid of $\Mm_8$ is the rank 4 wheel, \ie\ the cycle matroid $M(W_4)$ where $W_4$ is the five vertex simple graph having one vertex of degree 4 and four vertices of degree 3 (Figure \ref{fig:W4}).  
\begin{figure}[tbp] 
\begin{center} 
\includegraphics[scale=0.8]{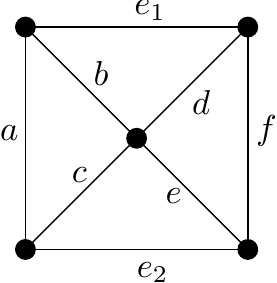}
\end{center} 
\caption{$W_4$.}
\label{fig:W4} 
\end{figure} 
Its distinguished subset $L = \{e_1, e_2\}$ consists of two nonadjacent edges both of which have both ends of degree three.  
(Pinch the two ends of $e_1$ to obtain the biased graph representation shown in Figure \ref{matroidals1}.)
We first show that $\Mm_8$ is not frame.  
Suppose for a contradiction that $M(W_4) \iso F(\Om)$ for some $L$-biased graph $\Om$.  
Then $e_1$ and $e_2$ are both unbalanced loops in $\Om$; say $e_i$ is incident to vertex $u_i$ ($i \in \{1, 2\}$).  
There is a unique circuit $C$ of size 4 in $M(W_4)$ containing $\{e_1, e_2\}$; say $C=e_1 e_2 f f'$.  
Elements $f, f'$ must form a path of length 2 in $\Om$ linking $u_1$ and $u_2$, say with interior vertex $v$.  
Since $\Om$ is not balanced, $|V(\Om)|=4$; let $v'$ be the fourth vertex of $\Om$.  
Note that since $e_1, e_2$ are not in a circuit of size 3 and not in any other circuit of size 4, all four remaining edges (other than $e_1, e_2, f, f'$) must be incident to $v'$.  
Since $M(W_4)$ has no elements in series or in parallel, there must be an edge with ends $u_1,v'$ and another edge with ends $u_2,v'$.  
This yields another 4-circuit in $F(\Om)$ containing $e_1$ and $e_2$, a contradiction.

We now show that every proper minor of $\Mm_8$ is frame.  
For $i \in \{1,2\}$, an $(L \setminus e_i)$-biased graph is obtained by pinching the ends of $e_{3-i}$ in the graph $W_4$, so the matroidal $(M(W_4), L \setminus e_i)$ is frame.  
Now consider a matroidal obtained from $\Mm_8$ by deleting or contracting an element not in $L$.  
Up to symmetry there are only two such edges to consider, say elements $d$ and $f$ as shown in Figure \ref{fig:W4}.  
The biased graphs of Figure \ref{fig:Mm8isexminor} show that deleting or contracting either of $d$ or $f$ yields a frame matroidal.  
These $L$-biased graphs may be obtained as follows.  
\begin{figure}[tbp] 
\begin{center} 
\includegraphics[scale=0.8]{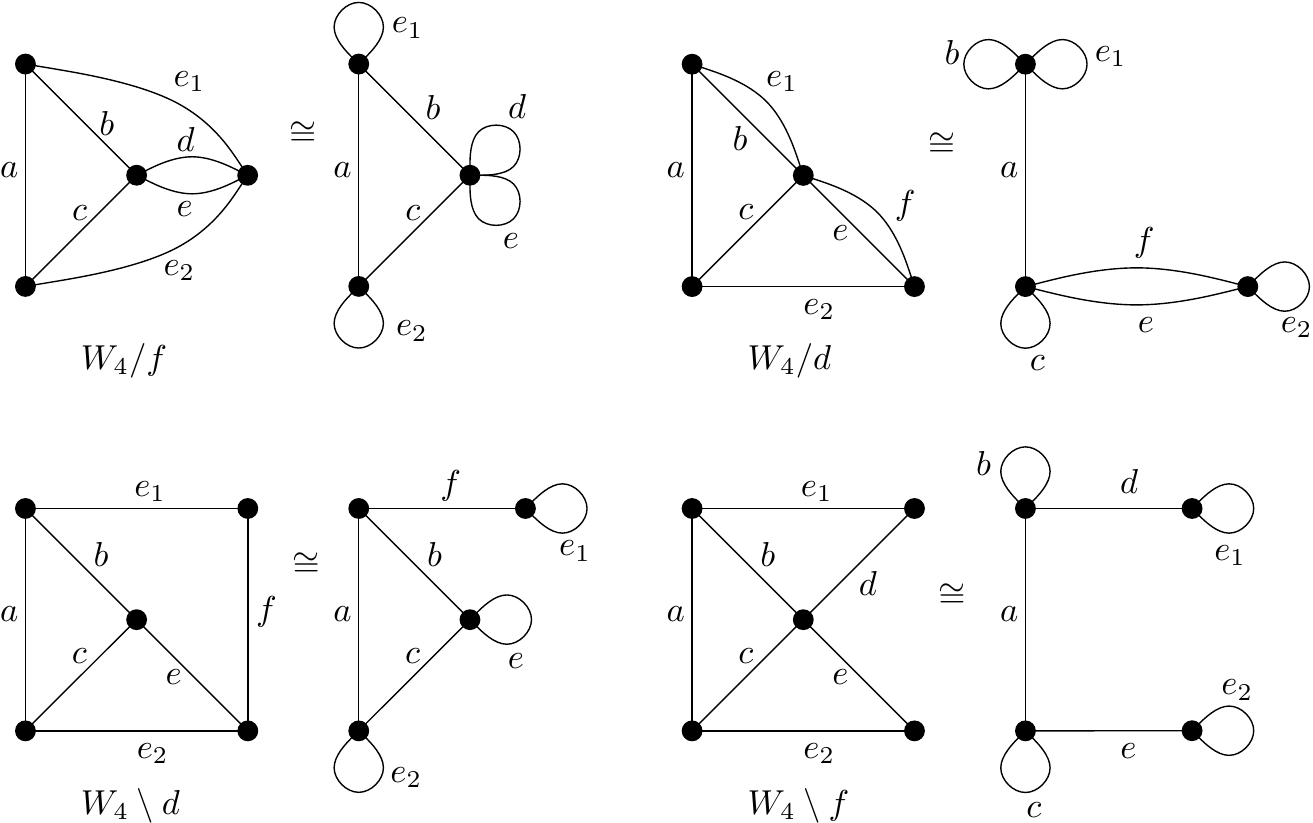}
\end{center} 
\caption{Any proper minor of $W_4$ is $\{e_1,e_2\}$-biased.}
\label{fig:Mm8isexminor} 
\end{figure} 
\begin{itemize} 
\item  Contracting $f$ in $W_4$ yields a graph in which $e_1$ and $e_2$ are incident to a common vertex.  Rolling up the edges incident to that vertex yields an $\{e_1, e_2\}$-biased graph, so $(M(W_4)/f, \{e_1, e_2\})$ is frame.  
\item  In $M(W_4)/d$ elements $\{e,f\}$ are parallel.  In $M(W_4)/d \setminus f$, elements $e$ and $e_2$ are in series, so $M(W_4)/d$ is represented by the graph obtained from $W_4 /d$ by replacing $e_2$ with the pair of parallel edges $e, f$ and replacing the pair $e, f$ with $e_2$.  
This yields a graph in which $e_1$ and $e_2$ are incident to a common vertex $v$.  Now rolling up the edges in $\delta(v)$ yields an $\{e_1, e_2\}$-biased graph representing $M(W_4)/d$, so $(M(W_5)/d, \{e_1, e_2\})$ is frame.  
\item  In $M(W_4) \setminus d$ elements $e_1$ and $f$ are in series, so the biased graph $\Om$ obtained from $W_4 \setminus d$ by swapping edges $e_1$ and $f$ represents $M(W_4) \setminus d$.  
Since $e_1$ and $e_2$ are incident to a common vertex $v$ in $\Om$, the biased graph obtained by rolling up the edges in $\delta(v)$ is an $\{e_1,e_2\}$-biased graph representing $M(W_4) \setminus d$.  
\item  Similarly, $M(W_4) \setminus f$ has series classes $\{e_1,d\}$ and $\{e_2,e\}$.  Hence swapping edges $e_1$ and $d$, and swapping edges $e_2$ and $e$, we obtain a biased graph representing $M(W_4) \setminus f$ in which $e_1$ and $e_2$ are incident to a common vertex.  Rolling up the edges $e_1, e_2, b, c$ incident to that vertex yields an $\{e_1, e_2\}$-biased graph representing $M(W_4) \setminus f$.  
\qedhere
\end{itemize} 
\end{proof}

\subsection{Finding matroidal minors using configurations} 

To prove Lemma \ref{lem:workhorse}, we suppose $\Nn{=}(N,L)$ is an excluded minor for the class of frame matroidals with $|L|=2$ that is not one of $\Mm_1, \ldots, \Mm_8$.  
We then work with a biased graph $\Psi$ representing $N$ to derive the contradiction that $(N,L)$ contains one of $\Mm_0, \ldots, \Mm_8$ as a minor.  
When doing so, we are looking for biased graphs representing one of $\Mm_0, \ldots, \Mm_8$.  
Some of $\Mm_0, \ldots, \Mm_8$ share the same underlying graphs or have an underlying graph contained in the underlying graph of another (Figures \ref{matroidals1} and \ref{matroidals1_alt_reps}).  
Since which of $\Mm_0, \ldots, \Mm_8$ we find as a minor of $\Nn$ is irrelevant, it is enough to determine the underlying graph of a minor of $\Psi$ along with just enough information about the biases of its cycles to see that $\Psi$ must contain one of $\Mm_0, \ldots, \Mm_8$ as a minor.  
We formalize this as follows.  

A \emph{configuration} $\Cc$ consists of a graph $G$ with two distinguished edges $e_1, e_2$, together with a set $\Uu$ of cycles of $G$, which we call \emph{unbalanced}.  
The configurations we find are those named $\Cc_1$, \ldots, $\Cc_4$, $\Cc_4'$, $\Cc_4''$, $\Cc_5$, \ldots, $\Cc_8$ in Figure \ref{fig:configs_c}, and $\Dd_1$, $\Dd_2$, $\Dd_2'$, and $\Dd_3$ in Figure \ref{fig:configs_d}.  
\begin{figure}[tbp] 
\begin{center} 
\includegraphics[scale=0.8]{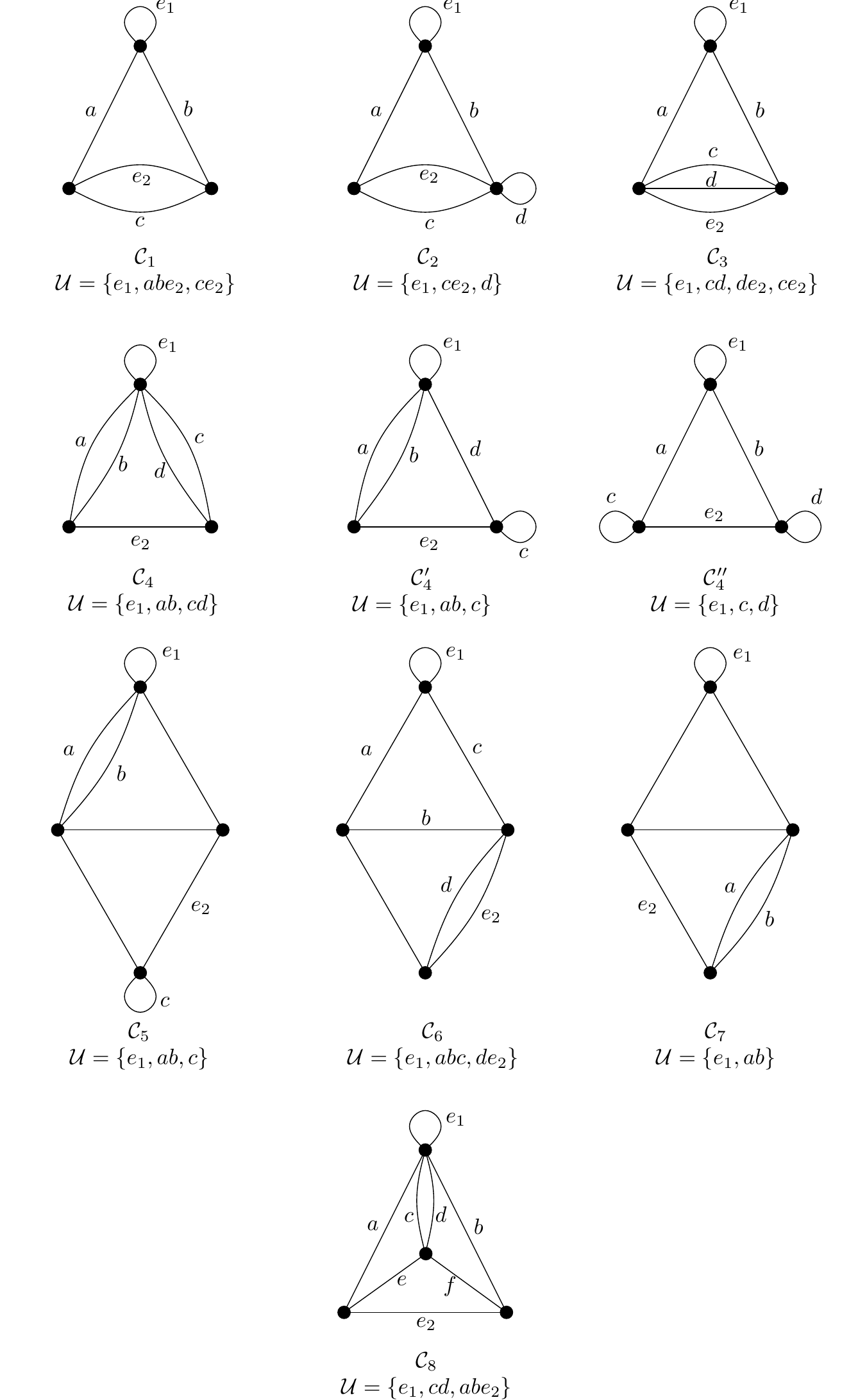}
\end{center} 
\caption{Configurations used to find $\Mm_0, \ldots, \Mm_8$.}
\label{fig:configs_c}
\end{figure} 
We say that a biased graph $\Omega {=} (G, \Bb)$ \emph{realises} configuration $\Cc{=}(G,\Uu)$ if $\Bb \cap \Uu = \emptyset$.  
The following two lemmas guarantee that finding one of these configurations in $\Psi$ implies that $\Nn$ contains one of $\Mm_0, \ldots, \Mm_8$ as a minor.  

\begin{lem} \label{lem:cconfig}
Let $\Omega$ be a biased graph that realises one of the configurations $\Cc_1,\ldots,\Cc_4$, $\Cc_4'$, $\Cc_4''$, $\Cc_5,\ldots,\Cc_8$.  
Then $(F(\Omega), \{e_1,e_2\})$ contains one of $\Mm_0, \ldots, \Mm_8$ as a minor.
\end{lem}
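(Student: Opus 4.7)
The plan is a case analysis on which configuration $\Omega$ realises. For each of $\Cc_1, \ldots, \Cc_4, \Cc_4', \Cc_4'', \Cc_5, \ldots, \Cc_8$ I would exhibit an explicit sequence of biased-graph minor operations — deletions and contractions of edges outside $\{e_1, e_2\}$ — that reduces $\Omega$ to a biased graph on three or four vertices whose frame matroid, paired with $\{e_1, e_2\}$, is isomorphic to one of $\Mm_0, \ldots, \Mm_8$. For each configuration I would first delete every edge not lying in any cycle of $\Uu$ nor in $\{e_1,e_2\}$, then contract a well-chosen forest to collapse each $\Uu$-cycle down to a short cycle or an unbalanced loop. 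Comparing the resulting underlying graph to those in Figures \ref{matroidals1} and \ref{matroidals1_alt_reps} identifies a candidate target $\Mm_j$.

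The key observation is that a cycle in $\Uu$ is unbalanced in $\Omega$ by hypothesis and remains unbalanced in every minor of $\Omega$ in which all of its edges survive, simply by the definition of the minor operations in Section \ref{sec:standardnotionsbiasedgraphsminorsconnectivity}. Thus each cycle inherited from $\Uu$ stays unbalanced in the reduced biased graph. Together with the theta property, this forces the circuit structure of the reduction to match that of some $M_j$; when some cycle of the reduction is not required to be unbalanced by $\Uu$, I split into two sub-cases on its bias, one yielding one target $\Mm_j$ and the other yielding a different $\Mm_{j'}$ (which is why the configurations $\Cc_1, \ldots, \Cc_8$ produce the full list $\Mm_0, \ldots, \Mm_8$ rather than a single matroidal each). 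In the few cases where the natural contraction yields a biased graph whose underlying graph or signature differs from the standard drawing of $\Mm_j$ by a pinch, a roll-up, or a twisted flip, I appeal to Proposition \ref{prop:vertex_splitting_operation} and Theorem \ref{prop:flip_operation} to transfer the frame matroid to the standard representative while preserving the roles of $e_1$ and $e_2$.

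The main obstacle is the bookkeeping: each configuration requires its own explicit choice of which edges to delete and which to contract, followed by a direct verification on circuits that the resulting biased graph represents $\Mm_j$ with $L_j$ mapped to $\{e_1,e_2\}$. The configurations were designed so that the cycles of $\Uu$ force exactly enough unbalanced cycles in the reduction to pin down such a $j$, but confirming this for $\Cc_4, \Cc_4', \Cc_4''$, and especially $\Cc_8$ is the delicate part — there the reduced graph is close in size to that of $\Mm_8$, and the correct target depends on the bias of the handful of cycles not constrained by $\Uu$, so two or three sub-cases must be checked per configuration.
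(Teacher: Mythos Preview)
Your proposal is essentially the paper's approach: a configuration-by-configuration case analysis in which the small configurations $\Cc_1$--$\Cc_4$ are matched directly to some $\Mm_i$ by enumerating the possible biases of the cycles not forced by $\Uu$, and the remaining configurations are reduced by deletion/contraction (and, for $\Cc_4', \Cc_4''$, by unrolling loops) to configurations already handled. The paper streamlines the bookkeeping by reducing each of $\Cc_4', \Cc_4'', \Cc_5, \ldots, \Cc_8$ to one of the earlier $\Cc_j$ rather than directly to an $\Mm_i$, but the strategy is the same.
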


\begin{proof}
We show that in each case, $\Om$ has a minor containing $\{e_1, e_2\}$ isomorphic to one of the biased graphs $\Om_i$ representing the matroid $M_i$ of a matroidal $\Mm_i$ ($i \in \{0, \ldots, 8\}$).  
This implies that $F(\Om)$ has $M_i$ as a minor containing $\{e_1, e_2\}$, and so that $(F(\Om), \{e_1,e_2\})$ contains $\Mm_i$ as a minor.  
Recall that the biased graphs $\Om_i$ defining $M_i$ ($i \in \{0, \ldots, 8\}$) are those shown in Figure \ref{matroidals1}.  

The only two realisations of $\Cc_1$ are the biased graphs $\Om_0$ and $\Om_1$ representing the matroids $M_0$ of $\Mm_0$ and $M_1$ of $\Mm_1$.  
A biased graph realising $\Cc_2$  (resp.\ $\Cc_3$) will have a subgraph realising $\Cc_1$ unless it is isomorphic to $\Om_2$ (resp.\ $\Om_3$).  
A biased graph realising $\Cc_4$ has either 0, 1, or 2 balanced cycles, and so is isomorphic to one of $\Om_4$, $\Om_5$, or $\Om_6$, respectively.  
If $\Omega$ is a biased graph realising $\Cc_4'$ or $\Cc_4''$ then $\Om$ has a unique balancing vertex after deleting its unbalanced loops; unrolling its unbalanced loops we obtain a biased graph $\Phi$ realising $\Cc_4$ with $F(\Phi) \iso F(\Om)$.  

Suppose $\Omega$ realises $\Cc_5$.  
Let $a, b$ be the two parallel edges forming the unbalanced cycle.  
We may assume by possibly interchanging $a$ and $b$ that the unique triangle containing $a$ is unbalanced.  
Contracting $a$ and deleting $b$ yields a $\Cc_4'$ configuration.  

Suppose $\Omega$ realises $\Cc_6$.  
Then by the theta property there is an unbalanced cycle either of length 3 or length 4 containing $e_2$.  
In either case, this unbalanced cycle together with unbalanced cycle $d e_2$ has a minor that is a $\Cc_1$ configuration.  

If $\Omega$ realises $\Cc_7$, then | since by the theta property one of $a$ or $b$ is in an unbalanced triangle | contracting one of edges $a$ or $b$ we obtain a $\Cc_2$ configuration.

Finally suppose that $\Omega$ realises $\Cc_8$.  
If the triangle $e f e_2$ is unbalanced, then deleting $c, d$ and contracting one of the edges now in series yields configuration $\Cc_1$.  
So suppose triangle $efe_2$ is balanced.  
If one of $c$ or $d$ | say $d$ | fails to be contained in a balanced triangle, then deleting $c$ and contracting $d$ yields configuration $\Cc_4$.  
The remaining possibility is that $e f e_2$ is balanced and both $c$ and $d$ are contained in a balanced triangle.  
Then $\Omega$ may be embedded in the plane as drawn in Figure \ref{fig:configs_c} with precisely facial cycles $e f e_2$, $ace$, and $bdf$ balanced.  
The theta property implies that every cycle of length $>1$ in this graph is unbalanced if in the embedding its interior contains the face bounded by unbalanced cycle $cd$, and is otherwise balanced.  
Hence $\Om \iso \Om_8$.  
\end{proof}

\begin{figure}[tbp] 
\begin{center} 
\includegraphics[scale=0.8]{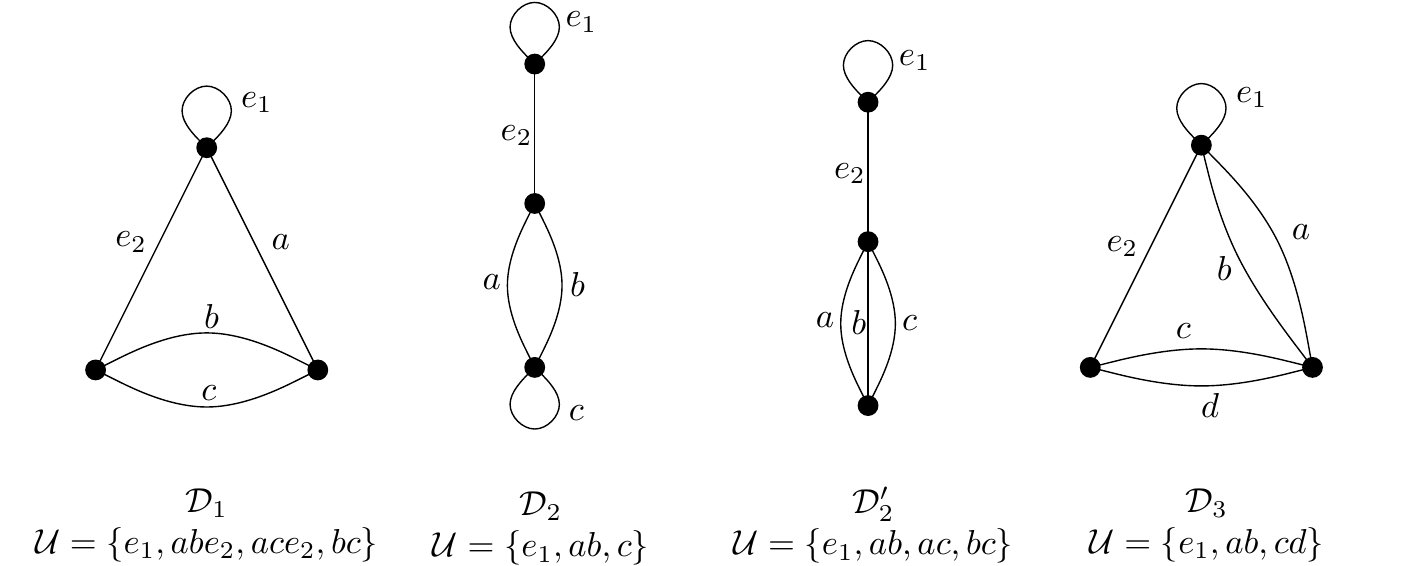}
\end{center} 
\caption{More configurations.}
\label{fig:configs_d}
\end{figure} 

\begin{lem} \label{lem:dconfig}
Let $\Omega$ be a biased graph which realises one of the configurations $\Dd_1$, $\Dd_2$, $\Dd_2'$, or $\Dd_3$.
Then $(F(\Omega), \{e_1,e_2\})$ contains one of $\Mm_0$, $\Mm_1$, or $\Mm_7$ as a minor.
\end{lem}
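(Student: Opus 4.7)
The strategy mirrors that of Lemma~\ref{lem:cconfig}: for each configuration $\Dd_i$, I will exhibit an explicit sequence of contractions and deletions of edges (none equal to $e_1$ or $e_2$) which reduces the biased graph $\Omega$ to one that either realises the configuration $\Cc_1$ or is isomorphic to one of the biased graph representations of $M_0$, $M_1$, or $M_7$. Since matroidal minors are taken by deleting/contracting elements not in $L=\{e_1,e_2\}$, each such reduction shows that $(F(\Omega),\{e_1,e_2\})$ has the corresponding $\Mm_j$ as a minor.

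In more detail, my plan for each configuration is as follows. First I will identify, using the theta property together with the unbalanced cycles prescribed by $\Dd_i$, the bias of enough additional cycles to locate a small ``core'' subgraph containing $e_1$ and $e_2$ on which the contractions will act. When a cycle $C$ of $\Om$ shares two internally disjoint paths with a prescribed unbalanced cycle in $\Dd_i$, the theta property forces $C$ to be unbalanced, and rerouting along (not-yet-determined) balanced cycles (Lemma~\ref{lem:rerouting_along_a_bal_cycle}) propagates this information. Next I will contract along paths that connect the endpoints of $e_1$ and $e_2$ to a common fragment, being careful that each contracted edge lies on no prescribed unbalanced cycle other than those already accounted for, so that no unbalanced cycle becomes balanced in the minor. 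Finally I will delete any leftover edges not needed to witness the required unbalanced cycles. In each of $\Dd_1$, $\Dd_2$, and $\Dd_2'$ this should reduce directly to a $\Cc_1$ realisation, at which point Lemma~\ref{lem:cconfig} yields $\Mm_0$ or $\Mm_1$. For $\Dd_3$ the analogous reduction produces either $\Cc_1$ or a biased graph isomorphic to $\Om_7$ (a theta with two vertex-disjoint pendant unbalanced loops, or its equivalent under rolling up), yielding $\Mm_7$.

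The main technical obstacle is verifying, in the $\Dd_3$ case, that the reduction really lands in $\{\Mm_0,\Mm_1,\Mm_7\}$ rather than in a larger $\Mm_j$: one must rule out the possibility that certain cycles which would appear in a $\Cc_4$, $\Cc_4'$ or $\Cc_8$ configuration can actually be balanced here. This requires a careful bias analysis, using the specified unbalanced cycles of $\Dd_3$ together with repeated applications of the theta property and rerouting along balanced cycles, to pin down the bias pattern precisely. The configurations $\Dd_2$ and $\Dd_2'$ are expected to differ only in the relative position of $e_1,e_2$ within an otherwise identical underlying structure, so they should be handled in parallel by symmetric contraction sequences.

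Once each of the four cases has been reduced as above, the conclusion is immediate: the obtained minor realises $\Cc_1$ (giving $\Mm_0$ or $\Mm_1$ by Lemma~\ref{lem:cconfig}) or is a representation of $M_7$ with $\{e_1,e_2\}$ as its distinguished unbalanced loops (giving $\Mm_7$). Thus $(F(\Omega),\{e_1,e_2\})$ contains one of $\Mm_0$, $\Mm_1$, or $\Mm_7$ as a matroidal minor, completing the proof.
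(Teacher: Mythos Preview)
Your overall framework is sound, but you are vastly overcomplicating the argument and, in doing so, have missed the actual content of the lemma. The paper's proof occupies four sentences:
\begin{itemize}
\item Every biased graph realising $\Dd_1$ already satisfies $F(\Omega)\iso M_1$, so $(F(\Omega),\{e_1,e_2\})\iso\Mm_1$.
\item Every biased graph realising $\Dd_2$ or $\Dd_2'$ already satisfies $F(\Omega)\iso M_0$, so $(F(\Omega),\{e_1,e_2\})\iso\Mm_0$.
\item For $\Dd_3$: either $\Omega$ contains a $\Dd_1$ configuration (done by the first bullet), or else $F(\Omega)\iso M_7$, giving $\Mm_7$.
\end{itemize}
No contractions, no deletions, no rerouting, no ``careful bias analysis'' is required for $\Dd_1,\Dd_2,\Dd_2'$: the configurations are already minimal biased-graph representations of the target matroids. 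They are not larger structures to be minored down to $\Cc_1$; they are simply \emph{different} representations of $M_0$ and $M_1$ (on different underlying graphs than the $\Cc_1$ representations), adapted to the situation in Case~2 of Lemma~\ref{lem:workhorse} where $e_1$ and $e_2$ are adjacent.

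Your plan to ``reduce to a $\Cc_1$ realisation'' is therefore misguided for $\Dd_1,\Dd_2,\Dd_2'$. If these configurations have the same number of edges as $\Cc_1$, no proper minor can land on $\Cc_1$ at all; if they have more, you would be throwing away information and then trying to recover it. Either way you would be working much harder than necessary. Similarly, the ``main technical obstacle'' you anticipate for $\Dd_3$---ruling out $\Cc_4,\Cc_4',\Cc_8$---does not arise: the only case split is whether a $\Dd_1$ sits inside $\Dd_3$, and if not, the theta property forces the unique remaining bias pattern, which is that of $\Om_7$.

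The correct move is to look at each $\Dd_i$ directly, enumerate the (very few) biased graphs realising it, and observe that each such $\Omega$ already represents $M_0$, $M_1$, or $M_7$. That is the entire proof.
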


\begin{proof}
If $\Omega$ realises $\Dd_1$ then $F(\Omega) \iso M_1$, so $(F(\Omega), \{e_1,e_2\})$ is isomorphic to $\Mm_1$.  
If $\Omega$ realises either $\Dd_2$ or $\Dd_2'$ then $F(\Om) \iso M_0$, so $(F(\Omega), \{e_1,e_2\})$ is isomorphic to $\Mm_0$.  
If $\Omega$ realises $\Dd_3$, then either $\Omega$ contains a $\Dd_1$ configuration or $F(\Om) \iso M_7$ so $(F(\Om), \{e_1, e_2\})$ is isomorphic to $\Mm_7$.
\end{proof}

Two of the excluded minors for the class of frame matroidals have graphic matroids, namely $\Mm_4$ and $\Mm_8$: 
$M_4$ is the cycle matroid of $K_4$ and $M_8$ is the rank 4 wheel.  
The following lemma will help us locate either $\Mm_4$ or $\Mm_8$ as a minor in a purported excluded minor $\Nn{=}(N,L)$ in which $N$ is graphic.  

\begin{lem} \label{lem:On_rooted_K4_minors} 
Let $G$ be a simple 3-connected graph, and let $\{e_1, e_2\} \subseteq E(G)$, with $e_1 {=} s_1 t_1$ and $e_2 {=} s_2 t_2$, with $s_1, s_2, t_1, t_2$ pairwise distinct.  
Then either $G$ has a $K_4$ minor containing $\{e_1, e_2\}$ in which $e_1$ and $e_2$ do not share an endpoint, or $G$ has $W_4$ as a minor containing $\{e_1, e_2\}$ in which $e_1$ and $e_2$ are opposite each other in the rim of $W_4$ (\ie, $e_1$ and $e_2$ do not share an endpoint and each of $e_1$ and $e_2$ have both endpoints of degree three).  
\end{lem}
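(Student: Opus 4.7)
The plan is to use Menger's theorem to construct explicit paths realising either the desired rooted $K_4$-minor or the rooted $W_4$-minor. Since $G$ is $3$-connected and $s_1, t_1, s_2, t_2$ are pairwise distinct, the fan lemma furnishes three internally disjoint paths from $s_1$ to $\{t_1, s_2, t_2\}$, one ending at each target; as $e_1 = s_1 t_1$ is an edge, we may take the path ending at $t_1$ to be $e_1$ itself. Call the remaining two paths $A$ (from $s_1$ to $s_2$) and $B$ (from $s_1$ to $t_2$); they are internally disjoint from each other and from $e_1$. Analogously, obtain a fan from $t_1$ consisting of $e_1$, $C$ (from $t_1$ to $s_2$) and $D$ (from $t_1$ to $t_2$). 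Among all such choices, pick $A, B, C, D$ minimising $|V(A) \cup V(B) \cup V(C) \cup V(D)|$.

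If $A, B, C, D$ are pairwise internally disjoint and their interiors avoid $\{s_1, t_1, s_2, t_2\}$, then together with $e_1, e_2$ they form a subdivision of $K_4$ in $G$ with branch vertices $s_1, t_1, s_2, t_2$ and with $e_1, e_2$ as non-adjacent (opposite) edges, giving the first alternative.

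Otherwise, some pair from $\{A,B\} \times \{C,D\}$ shares an internal vertex (within each fan the paths are internally disjoint by construction). The plan is to argue, using minimality together with the $3$-connectivity of $G$, that all such intersections must cluster at a single vertex $c$, which then plays the role of the hub of a $W_4$: the four segments of the two intersecting paths (say $A$ and $D$ meet at $c$) from $c$ to $s_1, s_2, t_1, t_2$ serve as four internally disjoint spokes; suitable rim paths $s_1$-$t_2$ and $t_1$-$s_2$ avoiding $c$ can then be extracted from $B, C$ and, if needed, additional applications of Menger's theorem in $G - c$. Together with $e_1, e_2$, these form the desired $W_4$-subdivision with $e_1, e_2$ opposite in the rim.

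The main obstacle is the case analysis of how $A, B, C, D$ may intersect. One must show that minimality of the total vertex count forces all intersections to cluster at a single hub $c$, handling in particular the possibility of distinct $A$-$D$ and $B$-$C$ intersections, and one must verify that $3$-connectivity reliably supplies the required rim paths disjoint from $c$ and from the spokes. When a rim path cannot be found, the argument backtracks: the failure itself enables a rerouting of the fans that eliminates the offending intersection and produces the $K_4$-subdivision after all.
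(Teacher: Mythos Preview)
Your approach is genuinely different from the paper's: you attempt a direct construction via Menger/fan arguments, whereas the paper takes a minimal counterexample $G$, uses the Tutte-type fact that for every edge $e$ of a simple $3$-connected graph one of $\co(G\setminus e)$ or $G/e$ is $3$-connected, and deduces that every edge of $G$ touches $\{s_1,t_1,s_2,t_2\}$, forcing $|V(G)|\le 5$ and reducing to a tiny finite check. That argument is short and complete.

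Your outline, by contrast, has a real gap at exactly the point you flag as ``the main obstacle.'' You assert that minimising $|V(A)\cup V(B)\cup V(C)\cup V(D)|$ forces all cross-intersections between the $\{A,B\}$-fan and the $\{C,D\}$-fan to occur at a single vertex $c$, but you give no argument for this, and it is not clear why minimality of the total vertex count should prevent, say, $A\cap D$ and $B\cap C$ from being nonempty at distinct vertices. Even in the simpler situation where only $A$ and $D$ meet, taking $c$ to be their first common vertex along $A$ does not guarantee that the four segments $c$--$s_1$, $c$--$s_2$, $c$--$t_1$, $c$--$t_2$ are pairwise internally disjoint: the $c$--$s_2$ tail of $A$ may re-enter $D$. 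You would need a genuine rerouting/minimality lemma here, and you have not supplied one.

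The second gap is the extraction of rim paths $s_1$--$t_2$ and $t_1$--$s_2$ in $G-c$ that avoid the interiors of the four spokes. Appealing to ``additional applications of Menger's theorem in $G-c$'' is not enough: $G-c$ is only $2$-connected, and nothing prevents the required paths from being forced through spoke interiors. Your proposed fallback (``the failure itself enables a rerouting \ldots producing the $K_4$-subdivision after all'') is a promissory note, not an argument. Until both of these steps are carried out, the proof is a plausible plan rather than a proof; the paper's minimal-counterexample reduction sidesteps all of this case analysis.
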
 

\begin{proof} 
Let $\co(H)$ denote the graph obtained from a graph $H$ by suppressing vertices of degree 2.  
It is well known that if $G$ is a 3-connected graph, then for every $e \in E(G)$, either $\co(G \setminus e)$ or $G/e$ is 3-connected (for instance, it is a special case of Proposition 8.4.6 in \cite{oxley:mt}).  
In the following, if in $G \setminus e$ edge $e_i$, $i \in \{1,2\}$, has an endpoint of degree two, then $\co(G \setminus e)$ is obtained by contracting the edge other than $e_i$ incident to that vertex.  

Let $G$ be a minimal counter-example to the statement of the lemma.  
If there is an edge $e \in E(G)$ such that $\co(G \setminus e)$ or $G /e$ is 3-connected such that $e_1$ and $e_2$ are not incident to a common vertex, then by minimality this graph has a minor of one of the required forms.  
But then so would $G$ have had that minor, a contradiction.  
Hence for every edge $e \notin \{e_1, e_2\}$, if $\co(G \setminus e)$ is 3-connected then $e_1$ and $e_2$ are adjacent in $\co(G \setminus e)$, and if $G/e$ is 3-connected then $e_1$ and $e_2$ are adjacent in $G/e$.  

Suppose there is an edge $e \in E(G)$ that does not have any of $s_1, t_1, s_2, t_2$ as an endpoint.  
Then $\co(G \setminus e)$ has $e_1$ and $e_2$ nonadjacent, and so is not 3-connected.  
Hence $G/e$ is 3-connected.  
But neither are $e_1$ and $e_2$ adjacent in $G/e$, contradicting the previous paragraph.  
Therefore every edge of $G$ has an endpoint incident to $e_1$ or $e_2$.  
Now suppose $e \in E(G)$ does not have both endpoints in $\{s_1, t_1, s_2, t_2\}$; say $e = x s_1$ with $x \notin \{s_1, t_1, s_2, t_2\}$.  
Then $G/e$ does not have $e_1$ and $e_2$ adjacent, and so is not 3-connected.  
Hence $\co(G \setminus e)$ is 3-connected, and so has $e_1$ and $e_2$ adjacent.  
This implies that the degree of $s_1$ is three, and the three edges incident to $s_1$ are $e$, $e_1$, and $f$, where the other endpoint of $f$ is one of $s_2$ or $t_2$.  
It follows that $|V(G)| \leq 5$.  (Every vertex $x \notin \{s_1, t_1, s_2, t_2\}$ has neighbourhood of size $\geq 3$ contained in $\{s_1, t_1, s_2, t_2\}$.  Further, each vertex in the neighbourhood of $x$ has degree three, which, together with its edge to $x$ and its incident edge in $\{e_1, e_2\}$, includes an edge whose other endpoint is also in $\{s_1, t_1, s_2, t_2\}$.  These edges resulting from the existence of $x \notin \{s_1, t_1, s_2, t_2\}$ accounted for thus far leave just one vertex $u$ in $\{s_1, t_1, s_2, t_2\}$ for which it is possible that $u$ has an additional incident edge, yet the existence of a vertex $y \notin \{x, s_1, t_1, s_2, t_2\}$ requires three such vertices.)  

If $|V(G)|=4$, then $G \iso K_4$ and we are done.  
So suppose $|V(G)|=5$; let $V(G) = \{x, s_1, t_1, s_2, t_2\}$.  
The fact that the degree of every vertex is at least three, together with the above constraints on edges incident to a neighbour of $x$ forces the existence of either a $K_4$ or $W_4$ minor of the required form.  
This contradiction completes the proof.  
\end{proof}

\subsection{Proof of Lemma \ref{lem:workhorse}}

If a biased graph $\Om$ has a minor realising a configuration, we say $\Om$ \emph{contains} the configuration.  
Let us call the configurations $\Cc_1$, \ldots, $\Cc_4$, $\Cc_4'$, $\Cc_4''$, $\Cc_5$, \ldots, $\Cc_8$, $\Dd_1$, $\Dd_2$, $\Dd_2'$, $\Dd_3$ \emph{bad} configurations.  
Thus by Lemmas \ref{lem:cconfig} and \ref{lem:dconfig}, if $\Om$ represents $M$, and $\Om$ contains a bad configuration, then the matroidal $(M,\{e_1,e_2\})$ has one of $\Mm_0, \ldots, \Mm_8$ as a minor.  

\begin{proof}[Proof of Lemma \ref{lem:workhorse}]
Let $\Nn{=}(N,L)$ be an excluded minor for the class of frame matroidals with $N$ 3-connected and $L {=} \{e_1,e_2\}$, and suppose $\Nn$ is not isomorphic to one of $\Mm_1, \ldots, \Mm_8$.  
Observe that $\Nn$ cannot have $\Mm_0$ as a minor, since then minimality would imply that $\Nn \iso \Mm_0$; since $e_1$ and $e_2$ are in series in $M_0$ this would contradict the fact that $N$ is 3-connected.  
In light of this and Lemmas \ref{lem:cconfig} and \ref{lem:dconfig} it suffices to derive the contradiction that a biased graph $\Om$ representing $N$ contains a bad configuration.  

First suppose that $N$ is graphic.  
Let $H$ be a graph with $N = M(H)$.  As $N$ is 3-connected, $H$ is simple and 3-connected.  
Hence neither $e_1$ nor $e_2$ is a loop in $H$.  
If edges $e_1$ and $e_2$ share an endpoint $v \in V(H)$, then rolling up the edges incident to $v$ yields a biased graph in which both $e_1$ and $e_2$ are unbalanced loops, a contradiction.  
Hence $e_1$ and $e_2$ do not share an endpoint.  
By Lemma \ref{lem:On_rooted_K4_minors} therefore, $H$ has a minor $H'$ isomorphic to either $K_4$ with $e_1$ and $e_2$ nonadjacent, or isomorphic to $W_4$ with $e_1$ and $e_2$ nonadjacent and neither incident to the vertex of degree 4.  
In the former case $\Nn$ contains $\Mm_4$ as a minor, and in the latter $\Mm_8$ as a minor, both contradictions.  

So $N$ is not graphic.  
Let $\Om{=}\GB$ be a biased graph representing $(N,\{e_1\})$.  
Since $N$ is 3-connected: 
\begin{itemize}
\item[(C1)]  $\Omega$ is 2-connected, and 
\item[(C2)]  if $(A,B)$ is a separation of $N$ with $|A| \ge 2$ and $\Omega[A]$ is balanced, then $|V(A) \cap V(B)| \geq 3$. 
\end{itemize}

Let $v$ be the vertex to which $e_1$ is incident.  
We consider two cases, depending on whether $e_1$ and $e_2$ are adjacent in $\Omega$.

\subsubsection{Case 1. $e_1$ and $e_2$ are not adjacent} 

Let $u, w$ be the endpoints of $e_2$.  
We consider three subcases depending on the behaviour of unbalanced cycles in $\Omega - v$.

\paragraph{Subcase (i) \ $\Omega - v$ has no unbalanced cycle of length $> 1$}  \mbox{} 

If $\Om-v$ contains unbalanced loops, then unrolling them yields an $\{e_1\}$-biased graph representing $N$ in which $v$ is a balancing vertex.  
We may assume therefore that $\Omega - v$ is balanced.  
Consider the balancing equivalence classes in $\delta(v)$.
There cannot be just one b-class in $\delta(v)$, since then $e_1$ would not be contained in any circuit of $N$.  
If there are only two b-classes, then by Proposition \ref{prop:If_no_odd_theta} $\Omega$ is a signed graph.  
But then splitting $v$ yields a graph $H$ with $M(H) = N$ (Proposition \ref{prop:vertex_splitting_operation}), so $N$ is graphic, a contradiction.  
Hence there are at least three b-classes in $\delta(v)$.  
\begin{claim} 
$\Om$ contains a $\Cc_4$ configuration.  
\end{claim}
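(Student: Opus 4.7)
The plan is to exhibit a minor of $\Om$ that realises the configuration $\Cc_4$. The crucial leverage is that the three b-classes at $v$ force many cycles through $v$ to be unbalanced: any cycle at $v$ using two edges from distinct b-classes must be unbalanced (by the equivalence-relation property of $\sim$ on $\delta(v)$ established earlier). I would first pick representatives $f_1 \in A_1$, $f_2 \in A_2$, $f_3 \in A_3$ from three distinct b-classes of $\delta(v)$, with respective other endpoints $x_1, x_2, x_3 \in V(\Om) \setminus \{v\}$.

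The next step is to reduce $\Om - v$, which is connected (by the 2-connectivity of $\Om$) and balanced (by the assumption of subcase (i)), to a small backbone that still contains the edge $e_2 = uw$ and reaches each of $x_1, x_2, x_3$. In a connected balanced graph a Steiner-type subgraph connecting $\{x_1, x_2, x_3, u, w\}$ through $e_2$ certainly exists; one then deletes the remaining edges of $\Om - v$ and contracts edges inside the backbone to shrink it as far as possible. Because $\Om - v$ is balanced, none of these deletions or contractions can create new unbalanced cycles, and cycles through $v$ retain their biases since each is determined by the b-classes of its two $v$-edges.

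What remains is a biased graph on a small vertex set containing $v$, with $e_1$ an unbalanced loop at $v$, the three edges $f_1, f_2, f_3$ leaving $v$ into the reduced remnant of $\Om - v$, and $e_2$ still a link in that remnant. By the b-class choice, every cycle at $v$ using two different $f_i$ is unbalanced. A routine check then shows that the underlying graph of this minor matches the graph of $\Cc_4$ and that the cycles flagged as unbalanced by $\Cc_4$ are indeed unbalanced in the minor (depending on how many cycles not through $v$ remain balanced, we land in one of the three isomorphism classes $\Om_4, \Om_5, \Om_6$, exactly as in the proof of Lemma~\ref{lem:cconfig}).

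The main obstacle is the case analysis in the simplification step: different positions of $u, w$ relative to $x_1, x_2, x_3$ (for example $u = x_1$, or $\{u, w\} \cap \{x_1, x_2, x_3\} = \emptyset$, or $x_1 = x_2$) yield different initial backbones, and one must take care not to over-contract and collapse $e_2$ into a loop, nor to identify any $f_i$ with $e_2$, nor to produce a balanced component whose deletion would violate condition~(C2). Handling these subcases uniformly, and ensuring that no $f_i$ is unintentionally placed in parallel or series with $e_2$ in a way that destroys the intended configuration, is where the bulk of the bookkeeping will lie.
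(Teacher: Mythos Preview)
Your overall strategy---use three distinct b-classes at $v$ to force unbalanced cycles and then contract down---is on the right track, but the proposal has a genuine gap precisely where you yourself flag it. Committing to three specific edges $f_1,f_2,f_3$ at the outset is the wrong move: nothing prevents $x_1=x_2=x_3$, in which case your ``Steiner-type subgraph'' degenerates and does not yield enough edges to realise $\Cc_4$. More generally, the shape of the backbone connecting $\{x_1,x_2,x_3,u,w\}$ is not controlled, so the promised ``routine check'' that the contracted graph is $\Cc_4$ is exactly the whole proof, and you have not supplied it. You also use (C1) and (C2) only defensively (as things not to violate during contraction), whereas they must be used constructively to produce the extra path when the naive picture falls short.

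The paper avoids this case morass entirely by building an auxiliary graph: split $v$ into one vertex $x_i$ per b-class $S_i$, add a dummy vertex $y$ adjacent to all neighbours of $u$ and $w$, and then apply Menger's theorem to find three vertex-disjoint paths from $\{x_1,\ldots,x_t\}$ to $\{u,w,y\}$. The hypothetical 2-cut blocking these paths is ruled out directly by (C1) and (C2). This yields, back in $\Omega$, three internally disjoint $v$--$\{u,w\}$ paths whose first edges lie in distinct b-classes---so the disjointness and the b-class separation come for free, with no ad hoc choice of $f_i$. A short residual case split (depending on whether $P_1\cup P_2$ and $P_3$ are joined by a path avoiding $\{u,v\}$) then produces $\Cc_4$. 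The Menger-on-auxiliary-graph step is the missing idea in your sketch; once you have it, the contraction is genuinely routine.
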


\begin{proof}[Proof of claim] 
Construct an auxiliary graph $G$ from the underlying graph of $\Omega - e_1$, as follows.  
Let $\{ S_1, \ldots, S_t \}$ be the partition of $\delta(v)$ into its b-classes.  
Add a set of new vertices $X = \{ x_1, \ldots, x_t \}$, and, for each $i \in \{1, \ldots, t\}$, redefine the endpoints of each edge $f{=}xv \in S_i$ so that $f$ has endpoints $x, x_i$.  
Add a new vertex $y$ to $G$ that is adjacent to every vertex which is a neighbour of either $u$ or $w$.  

We claim that $G$ contains three vertex disjoint paths between $X$ and $\{u,w,y\}$.  
For if not, then by Menger's Theorem there exists a pair of subgraphs $G_1,G_2 \subseteq G$ whose edges partition $E(G)$ so that  $X \subseteq V(G_1)$ and $\{u,w,y\} \subseteq V(G_2)$ and $V(G_1) \cap V(G_2) = Z$ with $|Z| = 2$.  
If $Z$ contains at most one vertex of $X$ then the subgraph of $\Omega$ induced by $E(G_2)$ is balanced, contradicting (C2).  
Hence $Z$ contains two vertices of $X$.  
But this implies $v$ is a cut vertex of $\Omega$, contradicting (C1).  
This establishes the existence of our paths.  

So we may now assume that in $\Om$ there exist three internally disjoint paths, $P_1$ and $P_2$ from $v$ to $u$ and $P_3$ from $v$ to $w$ such that the three edges of these paths in $\delta(v)$ are in distinct b-classes.  
If there exists a path $Q$ from $P_1 \cup P_2$ to $P_3$ which is disjoint from $\{u,v\}$, then a minor of $P_1 \cup P_2 \cup P_3 \cup Q \cup \{e_1,e_2\}$ contains a $\Cc_4$ configuration.   

If there is no such path $Q$, then there is a partition $(A,B)$ of $E(\Omega)$ with $V(A) \cap V(B) = \{u,v\}$, $P_1, P_2 \subseteq \Omega[A]$ and $P_3 \subseteq \Omega[B]$.  
Choose such a partition with $B$ minimal.  
By (C2), $\Omega[B]$ contains two edges from $\delta(v)$ in distinct equivalence classes, and by our choice of $B$, neither of these edges is incident with $u$.  
Also by our choice of $B$, the subgraph $\Omega[B] - \{u,v\}$ is connected.  
It follows that $\Omega$ contains a $\Cc_4$ configuration.  
\end{proof}
This completes the proof in subcase 1(i).

\paragraph{Subcase (ii) \ $\Omega - v$ has an unbalanced cycle of length $>1$ but none containing $e_2$} \mbox{} 

Since two vertex disjoint paths linking the endpoints of $e_2$ and an unbalanced cycle would, by the theta property, yield an unbalanced cycle containing $e_2$, in this case $\Om-v$ is not 2-connected.  
We investigate the block structure of $\Om-v$ to show that $\Om$ contains a bad configuration.  

Suppose $\Psi$ is a leaf block of $\Omega - v$, containing cut-vertex $x$.  
By (C1) there is at least one edge between $v$ and $\Psi - x$.  
By (C2), either $\Psi$ is unbalanced or there exists an unbalanced cycle $C$ containing $v$ with length $>1$ with $C - v \subseteq \Psi$.  
With the goal of finding a bad configuration in mind, edges of $\Psi$ may be deleted or contracted to yield, in the former case, an unbalanced loop at $x$ and a link $vx$, or in the latter case, two $vx$ links forming an unbalanced cycle.  

Let $\Phi$ be the block of $\Om-v$ containing $e_2$.  
Suppose first that $\Phi$ is not a leaf block of $\Omega - v$.  
Then 
$\Phi$ contains two distinct cut-vertices $x, x'$.  
Choose a path in this block linking $x$ and $x'$ and containing $e_2$.  
Applying the argument of the previous paragraph to two leaf blocks of $\Om-v$, we find 
that $\Omega$ contains one of the configurations $\Cc_4$, $\Cc_4'$, or $\Cc_4''$.  

So suppose now that the block $\Phi$ of $\Om-v$ is a leaf block.  
After deleting unbalanced loops $\Phi$ is balanced, else $\Phi$ (and so $\Om-v$) would contain an unbalanced cycle containing $e_2$.  
Let $x$ be the cut vertex of $\Omega - v$ contained in $\Phi$, and let $S$ be the set of edges in $\delta(v)$ incident with a vertex of $\Phi - x$.  
Consider the biased graph $\Phi'$ obtained from $\Phi$ by deleting its unbalanced loops and adding vertex $v$ together with the edges in $S$.  
Vertex $v$ is a balancing vertex of $\Phi'$; let $\{S_1, \ldots, S_t\}$ be the partition of $S$ into the b-classes of $\delta(v)$ in $\Phi'$.  
Let $S_0$ be the set of loops in $\Phi$ not incident to $x$.  

Now construct an auxiliary graph similar to that appearing in subcase 1(i).  
Let $G$ be the graph obtained from $\Phi$ by adding vertices $x_0, x_1, \ldots, x_t$, and for $1 \leq i \leq t$ and every edge $zv \in S_i$ add an edge $z x_i$; for each unbalanced loop incident to a vertex $z$ add an edge $z x_0$.  
Finally, add a vertex $y$ that is adjacent to each vertex which is a neighbour of either $u$ or $w$.  
We claim that in $G$ there exist three vertex disjoint paths linking $\{x, x_0, \ldots, x_t\}$ to $\{u,w,y\}$.  
For suppose otherwise.  
Then by Menger's Theorem there exists a pair of subgraphs $G_1, G_2 \subseteq G$ whose edges partition $E(G)$ with $\{x, x_0, \ldots, x_t\} \subseteq V(G_1)$ and $\{u, w, y\} \subseteq V(G_2)$ and $|V(G_1) \cap V(G_2)| = 2$.  Let $Z = V(G_1) \cap V(G_2)$.  
If both vertices in $Z$ are in $\{x_0, x_1, \ldots, x_t\}$, then $\Omega - v$ would have no path linking $x$ and $u$, contradicting the fact that $\Phi$ is a block of $\Om-v$.  
Now either $x_0 \not\in Z$ or $x_0 \in Z$.  
If $x_0 \notin Z$, then in $\Om$ the biased subgraph induced by $E(G_2 - y)$ is a balanced subgraph meeting the rest of $\Om$ in just two vertices, contradicting (C2).  
But if $x_0 \in Z$, then the biased subgraph induced by $E(G_2 - y)$ meets the rest of $\Om$ in just one vertex, contradicting (C1).  
Hence the paths exist as claimed.  

We may assume that one of these three paths begins at vertex $x$ (otherwise choose a path from $x$ to $\{u,w,y\}$ modify a path appropriately).  
In $\Omega$ this gives us three internally disjoint paths $P_1, P_2, P_3 \subseteq \Phi'$ such that:  
\begin{enumerate}
\item $P_1,P_2$ start at $v$ or at a vertex of $\Phi$ incident with an unbalanced loop and end at $\{u,w\}$.  
\item at least one of $P_1,P_2$ starts at $v$, and if both start at $v$ their first edges are in distinct blancing classes.  
\item $P_3$ starts at $x$ and ends at $\{u,w\}$.  
\item at least one of $P_1, P_2, P_3$ ends at $u$ and one at $w$.
\end{enumerate}
Choose an unbalanced cycle $C$ of length $>1$ in $\Omega - v$ and choose two vertex disjoint paths $R, R'$ linking $C$ and $\{v,x\}$.  
Note that $C$ is not contained in $\Phi$ (as $\Phi$ without its unbalanced loops is balanced), and so $R, R'$ meet $\Phi$ only at $x$.  
First suppose that both $P_1$ and $P_2$ end at $u$ or both end at $w$.  
Consider the subgraph $H$ consisting of $C \cup R \cup R'$ together with $P_1 \cup P_2 \cup P_3$ and the edges $e_1, e_2$.  
If both $P_1, P_2$ begin at $v$ then $H$ contains a $\Cc_4$ configuration.  
Otherwise, one of these paths begins at a vertex incident with an unbalanced loop $f$.  
Adding $f$ to subgraph $H$, we find that $H$ contains $\Cc_4'$ configuration.  
So now suppose that $P_1$ ends at $u$ while both $P_2$ and $P_3$ end at $w$.  
Since $\Phi$ is a block of $\Om-v$, $\Phi - w$ contains a path $Q$ from $P_1 - v$ to $P_3 - \{v,w\}$.  
If $Q$ contains a vertex in $P_2$, then again the subgraph $H$ consisting of $C \cup R \cup R'$ together with $P_1 \cup P_2 \cup P_3 \cup \{e_1, e_2\}$ and possibly an unbalanced loop incident to an end of $P_1$ or $P_2$, contains either a $\Cc_4$ or $\Cc_4'$ configuration.  
Otherwise, $H$ contains either configuration $\Cc_5$ (if one of $P_1$ or $P_2$ does not begin at $v$ but is incident to an unbalanced loop) or $\Cc_8$ (if both $P_1$ and $P_2$ begin at $v$).

\paragraph{Subcase (iii) \ $\Omega - v$ has an unbalanced cycle containing $e_2$} \mbox{}

Let $C$ be an unbalanced cycle containing $e_2$.  
Choose two paths $P_1$, $P_2$ linking $v$ and $C$, disjoint except at $v$, say meeting $C$ at vertices $x_1, x_2$, respectively.  
Let $R$ be the $x_1$-$x_2$ path in $C$ containing $e_2$; let $R'$ be the $x_1$-$x_2$ path in $C$ avoiding $e_2$.  
If the cycle $P_1 \cup P_2 \cup R$ is unbalanced, then $\Om$ contains configuration $C_1$.  
So let us now assume that this does not occur for any unbalanced cycle containing $e_2$ | \ie, for every unbalanced cycle $C$ of $\Omega - v$ containing $e_2$ and every such pair $P_1, P_2$ of $v$-$C$ paths meeting only at $v$, the cycle formed by $P_1 \cup P_2$ and the path $R$ in $C$ traversing $e_2$ is balanced.  
Choose such subgraphs $C$, $P_1$ and $P_2$, with $P_1$ meeting $C$ at $x_1$ and $P_2$ meeting $C$ at $x_2$, so that the length of the path $R'$ in $C$ avoiding $e_2$ is minimum.  

Suppose $R'$ does not consist of a single edge.  
First suppose also that there exists a separation $(\Omega_1, \Omega_2)$ of $\Omega$ with $V(\Omega_1) \cap V(\Omega_2) = \{x_1,x_2\}$ with $R' \subseteq \Omega_1$ and $P_1 \cup P_2 \cup R \subseteq \Omega_2$.  
By choosing such a separation with $\Omega_1$ minimal, we may further assume that $\Omega_1 - \{x_1,x_2\}$ is connected and that there are no $x_1 x_2$ edges in $\Om_1$.  
By (C2), $\Omega_1$ is not balanced.  
If there is an unbalanced cycle in $\Omega_1 - x_1$, then $\Om$ contains a $\Cc_2$ configuration.  
Otherwise $x_1$ is a balancing vertex in $\Omega_1$.  
Since $\Om_1$ contains no $x_1$-$x_2$ edge 
and $\Omega_1 - \{x_1, x_2\}$ is connected, there is then an unbalanced cycle in $\Omega_1 - x_2$; again we find a $\Cc_2$ configuration.  
So now assume that no such separation exists: there is a path $Q$ from the interior of $R'$ to $(P_1 \cup P_2 \cup R) \setminus \{x_1, x_2\}$.  
If $Q$ first meets $P_1 \cup P_2 \setminus \{x_1, x_2\}$, then we find our choice of $P_1$ and $P_2$ did not minimise $R'$, a contradiction.  
Hence $Q$ avoids $(P_1 \cup P_2) \setminus \{x_1, x_2\}$ and meets $R$.  
Subgraph $Q \cup C$ is a theta. 
If the cycle in $Q \cup C$ containing $e_2$ different from $C$ is unbalanced, then again we did not choose $C$, $P_1$, and $P_2$ so as to minimise the length of $R'$, a contradiction.  
Therefore that cycle is balanced, and so the cycle $C'$ in $C \cup Q$ not containing $e_2$ is unbalanced.  
Choose an edge $e \in Q$.  
Contracting all edges of $C'$ but $e$, all but one edge of $R' \setminus C'$, all but edge $e_2$ of $R \setminus C'$, and all but one edge of each of $P_1$ and $P_2$, we find configuration $\Cc_2$.

So the path $R'$ must consist of a single $x_1 x_2$ edge $f$.  
Suppose first that $\{x_1,x_2\}$ does not separate $v$ from $C \setminus \{x_1,x_2\}$ and choose a path $Q$ from $(P_1 \cup P_2) \setminus \{x_1,x_2\}$ to $C \setminus \{x_1,x_2\}$.  
We claim that by the theta property, there exists a cycle in $P_1 \cup P_2 \cup Q \cup C$ containing both $e_2$ and $Q$ which is unbalanced, and in any case this yields a $\Cc_1$ configuration.  
To see this, recall that the cycle $P_1 \cup P_2 \cup C \setminus f$ is balanced.  
There are, up to symmetry and assuming $Q$ leaves from $P_1$, two cases to consider: (a) $Q$ is a $P_1$-$C$ path such that the cycle $D$ in $P_1 \cup Q \cup C$ containing $e_2$ and $Q$ contains $f$, or (b) does not contain $f$.  
In case (a), if $D$ is balanced, then the cycle in $P_1 \cup P_2 \cup (C \setminus f) \cup Q$ containing $Q$ and $e_2$ is unbalanced, and we find $\Cc_1$ contained in this cycle together with $C$ and $e_1$.  
If $D$ is unbalanced, then we find $\Cc_1$ in $P_1 \cup Q \cup C \cup \{e_1\}$.  
In case (b), if $D$ is balanced we find $\Cc_1$ by deleting the subpath of $P_1$ between $P_1 \cap Q$ and $x_1$.  
If $D$ is unbalanced, we find $\Cc_1$ in $P_1 \cup Q \cup C \cup \{e_1\}$.

Hence $\{x_1, x_2\}$ separates $v$ from $C$.  
Choose a separation $(\Omega_1,\Omega_2)$ of $\Omega$ with $V(\Omega_1) \cap V(\Omega_2) = \{x_1,x_2\}$ for which $C \subseteq \Omega_2$ and $v \in \Omega_1$, with $\Om_1$ minimal.  
Then $\Omega_1 - \{x_1,x_2\}$ is connected and $\Omega_1$ has no $x_1 x_2$ edge.  
If $\Om_1$ contains an unbalanced cycle $C'$ of length $>1$, then choosing a pair of vertex disjoint paths $Q, Q'$ linking $C'$ and $\{x_1, x_2\}$ and an application of the theta property yield an unbalanced cycle containing $e_2$ that is not $C$.  
But then $C' \cup Q \cup Q' \cup C \cup \{e_1\}$ contains a $\Cc_1$ configuration.  
Hence $\Om_1$ contains no unbalanced cycle of length $>1$; suppose $\Om_1$ contains an unbalanced loop $e \not= e_1$, say incident to $v'$.  
Since $N$ is 3-connected, $v' \not= v$.  
Since $\Omega$ is 2-connected, there is a path $Q$ from $v'$ to $(P_1 \cup P_2) \setminus v$.  
But now in $C \cup P_1 \cup P_2 \cup Q \cup \{e\}$ we find configuration $\Cc_2$. 

So $\Omega_1 - e_1$ is balanced.  
Now suppose that $V(\Omega_2) = \{x_1, x_2\}$.  
If there is a loop in $\Omega_2$ we have a $\Cc_2$ configuration.  
If there are at least three edges in $\Omega_2$ we have a $\Cc_3$ configuration (no two such edges form a balanced cycle since $N$ is 3-connected).  
So in this case $\Omega_2$ consists only of the two edges $e_2$ and $f$ (which form unbalanced cycle $C$).  
Since $\Om_1 - e_1$ is balanced, an unbalanced cycle in $\Om - f$ containing $e_2$, together with the theta property, would yield a $\Cc_1$ configuration.   
Hence $\Om - \{e_1, f\}$ is balanced.  
But this implies that $e_1$ and $f$ are in series in $N$, a contradiction since $N$ is 3-connected.  
So $|V(\Omega_2)| \geq 3$.  

We now claim that $\Omega_2$ contains an unbalanced cycle that does not contain both $x_1$ and $x_2$.  
Let $\Psi_0$ be a component of $\Omega_2 - \{x_1,x_2\}$ and let $\Psi$ be the subgraph of $\Omega_2$ consisting of $\Psi_0$ together with all edges between $x_i$ and $V(\Psi_0)$, for $i \in \{1, 2\}$.  
By (C2), $\Psi$ is unbalanced.  
Moreover, we may assume $x_1$ is a balancing vertex in $\Omega_2$, since if not we have the desire cycle.  
Consider the b-classes of $\delta_{\Omega_2}(x_1)$.  
Since $\Psi$ is not balanced, there are two edges in $\Psi$ in distinct b-classes, and since $\Psi - x_2$ is connected, this yields an unbalanced cycle in $\Omega_2$ not containing $x_2$, as desired.  

Without loss of generality, choose an unbalanced cycle $D \subseteq \Omega_2$ that does not contain $x_1$.  
If $D$ and $C$ share at most one vertex, we see that $P_1 \cup P_2 \cup C \cup D$ contains a $\Cc_2$ configuration.  
So $|V(C) \cap V(D)| \geq 2$.  
Let $Q$ be the maximal subpath of $C$ which contains $e_2$ and has no interior vertex in the set $V(D) \cup \{x_1,x_2\}$.  
By assumption at least one end of $Q$ must be in $V(D)$.  
If both ends of $Q$ are in $V(D)$ then $\Omega_2$ contains an unbalanced cycle $D'$ containing $e_2$ but not $x_1$.  
There are two vertex disjoint paths linking $D'$ and $\{x_1, x_2\}$ and these, together with $P_1 \cup P_2 \cup \{e_1,f\}$, contain a $\Cc_6$ configuration.  
So finally assume (by possibly interchanging $x_1$ and $x_2$) that one end of $Q$ is $x_1$ and the other is in $V(D)$.  
The $D$-$x_2$ path in $C$ avoids $Q$; this path together with $D$, $Q$, $f$, $P_1$, $P_2$, and $e_1$ contains a $\Cc_7$ configuration.  

This completes the proof of Case 1.

\subsubsection{Case 2. $e_1$ and $e_2$ are adjacent}

As before, let $v$ be the endpoint of $e_1$.  
Let $u$ be the other endpoint of $e_2$.  
Let $T_0$ be the standard block-cutpoint graph of $\Omega - v$.  If $u$ is a cut vertex of $\Omega - v$ then set $T = T_0$.  
Otherwise, let $T$ be the tree obtained by adding vertex $u$ to $T_0$ together with an edge between $u$ and the unique block of $\Omega - v$ containing $u$.  
View tree $T$ as rooted at $u$.  
Every block $\Psi$ of $\Omega - v$ is a vertex of $T$ and there is a unique path in $T$ from $\Psi$ to $u$.  
The next vertex of $T$ on this path from $\Psi$ is a vertex of $\Omega$, the \emph{parent} of $\Psi$.  
Note that the parent of a block of $\Om - v$ is always either a cut vertex of $\Omega - v$ or is $u$.  

\begin{claim} 
If $x$ is the parent of a block $\Psi$ of $\Omega - v$, then one of the following holds:
\begin{enumerate}
\item $\Psi$ contains no unbalanced cycle of length $>1$.
\item $x$ is balancing in $\Psi$ and there are exactly two b-classes in $\delta_\Psi(x)$.  
\end{enumerate}
\end{claim}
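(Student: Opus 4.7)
The plan is to prove the contrapositive: assuming $\Psi$ contains an unbalanced cycle of length greater than $1$, I show that $x$ is balancing in $\Psi$ and $\delta_\Psi(x)$ has exactly two b-classes. I rely throughout on the standing hypothesis that $\Omega$ contains no bad configuration (so that Lemmas \ref{lem:cconfig} and \ref{lem:dconfig} cannot be invoked), together with the conditions (C1) and (C2). Since $x$ is the parent of $\Psi$, there is a path $Q$ in $\Omega - v$ from $x$ to $u$ meeting $V(\Psi)$ only at $x$ (trivial if $x = u$). Moreover, since $\Omega$ is 2-connected and $v \notin V(\Psi)$, there must be an edge in $\delta(v)$ with an endpoint in $V(\Psi) \setminus \{x\}$.

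For the first assertion, suppose for contradiction that $x$ is not balancing in $\Psi$. After unrolling any unbalanced loops in $\Psi - x$ (which by the roll-up operation produces another $\{e_1\}$-biased representation of $N$), I may assume $\Psi - x$ contains an unbalanced cycle $C'$ of length greater than $1$. Since $\Psi$ is a block of $\Omega - v$, by 2-connectivity there are two internally disjoint paths $R_1, R_2$ in $\Psi$ from $x$ to distinct vertices of $C'$, yielding a theta subgraph $T_0 = C' \cup R_1 \cup R_2$. By the theta property, either $T_0$ is contrabalanced or $C'$ is its unique unbalanced cycle. Combining $T_0$ with the edge from $v$ into $V(\Psi) \setminus \{x\}$, the path $Q$, and $e_1, e_2$, then performing suitable contractions and deletions, I would produce a minor realising one of $\Cc_1$, $\Cc_2$, $\Cc_3$, or $\Cc_4$ (the specific configuration depending on the attachment pattern and the bias pattern within $T_0$), contradicting the standing hypothesis.

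For the second assertion, first note that since $\Psi$ contains an unbalanced cycle of length greater than $1$ and $x$ now balances $\Psi$, that cycle must pass through $x$ with its two incident edges at $x$ lying in distinct b-classes of $\delta_\Psi(x)$; hence there are at least two b-classes. Suppose for contradiction that there are at least three. Choose edges $g_1, g_2, g_3 \in \delta_\Psi(x)$ from three distinct b-classes, with non-$x$ endpoints $z_1, z_2, z_3$. Arguing as in Subcase~1(i), I would construct an auxiliary graph from $\Omega - e_1$ by splitting $x$ into one vertex per b-class and adding a new vertex adjacent to all neighbours of $u$ (exploiting the path $Q$), then apply Menger's theorem to obtain three internally disjoint paths from the split copies of $x$ to the appropriate target vertices. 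These paths, together with $g_1, g_2, g_3$, the edge from $v$ into $V(\Psi) \setminus \{x\}$, and $e_1, e_2$, realise a $\Cc_4$ configuration, a contradiction.

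The main technical obstacle is the careful enumeration of sub-cases in the first assertion, which differ according to whether $x = u$ (so $Q$ is trivial), on which vertex of $V(\Psi) \setminus \{x\}$ receives the edge from $v$, and on which of the three cycles of $T_0$ are unbalanced under the theta property. Each sub-case demands exhibiting an explicit sequence of minor operations producing a specific named bad configuration on the nose, rather than merely a vaguely similar structure; this case-chasing directly parallels the work already carried out in Subcases~1(i)--(iii), where analogous theta-plus-paths data had to be matched against the full catalogue $\Cc_1, \ldots, \Cc_8$.
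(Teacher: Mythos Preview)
Your proposal has a genuine gap that makes the approach unworkable as stated.  You are in Case~2, where $e_1$ is an unbalanced loop at $v$ and $e_2$ is a link $uv$, so $e_1$ and $e_2$ are \emph{adjacent}.  The configurations $\Cc_1,\ldots,\Cc_8$ were built for Case~1 (where $e_1$ and $e_2$ are non-adjacent); in each of them $e_2$ is a link whose endpoints are distinct from the loop vertex of $e_1$.  You cannot produce a $\Cc_4$ minor here, because after any sequence of deletions and contractions preserving both $e_1$ and $e_2$, the edge $e_2$ will still be incident with the vertex carrying $e_1$.  The paper instead uses the $\Dd$ configurations, which are designed exactly for this adjacency pattern: the unbalanced theta (or three-b-class fan) in $\Psi$, together with an $x$--$u$ path in $\Omega-v$, the edge $e_2=uv$, and the loop $e_1$, directly realises $\Dd_2$ (respectively $\Dd_2'$).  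No edge from $v$ into $\Psi-x$ is needed, and no Menger argument is required.

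A second, independent error is your unrolling step.  The unrolling operation from Section~\ref{sec:Preliminaries} requires that, after deleting the unbalanced loops in question, some vertex is balancing; it then re-attaches each loop as a link to that vertex.  You invoke it precisely under the hypothesis that $x$ is \emph{not} balancing in $\Psi$, and you have no candidate vertex to unroll to.  The paper avoids this entirely by first stripping loops to form $\Psi'$, showing $x$ is balancing in $\Psi'$ with two b-classes, and only then handling a stray unbalanced loop (not at $x$) via $\Dd_2$ again.  The resulting proof is a few lines; your proposed case enumeration across $\Cc_1$--$\Cc_4$ is both unnecessary and, for the adjacency reason above, impossible to complete.
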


\begin{proof}[Proof of Claim] 
Let $\Psi'$ be the graph obtained from $\Psi$ by deleting all loops.  
If $\Psi'$ is balanced, (1) holds.  
Otherwise, suppose $x$ is not a balancing vertex of $\Psi'$ and choose an unbalanced cycle $C$ of $\Psi' - x$ and two paths $P_1, P_2$ from $x$ to $C$ that are disjoint except at $x$.  
Let $y_1, y_2$ be the respective ends of $P_1, P_2$ on $C$, and let $Q, Q'$ be the two paths in $C$ meeting just at $y_1$ and $y_2$.  
By the theta property, one of $P_1 \cup P_2 \cup Q$ or $P_1 \cup P_2 \cup Q'$ is unbalanced.  
Hence $P_1 \cup P_2 \cup C$ together with an $x$-$u$ path, $e_2$, and $e_1$, contains a $\Dd_2$ configuration.

So $x$ is balancing in $\Psi'$.  
If $\delta_\Psi(x)$ contains three b-classes, $\Om$ contains a $\Dd_2'$ configuration.  
Hence there are exactly two b-classes in $\delta_\Psi(x)$.  
If $\Psi$ contains an unbalanced loop not at vertex $x$, then an unbalanced cycle in $\Psi'$, together with this loop, an $x$-$u$ path, $e_2$, and $e_1$, contains a $\Dd_2$ configuration.  
\end{proof}

Call a block of $\Om-v$ as described in statement (1) of our claim a \emph{type 1} block, and a block as in statement (2), a \emph{type 2} block.  

\begin{claim} 
Every type 2 block of $\Om-v$ is a leaf of $T$. 
\end{claim}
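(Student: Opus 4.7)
The plan is to suppose for contradiction that $\Psi$ is a type 2 block of $\Om-v$ with parent $x$ that is not a leaf of $T$, and to produce a minor of $(N,\{e_1,e_2\})$ isomorphic to one of $\Mm_0$, $\Mm_1$, or $\Mm_7$, contradicting the assumption that $(N,L)$ is an excluded minor not isomorphic to any of $\Mm_1, \ldots, \Mm_8$. Under this hypothesis $\Psi$ has a child in $T$, namely a cut vertex $y$ of $\Om-v$ lying in $V(\Psi)\setminus\{x\}$. Let $A$ consist of $y$ together with the vertex sets of all blocks of $\Om-v$ that are descendants of $y$ in $T$, and let $E_A = \{f \in E(\Om) : f \text{ has an endpoint in } A \setminus \{y\}\}$. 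Then $V(E_A) \cap V(E(\Om)\setminus E_A) \subseteq \{y,v\}$, and 2-connectedness of $\Om$ ensures both $|E_A| \geq 2$ and $|E(\Om) \setminus E_A| \geq 2$.

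The first step is to locate an unbalanced cycle $C_y \subseteq E_A$: if $\Om[E_A]$ were balanced then (C2), applied to the separation $(E_A, E(\Om) \setminus E_A)$ of $N$, would force $|V(E_A) \cap V(E(\Om) \setminus E_A)| \geq 3$, a contradiction. The second step is to produce an unbalanced cycle $C_\Psi$ inside $\Psi$ that meets $y$, using that $\Psi$ has two b-classes $S_1, S_2$ at $x$, that $\Psi - x$ is balanced and connected, and that $\Psi$ is 2-connected so there are two internally disjoint $x$-$y$ paths: if those paths start at $x$ with edges from different b-classes, they form the required cycle; otherwise I splice in an edge of the missing b-class together with a path in $\Psi - x$ to realise such a cycle. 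The third step is to combine $C_\Psi$ and $C_y$ with the $x$-$u$ path $R$ in $\Om - v$ read off from $T$, a $y$-to-$C_y$ path inside $E_A$, the unbalanced loop $e_1$ at $v$, and the edge $e_2 = vu$; contracting path interiors and deleting superfluous edges, I obtain a minor of $\Om$ that realises one of the bad configurations $\Dd_1$, $\Dd_2$, $\Dd_2'$, or $\Dd_3$. Lemma~\ref{lem:dconfig} then yields the desired forbidden minor of $(N,\{e_1,e_2\})$.

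The hard part will be the second step in the subcase where $\{x,y\}$ is a 2-separator of $\Psi$: there the two internally disjoint $x$-$y$ paths may be forced to start at $x$ with edges from the same b-class, and producing an unbalanced cycle that uses both $x$ and $y$ requires examining how $S_1$ and $S_2$ are distributed across the two sides of the $\{x,y\}$-separator and, when necessary, replacing the cycle-through-$y$ by an unbalanced cycle through $x$ alone together with a short $x$-$y$ path in $\Psi$. A secondary care is needed in the third step to verify, in each resulting subcase, that after the contractions enough unbalanced structure survives (by the theta property) to realise a $\Dd$-configuration rather than a balanced reduction.
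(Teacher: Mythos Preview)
Your approach is correct and follows the same skeleton as the paper's: locate an unbalanced cycle in $\Psi$, locate an unbalanced cycle ``below'' $\Psi$, and string these together with an $x$--$u$ path and $e_1,e_2$ to obtain a $\Dd$-configuration.  Two simplifications the paper makes are worth noting.  First, you do not need $C_\Psi$ to pass through $y$: since $x$ is balancing in $\Psi$, \emph{any} unbalanced cycle $C'\subseteq\Psi$ contains $x$, and because $\Psi$ is 2-connected there is a $y$--$(C'-x)$ path in $\Psi-x$; this path plus $C'$ is all that is needed, so your ``hard part'' about $\{x,y\}$ possibly being a 2-separator of $\Psi$ dissolves entirely.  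Second, rather than taking $C_y$ anywhere in $E_A$, the paper passes directly to a \emph{leaf} block $\Phi$ descending from $\Psi$: either $\Phi$ already contains an unbalanced cycle (which, after contraction to a loop and combined with $C'$, yields $\Dd_2$), or $\Phi$ is balanced, whence (C2) applied to $\Phi$ together with $v$ and the $v$--$(\Phi-y)$ edges forces an unbalanced cycle through $v$, which combined with $C'$ yields $\Dd_3$.  This leaf-block choice collapses your case analysis in step~3 to exactly two clean cases.
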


\begin{proof}[Proof of Claim] 
Suppose there exists a type 2 block $\Psi$ of $\Omega - v$ that is not a leaf of $T$.  
Let $\Phi$ be a leaf block of $\Omega - v$ with parent $y$ such that the unique path in $T$ from $\Phi$ to $u$ contains $\Psi$.  
If $\Phi$ contains an unbalanced cycle, then $\Om$ contains a $\Dd_2$ configuration.  
So $\Phi$ is balanced.  
Let $\Phi^+$ be the biased subgraph of $\Om$ given by $\Phi$ together with $v$ and all edges between $v$ and $\Phi - y$.  
By (C2), $\Phi^+$ is unbalanced, so there is an unbalanced cycle $C$ in $\Phi^+$ containing $v$.  
Together with a $C$-$y$ path in $\Phi$, an unbalanced cycle $C'$ in $\Psi$, a $y$-$(C'-x)$ path and an $x$-$u$ path in $\Om-v$, we have a biased graph containing a $\Dd_3$ configuration.  
\end{proof}

Along with the structure we have determined of $\Omega - v$ comes knowledge of the biases of all cycles of $\Om-v$.  
We wish to extend this knowledge to $\Omega$.  

Let $\Om_0$ be the balanced biased subgraph of $\Om$ consisting of each type 1 block of $\Om-v$ without its unbalanced loops.  
By our second claim, $\Om_0$ is a connected balanced biased subgraph of $\Om-v$.  
Let $\Psi_1, \ldots, \Psi_m$ be the type 2 blocks of $\Omega - v$.  
For each $\Psi_i$, let $x_i$ be its parent vertex in $T$, and define $\Omega_i$ to be the subgraph of $\Omega$ consisting of $\Psi_i$ together with $v$ and all edges between $v$ and $\Psi_i - x_i$.  
Let $U$ be the set of all loops in $\Omega - v$.  
The subgraphs $E(\Omega_0), E(\Omega_1), \ldots, E(\Omega_m)$ are edge disjoint and together contain all edges in $E(\Omega)$ except for loops and some edges incident to $v$ (see Figure \ref{fig:flips_both_reps2}, at left).  

For every $1 \le i \le m$, vertex $x_i$ is balancing in $\Psi_i$; let $\{ A_i, B_i \}$ be the partition of $\delta_{\Psi_i}(x_i)$ into its two b-classes.  
Suppose $\Omega_i$ contains an unbalanced cycle $C$ disjoint from $x_i$.  
Choose two internally disjoint paths $P_1, P_2$ linking $x_i$ and $C$ for which $E(P_1) \cap A_i \not= \emptyset$ and $E(P_2) \cap B_i \not= \emptyset$.  
Then $C \cup P_1 \cup P_2 \cup \{e_1, e_2\}$ together with an $x_i$-$u$ path in $\Om - v$ contains a $\Dd_3$ configuration.  
Hence every $\Omega_i$ has $x_i$ as a balancing vertex.  
By Lemma \ref{lem:rerouting_along_a_bal_cycle} the b-classes in each $\delta_{\Om_i}(x_i)$ are $\{A_i, B_i \}$.  

Consider two edges $f, f' \in A_i$ or $f, f' \in B_i$ for some $1 \leq i \leq m$.  
Let $C$ (resp.\ $C'$) be a cycle containing $e_2$ and $f$ (resp.\ $f'$).  
The path $C- e_2$ ($C'-e_2$) is the union of a $u$-$x_i$ path $P$ $(P')$ and an $x_i$-$v$ path $Q$ $(Q')$.  
Applying Lemma \ref{lem:rerouting_along_a_bal_cycle} separately to $P \cup P'$ and $Q \cup Q'$, we conclude that $C$ and $C'$ have the same bias.  
Now suppose that for some $1 \leq i \leq m$, there is an unbalanced cycle containing $e_2$ and an edge in $A_i$ and another unbalanced cycle containing $e_2$ and an edge in $B_i$.  
Choose a cycle $C \subseteq \Psi_i$ that contains one edge in each of $A_i$ and $B_i$, a path $P$ in $\Omega_i$ from $v$ to $C-x_i$, and a $u$-$x_i$ path $Q$ in $\Omega_0$.  
It now follows that $P \cup Q \cup C \cup \{e_1,e_2\}$ contains a $\Dd_1$ configuration.  
Hence two such unbalanced cycles do not exist, and by possibly interchanging the names assigned to the sets $A_i, B_i$, we may assume that for every $1 \leq i \leq m$, every cycle in $\Omega$ containing $e_2$ and an edge of $A_i$ is balanced.  
By the theta property then, for every $1 \leq i < j \leq m$, every cycle of $\Omega$ containing an edge in $A_i$ and an edge in $A_j$ is balanced.

We now define a signature for $\Om$ that realises $\Bb$.  
We use a simpler biased graph $\Om'$ to model the biases of cycles in $\Om$ to do so.  
Let $\Om'$ be the biased graph obtained from $\Om$ as follows.   
For every $1 \leq i \leq m$ replace $\Omega_i$ with two edges $a_i, b_i$ with endpoints $x_i$ and $v$, with $a_i \in A_i$ and $b_i \in B_i$, and let the bias of each cycle of $\Om'$ be inherited from a corresponding cycle in $\Om$ in the obvious way.  
Now $\Omega' - v$ has no unbalanced cycle of length $> 1$, so by Observation \ref{obs:relabelling_when_there_is_a_balancing_vertex}, $\Om'$ is a $k$-signed graph.  
Moreover, by Observation \ref{obs:relabelling_when_there_is_a_balancing_vertex} there is a signature $\mathbf{\Sigma'} = \{U, \Sigma_1', \ldots, \Sigma_k'\}$ that realises the biases of cycles of $\Omega'$, where each set $\Sigma_j' \subseteq \delta_{\Om'}(v)$ and $U$ is the set of unbalanced loops of $\Om'$.  
Further, Observation \ref{obs:relabelling_when_there_is_a_balancing_vertex} allows us to assume that $e_2$ is not a member of any set in the signature $\mathbf{\Sigma'}$.  
Since for every $1 \leq i \leq m$, $e_2$ and $a_i$ are in the same b-class, none of the edges $a_i$ is in a member of the signature.  
Hence every $b_i$ is contained in some member $\Sigma_j'$ of $\mathbf{\Sigma'}$.  
Define a signature $\mathbf{\Sigma} = \{U, \Sigma_1, \ldots, \Sigma_k\}$ for $\Om$ as follows.  
For every $1 \le i \le m$, if $b_i \in \Sigma_j'$ put all edges in $B_i$ in $\Sigma_j$.  
If $e {=} vz \in \Sigma_j'$ is a edge incident to $v$ and a vertex $z \in \Omega_0$, put $e$ in $\Sigma_j$.  
The structural description we have of $\Om$ and the biases of its cycles implies $\Bb_{\mathbf{\Sigma}}=\Bb$.  
By Theorem \ref{prop:flip_operation}, the biased graph $\Gamma$ obtained by performing a twisted flip on $\Om$ has $F(\Gamma) \iso F(\Om)$ (Figure \ref{fig:flips_both_reps2}, at right).  
But in $\Gamma$ both $e_1$ and $e_2$ are represented as unbalanced loops, so $(N,L)$ is frame, a contradiction.  
\end{proof}

\begin{figure}[htbp] 
\begin{center} 
\includegraphics[scale=0.8]{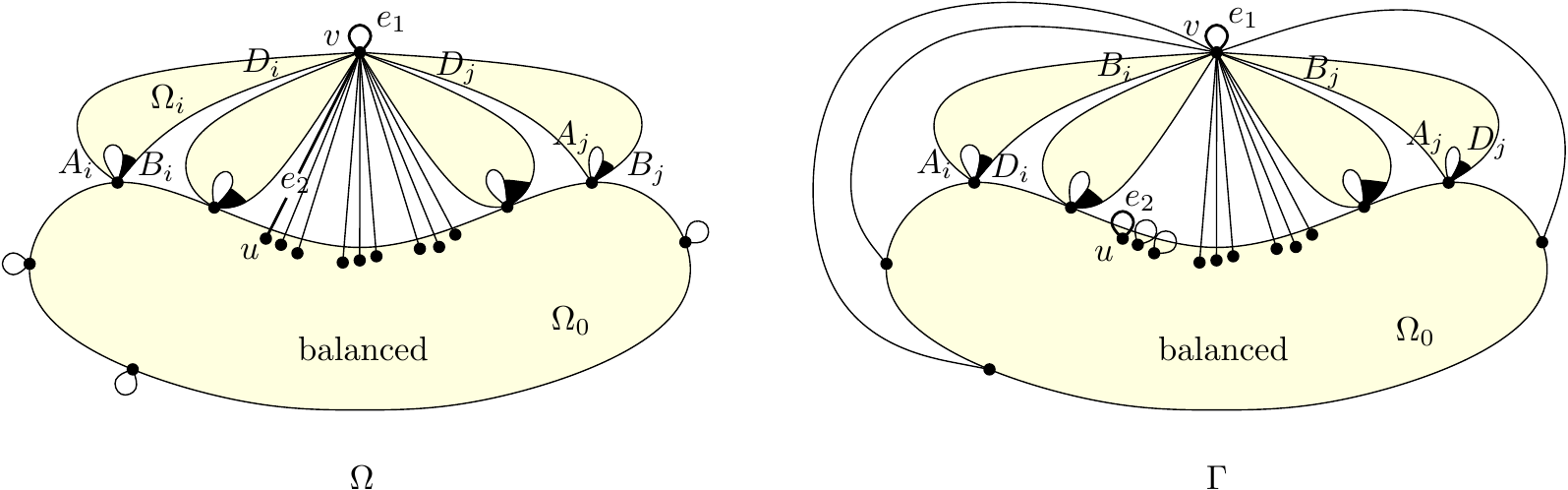}
\end{center} 
\caption{A twisted flip: $F(\Om) \iso F(\Gamma)$.}
\label{fig:flips_both_reps2} 
\end{figure}

\bibliography{Matroids1.bib} 
\bibliographystyle{plain} 

\end{document}